\newtheorem{theorem}{Theorem}[section]
\newtheorem{corollary}{Corollary}[section]
\newtheorem{lemma}{Lemma}[section]
\newtheorem{proposition}{Proposition}[section]
\theoremstyle{definition}
\newtheorem{definition}{Definition}[section]
\theoremstyle{remarq}
\theoremstyle{remark}
\newtheorem{remark}{Remark}[section]
\theoremstyle{nota}
\theoremstyle{notation}
\theoremstyle{exemple}
\theoremstyle{example}
\numberwithin{equation}{section}
\def\XXint#1#2#3{{\setbox0=\hbox{$#1{#2#3}{int}$}  
		\vcenter{\hbox{$#2#3$}}\kern-.5\wd0}}
\begin{document}
	\title[Stochastic-Periodic Homogenization in Orlicz-Sobolev Spaces]{Stochastic-Periodic Homogenization of a Class of Minimization Problems in Orlicz-Sobolev Spaces}
	
		\author{Fotso Tachago Joel}
	\address{J. Fotso Tachago, University of Bamenda, 	Higher Teachers Training College, Department of Mathematics
		P.O. Box 39
		Bambili, Cameroon}
	\email{fotsotachago@yahoo.fr}
	
	\author{Dongho Joseph}
	\address{J. Dongho, University of Maroua, Department of Mathematics and Computer Science, P.O. box 814, Maroua, Cameroon}
	\email{joseph.dongho@fs.univ-maroua.cm}

	\author{Tchinda Takougoum Franck}
	\address{F. Tchinda Takougoum, University of Maroua, Department of Mathematics and Computer Science, P.O. box 814, Maroua, Cameroon}
	\email{takougoumfranckarnold@gmail.com}
	
	\date{Otober, 2025}
	\subjclass[2020]{35B27, 35B40, 49J55,  37A05, 46E30}
	\keywords{Homogenization, Stochastic-Periodic, Dynamical systems, Minimization Problems, Stochastic Two-scale Convergence,  Orlicz-Sobolev Spaces}
	
	\begin{abstract}
			We develop the stochastic two-scale convergence method in the framework of Orlicz-Sobolev spaces, in order to deal with the homogenization of coupled stochastic-periodic problems in such spaces. One fundamental in this topic is the extension of compactness results for this method to the Orlicz setting. For the application, we show that the sequence of minimizers of a class of highly oscillatory minimizations problems involving integral functionals with convex and nonstandard growth integrands, converges to the minimizer of a homogenized problem.
	\end{abstract}
	
	\maketitle


\section{Introduction} \label{sect1} 

This paper is devoted to the  homogenization theory in the framework of Orlicz-Sobolev spaces, and more precisely to the \textit{stochastic two-scale convergence} method (see, e.g., \cite{sango}) in this type of spaces. So, we consider the following minimization problem
\begin{equation}\label{ras2}
	\min \left\{ F_{\epsilon}(v) : v \in W^{1}_{0}L^{\Phi}_{D_{x}}(Q; L^{\Phi}(\Omega)) \right\},
\end{equation} 
where, for each $\epsilon>0$ the functional $F_{\epsilon}$ is defined by 
\begin{eqnarray}\label{ras1}
	F_{\epsilon}(v) = \iint_{Q\times \Omega} f\left(x, T\left(\frac{x}{\epsilon}\right)\omega, \frac{x}{\epsilon^{2}}, \, Dv(x,\omega) \right)dxd\mu,  \\ \quad v \in W^{1}_{0}L^{\Phi}_{D_{x}}(Q; L^{\Phi}(\Omega)), \nonumber
\end{eqnarray} 
with $f$ a random-periodic integrand (see, Subsection \ref{hypoproblem} and comment below) and we are interested in the  homogenization (i.e., the analysis of  asymptotic behaviour when $0<\epsilon \rightarrow 0$ of the sequence of solutions) of problem (\ref{ras2})-(\ref{ras1}). The study of this homogenization problem under random and periodic considerations of the integrand $f$ is so-called stochastic-periodic homogenization.

\subsection{Our hypotheses}\label{hypoproblem}

Let us specify data in (\ref{ras2})-(\ref{ras1}). We refer to Section \ref{sect2} for general notations.
\begin{itemize}
	\item $\Phi : [0,+\infty[ \rightarrow [0,+\infty[ $ be a Young function of class   $\Delta_{2}\cap \Delta'$, such that its  conjugate $\widetilde{\Phi}$ is also a Young function of class $\Delta_{2}\cap \Delta'$ ;
	\item $(\Omega, \mathscr{M}, \mu)$ be a measure space with probability measure $\mu$ ;
	\item $Q$ is a bounded open set in $\mathbb{R}^{N}_{x}$ (the space $\mathbb{R}^{N}$ of variables $x = (x_{1}, \cdots , x_{N})$) and $\{ T(x) : \Omega \rightarrow \Omega, \, x \in \mathbb{R}^{N} \}$ is a fixed $N$-dimensional dynamical system on $\Omega$, with respect to a fixed probability measure $\mu$ on $\Omega$ invariant for $T$ ;
	\item $D$ denotes the usual gradient operator in $Q$, i.e., $D = D_{x}= \left(\frac{\partial}{\partial x_{i}} \right)_{1\leq i\leq N}$ and $W^{1}_{0}L^{\Phi}_{D_{x}}(Q; L^{\Phi}(\Omega))$ is an Orlicz-Sobolev space which will be specified later (see, (\ref{c1orli1S})) ;
	\item $f : \mathbb{R}^{N}\times\Omega\times\mathbb{R}^{N}\times \mathbb{R}^{N}\rightarrow\mathbb{R}$, $(x,\omega,y,\lambda) \rightarrow f(x,\omega,y,\lambda)$ is a random-periodic integrand satisfying the following properties:
	\begin{itemize}
		\item[(H$_{1}$)] \textit{(Carath\'{e}odory function hypothesis)} : for all $(x,\lambda) \in \mathbb{R}^{N}\times \mathbb{R}^{N}$ and for almost all $(\omega,y)\in\Omega\times\mathbb{R}^{N}$, $f(x,\cdot,\cdot, \lambda)$ is measurable and $f(\cdot,\omega,\cdot,\cdot)$ is continuous. Moreover, there exist a continuous positive function $\varpi : \mathbb{R} \to \mathbb{R}_{+}$ with $\varpi(0)=0$, and a function $a \in L^{1}_{loc}(\mathbb{R}^{N}_{y})$ such that 
		\begin{equation}
			|f(x,\omega,y,\lambda) - f(x',\omega,y,\lambda)| \leq \varpi(|x - x'|)[a(y) + f(x,\omega,y,\lambda)],
		\end{equation}
		for all $x, x' \in \mathbb{R}^{N}$, $\lambda \in \mathbb{R}^{N}$ and for $d\mu\times dy$-almost all $(\omega,y) \in \Omega\times\mathbb{R}^{N}$ ; 
		\item[(H$_{2}$)] \textit{(Strictly convexity hypothesis) } :$f(x,\omega,y,\cdot)$ is strictly convex for $d\mu\times dy$-almost all $(\omega,y) \in \Omega\times\mathbb{R}^{N}$ and for all $x\in \mathbb{R}^{N}$ ;
		\item[(H$_{3}$)] \textit{(Nonstandard growth hypothesis)} : There are two constants $c_{1}, \, c_{2} > 0$ such that 
		\begin{equation}
			c_{1} \Phi(|\lambda|) \leq f(x,\omega,y,\lambda) \leq c_{2}(1+ \Phi(|\lambda|)),
		\end{equation}
		for all $(x,\lambda) \in \mathbb{R}^{N}\times \mathbb{R}^{N}$ and for $d\mu\times dy$-almost all $(\omega,y) \in \Omega\times\mathbb{R}^{N}$ 
		\item[(H$_{4}$)] \textit{(Periodic structure hypothesis)} : $f(x,\omega,\cdot,\lambda)$ is periodic for all $(x,\lambda) \in \mathbb{R}^{N}\times \mathbb{R}^{N}$ and for almost all $\omega \in \Omega$, i.e.,
		\begin{equation}
			\textup{for\; each} \;\, k \in \mathbb{Z}^{N}, \; f(x, \omega, y+k, \lambda) = f(x, \omega, y, \lambda), \;\; \textup{for\; all} \;\, y \in \mathbb{R}^{N}.
		\end{equation}  
	\end{itemize}
\end{itemize}

\subsection{Literature review}

With considerations to the principles of physics, for example in elasticity theory (see \cite{gambi}), the term $F_{\epsilon}(v)$ can be viewed as the energy under a deformation $v$ of an elastic body whose microstructures behave randomly in some scales and periodically in some other scales. 
Let us already note that, the minimization problems of the form \eqref{ras2}-\eqref{ras1} have been studied by many authors in Orlicz setting, taking into account only the stochastic aspect or periodic aspect, separately. So, in \cite{tacha1}, J. Fotso Tachago and H. Nnang after having extended the periodic two-scale convergence (see, e.g., \cite{allair1,nguet1}) to Orlicz setting, will study the periodic homogenization of problem \eqref{ras2}-\eqref{ras1}, where the functionals $F_{\epsilon}$ are under the form $\int_{Q} f(\frac{x}{\epsilon}, Dv(x))\,dx$, with $v \in W^{1}_{0}L^{\Phi}(Q)$. In \cite{tacha3}, the same authors in order to tackle with periodic multiscale problems in Orlicz setting, will extend the reiterated periodic  two-scale convergence method (see, e.g., \cite{allair3}) and will study the periodic homogenization of problem \eqref{ras2}-\eqref{ras1}, where the functionals $F_{\epsilon}$ are under the form $\int_{Q} f(\frac{x}{\epsilon}, \frac{x}{\epsilon^{2}}, Dv(x))\,dx$ with $v \in W^{1}_{0}L^{\Phi}(Q)$.
In \cite{martin}, M. Kalousek  will use the periodic two-scale convergence (extended to  Orlicz's spaces by J. Fotso Tachago and H. Nnang \cite{tacha1}), for the homogenization of incompressible generalized Stokes flows through a porous medium. For others works in related with periodic homogenization in  Orlicz-Sobolev spaces, we refer to \cite{tacha2,tacha5,tacha4,ttchin1}. On the other hand, J. Dongho \textit{et al} \cite{franck}, after having extended to Orlicz setting the stochastic two-scale convergence in the mean (see \cite{Bourgeat}), will study the stochastic homogenization of problem \eqref{ras2}-\eqref{ras1}, where the functionals $F_{\epsilon}$ are under the form $\iint_{Q\times \Omega} f\left(T\left(\frac{x}{\epsilon}\right)\omega,\, Dv(x,\omega) \right)dxd\mu$ with $v \in W^{1}_{0}L^{\Phi}_{D_{x}}(Q; L^{\Phi}(\Omega))$.

However, all the previously cited works take into account either periodic aspects only, or stochastic aspects  in the homogenization process; which is not always consistent with most physical phenomena since most of these phenomena behave randomly in some scales, and periodically in others scales. Moreover, a scale can not be at the same time periodic and random. For instance, it is known that the human body is an example of medium that presents both a random and deterministic (or periodic) behaviour. In particular, it contains an exciting class of nonlinear materials presenting microstructures like myocardium, arterials walls, cartilage, muscles, etc. 
Thus, in \cite{sango}, M. Sango and J.L. Woukeng propose a general method which they call \textit{stochastic two-scale convergence} and which will be used later to deal with coupled stochastic-periodic homogenization problems in the framework of classical Lebesgue spaces.
In addition, they study the homogenization of problem \eqref{ras2}-\eqref{ras1}, where the functionals $F_{\epsilon}$ are under the form  $\iint_{Q\times\Omega} f(x,T(\frac{x}{\epsilon})\omega, \frac{x}{\epsilon^{2}}, Dv(x,\omega))\,dx\,d\mu$ with $v \in L^{p}(\Omega, W_{0}^{1,p}(Q))$. In \cite{joelf}, J. Fotso Tachago and H. Nnang use this stochastic two-scale convergence for the homogenization of Maxwell equations with linear and periodic conductivity.  For others works on stochastic homogenization and periodic homogenization in classical Sobolev's spaces we refer, e.g., to \cite{abda,andre,bia,blan,cham,dal,gambi,jikov}.

\subsection{Motivation and objectives}

As mentioned at the beginning of the introduction, our main objective in this paper is to homogenize (in Orlicz setting) the 
minimization problem (\ref{ras2})-(\ref{ras1}), under the hypotheses of Subsection \ref{hypoproblem}. But, first of all we have need to extend the \textit{stochastic two-scale convergence} method (see, \cite{sango}) to the framework of Orlicz-Sobolev spaces. To the best our knowledge, the homogenization problem (\ref{ras2})-(\ref{ras1})  has not been address in Orlicz setting, thus making this work original.  
The extension to the Orlicz-Sobolev spaces is  motivated by the fact that the stochastic two-scale convergence method introduced by M. Sango and J.L. Woukeng \cite{sango} have been widely adopted in stochastic-periodic homogenization of partial differential equations in classical Sobolev spaces neglecting materials where microstructure cannot be conveniently captured by modeling exclusively by means of those spaces.  Moreover, it is well known (see, e.g. \cite{martin,mignon2}) that there exist most problems whose solutions must naturally belong not to the classical Sobolev spaces, but rather to the Orlicz-Sobolev spaces. 

Note that, this homogenization method whose we extend to Orlicz setting is a combination of  both well-known schemes, periodic two-scale convergence [see, J. Fotso Tachago and H. Nnang: Two-scale convergence of integral functionals
with convex, periodic and nonstandard growth integrands, \textit{Acta applicandae
	mathematicae}, \textbf{121}:175--196,2012] and stochastic two-scale convergence in the mean [see, J. Dongho, J. Fotso Tachago, and F. Tchinda Takougoum: Stochastic twos-cale convergence in the mean in orlicz-sobolev's spaces and applications to the
homogenization of an integral functional, \textit{Asymptotic Analysis}, \textbf{142}(1):291--320,
2025]. 

\subsection{Main results}

In the present paper, a first novelty is concerned the generalization of  compactness results for stochastic two-scale convergence (see, e.g. \cite{sango}) to  Orlicz-Sobolev spaces.
At the same time, the solutions of the  homogenization problem \eqref{ras2}-\eqref{ras1} under consideration seems to be more general than the case considered in \cite{sango}.
Thus, considering the notations in Section \ref{sect2}, we prove the following compactness results.

\begin{theorem}(compactness 1)\label{lem4} \\
	Let $\Phi \in \Delta_{2}$ be a Young function and let $E$ be a \textit{fundamental sequence} (see, Section \ref{sect3}).	Then, any bounded sequence $(u_{\epsilon})_{\epsilon\in E}$  in $L^{\Phi}(Q\times\Omega)$ admits a subsequence which is weakly stochastically 2-scale convergent in $ L^{\Phi}(Q\times\Omega)$.
\end{theorem}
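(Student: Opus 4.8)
The plan is to obtain the two-scale limit as the representative, through Orlicz duality, of a bounded linear functional on a separable space, the functional being built by a diagonal extraction over oscillating test functions; in effect this is a refinement, adapted to the oscillations $x/\epsilon$ and $x/\epsilon^{2}$, of the weak-$*$ sequential compactness of bounded sets in $L^{\Phi}(Q\times\Omega)$. Recall from Section~\ref{sect2} that for any Young function $\Psi$ one has the duality $\big(\mathscr{E}^{\Psi}(Q\times\Omega\times Y)\big)'=L^{\widetilde{\Psi}}(Q\times\Omega\times Y)$, where $Y=(0,1)^{N}$ is the periodicity cell and $\mathscr{E}^{\Psi}$ is the closure of the bounded functions in $L^{\Psi}$; taking $\Psi=\widetilde{\Phi}$, the space $L^{\Phi}(Q\times\Omega\times Y)$ is the dual of the separable space $\mathscr{E}^{\widetilde{\Phi}}(Q\times\Omega\times Y)$. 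I would fix the algebra $\mathcal{C}$ of admissible test functions $\psi(x,\omega,y)$ from Section~\ref{sect3} (smooth with compact support in $x$, in a suitable countably generated subspace in $\omega$, smooth and $Y$-periodic in $y$), which is dense in $\mathscr{E}^{\widetilde{\Phi}}(Q\times\Omega\times Y)$ and admits a countable dense subset $\mathcal{D}$, and for $\psi\in\mathcal{C}$, $\epsilon\in E$ I write $\psi_{\epsilon}(x,\omega)=\psi\!\big(x,T(x/\epsilon)\omega,x/\epsilon^{2}\big)$.

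For $\psi\in\mathcal{C}$ I would introduce the linear functional $\mathcal{L}_{\epsilon}(\psi)=\iint_{Q\times\Omega}u_{\epsilon}\,\psi_{\epsilon}\,dx\,d\mu$. By the H\"older inequality in Orlicz spaces, $|\mathcal{L}_{\epsilon}(\psi)|\le 2\,\|u_{\epsilon}\|_{L^{\Phi}(Q\times\Omega)}\,\|\psi_{\epsilon}\|_{L^{\widetilde{\Phi}}(Q\times\Omega)}$, and the first factor is bounded by hypothesis. The decisive point is the mean-value estimate
\begin{equation}\label{meanest}
\limsup_{\epsilon\in E,\ \epsilon\to0}\ \|\psi_{\epsilon}\|_{L^{\widetilde{\Phi}}(Q\times\Omega)}\ \le\ \|\psi\|_{L^{\widetilde{\Phi}}(Q\times\Omega\times Y)}.
\end{equation}
To prove it I would fix $\lambda>0$ and observe that $(x,\omega,y)\mapsto\widetilde{\Phi}\!\big(|\psi(x,\omega,y)|/\lambda\big)$ is again bounded, continuous in $(x,y)$, measurable in $\omega$ and $Y$-periodic, hence lies in the class to which the averaging results of Section~\ref{sect3} apply; combining the ergodic theorem for the dynamical system $\{T(x)\}$ along the fundamental sequence $E$ (which averages out the $x/\epsilon$-oscillation in the variable $\omega$) with the classical mean-value property of continuous periodic functions (which averages out the $x/\epsilon^{2}$-oscillation in the variable $y$) yields
\begin{equation}\label{modconv}
\iint_{Q\times\Omega}\widetilde{\Phi}\!\left(\frac{|\psi_{\epsilon}(x,\omega)|}{\lambda}\right)dx\,d\mu\ \longrightarrow\ \iint_{Q\times\Omega}\int_{Y}\widetilde{\Phi}\!\left(\frac{|\psi(x,\omega,y)|}{\lambda}\right)dy\,dx\,d\mu .
\end{equation}
Choosing $\lambda>\|\psi\|_{L^{\widetilde{\Phi}}(Q\times\Omega\times Y)}$ makes the right-hand side of \eqref{modconv} at most $1$, so $\|\psi_{\epsilon}\|_{L^{\widetilde{\Phi}}(Q\times\Omega)}\le\lambda$ for $\epsilon$ small, and \eqref{meanest} follows by letting $\lambda$ decrease to $\|\psi\|_{L^{\widetilde{\Phi}}(Q\times\Omega\times Y)}$. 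In particular $\sup_{\epsilon\in E}|\mathcal{L}_{\epsilon}(\psi)|\le C\,\|\psi\|_{L^{\widetilde{\Phi}}(Q\times\Omega\times Y)}$ with $C$ independent of $\epsilon$ and $\psi$.

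Next I would apply Cantor's diagonal procedure to the countable set $\mathcal{D}$ to extract a subsequence $E'\subset E$ along which $\mathcal{L}_{\epsilon}(\psi)$ converges for every $\psi\in\mathcal{D}$; the uniform bound above together with a routine density argument upgrades this to $\mathcal{L}_{\epsilon}(\psi)\to\mathcal{L}(\psi)$ for every $\psi\in\mathcal{C}$, and $\mathcal{L}$ extends by density to a bounded linear functional on $\mathscr{E}^{\widetilde{\Phi}}(Q\times\Omega\times Y)$ of norm $\le C$. By the duality recalled above there is a unique $u_{0}\in L^{\Phi}(Q\times\Omega\times Y)$ with $\mathcal{L}(\psi)=\iint_{Q\times\Omega}\int_{Y}u_{0}\,\psi\,dy\,dx\,d\mu$ for all $\psi$, which is precisely the statement that $(u_{\epsilon})_{\epsilon\in E'}$ is weakly stochastically $2$-scale convergent to $u_{0}$ in the sense of Section~\ref{sect3}; this would conclude the proof.

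The main obstacle is estimate \eqref{meanest}, that is, the modular passage to the limit \eqref{modconv} with the two coupled oscillating scales $x/\epsilon$ and $x/\epsilon^{2}$ present simultaneously: one cannot work directly with the (only implicitly defined) Luxemburg norm and must instead transfer the whole question to the modular of $\widetilde{\Phi}\circ(|\psi|/\lambda)$, verifying that this composition is still an admissible test function so that both the ergodic theorem along the fundamental sequence and the periodic mean-value lemma apply, and that the two scales genuinely decouple because $\epsilon^{2}\ll\epsilon$. The hypothesis $\Phi\in\Delta_{2}$ enters only through the separability and the density/duality properties of the Orlicz spaces recalled in Sections~\ref{sect2}--\ref{sect3}; the remaining ingredients (Orlicz--H\"older, the diagonal extraction, and the representation via the separable predual) are soft.
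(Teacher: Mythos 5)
Your proposal is correct and follows essentially the same route as the paper: a H\"older bound $|\mathcal{L}_{\epsilon}(\psi)|\le 2\|u_{\epsilon}\|_{L^{\Phi}}\|\psi_{\epsilon}\|_{L^{\widetilde{\Phi}}}$ combined with the convergence of $\|\psi_{\epsilon}\|_{L^{\widetilde{\Phi}}(Q\times\Omega)}$ to $\|\psi\|_{L^{\widetilde{\Phi}}(Q\times\Omega\times Y)}$ (the paper's Lemma 3.2, proved exactly by passing to the modular, using the $T$-invariance of $\mu$ to average out the $x/\epsilon$ scale and the periodic mean-value property for $x/\epsilon^{2}$), followed by extraction and Orlicz duality, which the paper outsources to an abstract compactness proposition instead of running the diagonal argument explicitly. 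Your modular treatment of the Luxemburg norm is if anything slightly more careful than the paper's interchange of limit and infimum, and your one-sided $\limsup$ bound is all that is needed.
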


\begin{theorem}(Compactness 2)\label{lem29} \\
	Let $\Phi \in \Delta_{2}$ be a Young function and $\widetilde{\Phi} \in \Delta_{2}$ its conjugate. Let $E$ be a \textit{fundamental sequence}.
	Assume $(u_{\epsilon})_{\epsilon\in E}$ is a sequence in $W^{1}L^{\Phi}_{D_{x}}(Q; L^{\Phi}(\Omega))$ such that : 
	\begin{itemize}
		\item[(i)] $(u_{\epsilon})_{\epsilon\in E}$ is bounded in $L^{\Phi}\left(Q\times\Omega\right)$ and $(D_{x}u_{\epsilon})_{\epsilon\in E}$ is bounded in $L^{\Phi}\left(Q\times\Omega\right)^{N}$.
	\end{itemize}
	Then there exist $u_{0} \in W^{1}L^{\Phi}_{D_{x}}(Q; I_{nv}^{\Phi}(\Omega))$, $u_{1} \in L^{1}\left(Q; W^{1}_{\#}L^{\Phi}(\Omega)\right)$ with $\overline{D}_{\omega}u_{1} \in L^{\Phi}(Q\times\Omega)^{N}$, $u_{2}\in L^{1}(Q\times\Omega; W^{1}_{\#}L^{\Phi}_{per}(Y))$ with $D_{y}u_{2} \in L^{\Phi}(Q\times\Omega\times Y_{per})^{N}$ and a subsequence $E'$ from $E$ such that, as $E' \ni \epsilon \rightarrow 0$, 
	\begin{itemize}
		\item[(ii)] $u_{\epsilon} \rightarrow u_{0}$ stoch. in $L^{\Phi}(Q\times \Omega)$-weak 2s;
		\item[(iii)] $D_{x}u_{\epsilon} \rightarrow D_{x}u_{0} + \overline{D}_{\omega}u_{1} + D_{y}u_{2}$ stoch. in $L^{\Phi}(Q\times \Omega)^{N}$-weak 2s,  with $u_{1} \in L^{\Phi}(Q; W^{1}_{\#}L^{\Phi}(\Omega))$, $u_{2}\in L^{\Phi}(Q\times\Omega; W^{1}_{\#}L^{\Phi}_{per}(Y))$  when $\widetilde{\Phi} \in \Delta'$.  
	\end{itemize}
\end{theorem}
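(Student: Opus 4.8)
The plan is to adapt the classical three-scale compactness argument (as in \cite{sango}, but with $L^p$ replaced by $L^\Phi$) using the already-established compactness theorem for the zeroth-order terms (Theorem \ref{lem4}) together with the structural characterization of stochastic two-scale limits of gradients. First I would apply Theorem \ref{lem4} to the bounded sequence $(u_\epsilon)$ and to each component of $(D_x u_\epsilon)$: passing to a common subsequence $E'$, we obtain $u_0 \in L^\Phi(Q\times\Omega)$ and $v \in L^\Phi(Q\times\Omega)^N$ such that $u_\epsilon \to u_0$ and $D_x u_\epsilon \to v$ stochastically weak-$2s$. The first real task is to identify $v$. Testing the two-scale convergence against test functions of the form $\psi(x,\omega)\,\varphi(T(y)\omega)\,\theta(z)$ that are divergence-free in the appropriate sense (i.e. annihilate admissible gradients in the $\omega$- and $z$-variables), integrating by parts in $x$, and letting $\epsilon\to0$, one shows that $v - D_x u_0$ is orthogonal to all such test fields; by the De~Rham-type characterization of stochastic/periodic gradients (the analogue of the decomposition of $L^\Phi(Q\times\Omega\times Y)^N$ into a gradient part and a solenoidal part, which is available once $\widetilde\Phi \in \Delta_2$ so that the duality $L^\Phi$–$L^{\widetilde\Phi}$ is well behaved), this forces $v = D_x u_0 + \overline D_\omega u_1 + D_y u_2$ for suitable correctors $u_1$, $u_2$. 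The membership $u_0 \in W^1 L^\Phi_{D_x}(Q; I^\Phi_{nv}(\Omega))$ follows because choosing test fields that probe only the macroscopic scale shows the $\Omega$-average of $D_x u_\epsilon$ converges to $D_x(\text{mean of }u_0)$, and the invariance of $u_0$ is extracted by testing against functions depending on $\omega$ through $T(x/\epsilon)\omega$ only, exploiting the ergodic-type mean-value property of the dynamical system.

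Next I would establish the integrability of the correctors. From the construction, $u_1(x,\cdot) \in W^1_\# L^\Phi(\Omega)$ for a.e.\ $x$ and $u_2(x,\omega,\cdot) \in W^1_\# L^\Phi_{per}(Y)$ for a.e.\ $(x,\omega)$, with the gradients $\overline D_\omega u_1$ and $D_y u_2$ lying in $L^\Phi(Q\times\Omega)^N$ and $L^\Phi(Q\times\Omega\times Y_{per})^N$ respectively, simply because they are (weak-$2s$) limits of pieces of the bounded sequence $D_x u_\epsilon$; the $L^1$-in-$x$ (respectively $L^1$-in-$(x,\omega)$) integrability of $u_1$, $u_2$ themselves then comes from a Poincaré–Wirtinger inequality on the probability space $\Omega$ and on the torus $Y$ in the Orlicz norm, which controls the zero-mean function by the norm of its gradient. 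The upgrade in item (iii) — namely $u_1 \in L^\Phi(Q; W^1_\# L^\Phi(\Omega))$ and $u_2 \in L^\Phi(Q\times\Omega; W^1_\# L^\Phi_{per}(Y))$ under the extra assumption $\widetilde\Phi \in \Delta'$ — requires the sharper Poincaré inequality valid in that class, together with the embedding/interpolation properties of $L^\Phi$ that hold when both $\Phi$ and $\widetilde\Phi$ satisfy the $\Delta'$ condition; one integrates the pointwise (in $x$, or in $(x,\omega)$) Orlicz–Poincaré estimate over $Q$ and uses that $\overline D_\omega u_1$, $D_y u_2$ are globally in $L^\Phi$.

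The main obstacle, I expect, is the De~Rham / orthogonality step that pins down the limit $v$: in the classical $L^p$ setting this rests on the reflexivity of $L^p$ and on having a clean Helmholtz-type decomposition of $L^{p'}$ vector fields into potential and solenoidal parts relative to the dynamical system, and transferring this to $L^\Phi$ is delicate because $L^\Phi$ need not be reflexive and the decomposition must be built by hand. The way around it is to use the $\Delta_2$ condition on both $\Phi$ and $\widetilde\Phi$ to recover $(L^\Phi)^* = L^{\widetilde\Phi}$ and separability, then to construct the relevant spaces of admissible test fields $\mathcal V_\Phi$ (generated by $D_x$, $\overline D_\omega$, $D_y$ applied to smooth cylindrical functions) and their polar sets, mirroring the constructions already used in \cite{franck} for the purely stochastic case and in \cite{tacha3} for the reiterated periodic case; the coupling of the two is then essentially a matter of combining those two machineries on the product $\Omega \times Y$ and checking that no new difficulty arises from the $x$-dependence, which is harmless since $x$ plays only the role of a parameter in the compactness statement. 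A secondary technical point is ensuring that the test functions of the form $\varphi(T(x/\epsilon)\omega)$ two-scale converge to $\varphi$ with the right strong-$2s$ convergence in $L^{\widetilde\Phi}$; this is exactly where the $\Delta_2$ hypothesis on $\widetilde\Phi$ and the Birkhoff-type ergodic theorem for the dynamical system enter, and it is the bridge that lets the weak-$2s$ convergence of $D_x u_\epsilon$ be tested against the solenoidal fields.
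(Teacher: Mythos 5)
Your overall skeleton matches the paper for the zeroth-order limit and for the stochastic corrector $u_{1}$: apply Theorem \ref{lem4} to $(u_{\epsilon})$ and to $(D_{x}u_{\epsilon})$, show that $u_{0}$ is independent of $y$ and is $T$-invariant by testing against $\epsilon^{2}\varphi f g$ and $\epsilon\varphi f$ respectively, and obtain $u_{1}$ by testing $D_{x}u_{\epsilon}$ against $\varphi(x)\mathbf{\Psi}(T(x/\epsilon)\omega)$ with $\mathbf{\Psi}\in\mathcal{V}^{\widetilde{\Phi}}_{\textup{div}}$ and invoking the De~Rham-type Lemma \ref{lem5}. The gap is in your treatment of the periodic corrector $u_{2}$. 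You propose to obtain the full decomposition $\mathbf{v}-Du_{0}=\overline{D}_{\omega}u_{1}+D_{y}u_{2}$ in one stroke from orthogonality to suitably divergence-free fields on the product $\Omega\times Y$, asserting that combining the machineries of \cite{franck} and \cite{tacha3} presents ``no new difficulty.'' That combined De~Rham statement --- that the polar set of the solenoidal test fields on $\Omega\times Y$ is exactly the set of sums of a $y$-independent stochastic gradient and a periodic gradient --- is not available off the shelf and is essentially equivalent to what has to be proved: neither Lemma \ref{lem5} (which lives on $\Omega$ alone) nor its periodic counterpart yields it, and the reiterated structure (with $u_{1}$ not depending on $y$) is precisely the obstruction.

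The paper circumvents this by a two-stage argument that your proposal does not contain. First, the periodic corrector is extracted by a scaled difference-quotient device: one sets $z^{r}_{\epsilon}(x,\omega)=\epsilon^{-2}\bigl(u_{\epsilon}(x,\omega)-|B_{\epsilon^{2}r}|^{-1}\int_{B_{\epsilon^{2}r}}u_{\epsilon}(x+\rho,\omega)\,d\rho\bigr)$, shows it is bounded in $L^{\Phi}(Q\times\Omega)$ thanks to the gradient bound, extracts a weak 2s limit $z_{r}$, and identifies $\partial z_{r}/\partial y_{i}=v_{i}-|B_{r}|^{-1}\int_{B_{r}}v_{i}(\cdot,\cdot,\cdot+\rho)\,d\rho$ by means of the translation Lemma \ref{lemc1}, whose proof in turn requires the strong continuity of the group $U(x)$ on $L^{\widetilde{\Phi}}(\Omega)$ and a uniform integrability argument; letting $r\to\infty$ and using the $Y$-periodicity of $\mathbf{v}$ then yields $D_{y}u_{2}=\mathbf{v}-\mathcal{M}_{Y}(\mathbf{v})$. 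Only after averaging out $y$ does the solenoidal-test-field argument apply, and Lemma \ref{lem5} gives $\overline{D}_{\omega}u_{1}=\mathcal{M}_{Y}(\mathbf{v})-Du_{0}$. Without this averaging step (or an actual proof of your combined decomposition on $\Omega\times Y$), the identification of the limit in (iii) is not established. Your remarks on the $L^{1}$ versus $L^{\Phi}$ integrability of the correctors are consistent with the paper, which delegates them to \cite{franck} and \cite{tacha2} rather than to a Poincar\'e inequality proved in situ.
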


On the other hand, after having establish the existence and uniqueness of solutions to the problem \eqref{ras2}-\eqref{ras1} (see, Theorem \ref{unitsolution}), we prove the main homogenization result for this paper. It reads as follows.

\begin{theorem}(Main homogenization result)\label{lem37} \\
	Let $\Phi \in \Delta_{2} \cap \Delta'$ be a Young function such that $\widetilde{\Phi} \in \Delta_{2} \cap \Delta'$ and let $E$ be a \textit{fundamental sequence}.	For each $\epsilon > 0$, let $(u_{\epsilon})_{\epsilon\in E} \in W^{1}_{0}L^{\Phi}_{D_{x}}(Q; L^{\Phi}(\Omega))$ be the unique solution of (\ref{ras2}). Then, as $\epsilon \to 0$, 
	\begin{equation*}
		u_{\epsilon} \rightarrow u_{0} \quad stoch. \;in \; L^{\Phi}(Q\times\Omega)-weak \,2s,  
	\end{equation*}
	and 
	\begin{equation*}
		Du_{\epsilon} \rightarrow Du_{0} + \overline{D}_{\omega}u_{1} + D_{y}u_{2} \quad stoch. \; in \; L^{\Phi}(Q\times\Omega)^{N}-weak \,2s,  
	\end{equation*}
	where $\mathbf{u} = (u_{0}, u_{1}, u_{2}) \in \mathbb{F}_{0}^{1}L^{\Phi}=  W^{1}_{0}L^{\Phi}_{D_{x}}(Q; I_{nv}^{\Phi}(\Omega))\times L^{\Phi}(Q; W^{1}_{\#}L^{\Phi}(\Omega))\times L^{\Phi}(Q\times\Omega; W^{1}_{\#}L^{\Phi}_{per}(Y))$  is the unique solution to the minimization problem 
	\begin{equation*}
		\textup{inf}\left\{ \iiint_{Q\times\Omega\times Y} f(x,\omega,y, \mathbb{D}\mathbf{v})\, dxd\mu dy \, : \, \mathbf{v}=(v_{0}, v_{1}, v_{2}) \in \mathbb{F}_{0}^{1}L^{\Phi} \right\},
	\end{equation*} 
	with $\mathbb{D}\mathbf{v}= Dv_{0} + \overline{D}_{\omega}v_{1} + D_{y}v_{2}$.
\end{theorem}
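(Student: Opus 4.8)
The plan is to follow the by-now standard $\Gamma$-convergence / direct-method architecture for homogenization of convex integral functionals, but executed entirely with the stochastic two-scale machinery developed earlier in the paper. First I would record the \emph{a priori} estimate: from the coercivity in (H$_3$), $c_1\iint_{Q\times\Omega}\Phi(|Du_\epsilon|)\,dxd\mu \le F_\epsilon(u_\epsilon)\le F_\epsilon(0)\le c_2|Q|$, so $(Du_\epsilon)_{\epsilon\in E}$ is bounded in $L^\Phi(Q\times\Omega)^N$; combined with the Orlicz-Poincar\'e inequality on $W^1_0L^\Phi_{D_x}(Q;L^\Phi(\Omega))$ this bounds $(u_\epsilon)$ in $L^\Phi(Q\times\Omega)$ as well. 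Theorem \ref{lem29} (together with the $\Delta'$-hypotheses on $\Phi$ and $\widetilde\Phi$, which upgrade $u_1,u_2$ to the right Orlicz classes) then hands us, along a subsequence $E'$, limits $u_0\in W^1_0L^\Phi_{D_x}(Q;I^\Phi_{nv}(\Omega))$, $u_1\in L^\Phi(Q;W^1_\#L^\Phi(\Omega))$, $u_2\in L^\Phi(Q\times\Omega;W^1_\#L^\Phi_{per}(Y))$ with $u_\epsilon\to u_0$ weak $2s$ and $Du_\epsilon\to Du_0+\overline D_\omega u_1+D_y u_2=\mathbb D\mathbf u$ weak $2s$; here I would need to check that the boundary condition $u_0\in W^1_0$ survives the passage to the limit, which follows because the weak $2s$-limit of gradients of functions vanishing on $\partial Q$ again has that property (approximation / integration-by-parts against test functions supported away from $\partial Q$).

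The heart of the argument is the lower semicontinuity inequality
\begin{equation*}
  \liminf_{E'\ni\epsilon\to0} F_\epsilon(u_\epsilon)\;\ge\; \mathcal F(\mathbf u):=\iiint_{Q\times\Omega\times Y} f(x,\omega,y,\mathbb D\mathbf u)\,dxd\mu dy.
\end{equation*}
I would obtain this by the convexity (subgradient) inequality: fix a smooth test triple $\boldsymbol\psi=(\psi_0,\psi_1,\psi_2)$ generating a legitimate admissible oscillating test function $\psi_\epsilon(x,\omega)$ whose gradient two-scale converges \emph{strongly} to $\mathbb D\boldsymbol\psi$ (built from $\psi_0\in\mathcal C^\infty_0(Q)$, a finite-dimensional-dynamical-system generator for the $\omega$-variable, and a $Y$-periodic smooth function for the $y$-variable, sampled as $\psi_1(x,T(x/\epsilon)\omega)$ and $\psi_2(x,T(x/\epsilon)\omega,x/\epsilon^2)$), and write
\begin{equation*}
  f\!\left(x,T(\tfrac x\epsilon)\omega,\tfrac x{\epsilon^2},Du_\epsilon\right)\ge f\!\left(x,T(\tfrac x\epsilon)\omega,\tfrac x{\epsilon^2},D\psi_\epsilon\right)+ g\!\left(x,T(\tfrac x\epsilon)\omega,\tfrac x{\epsilon^2},D\psi_\epsilon\right)\cdot\bigl(Du_\epsilon-D\psi_\epsilon\bigr),
\end{equation*}
where $g=\partial_\lambda f$ (well-defined a.e. by convexity, with growth $|g|\le c\,\widetilde\Phi^{-1}(\cdots)$-type bounds coming from (H$_3$) and the $\Delta_2/\Delta'$ conditions, so $g(\cdot,\cdot,\cdot,D\psi_\epsilon)$ is bounded in $L^{\widetilde\Phi}$). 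Passing to the limit: the first term converges to $\iiint f(x,\omega,y,\mathbb D\boldsymbol\psi)$ by (H$_1$)-continuity and the mean-value / ergodic-type convergence of admissible functions (Birkhoff's theorem along $T$, Riemann-Lebesgue in $y$) — this is where (H$_1$)'s Carath\'eodory-with-modulus condition is used to control the $x$-dependence uniformly; the second (linear) term converges because a weak $2s$-convergent sequence paired with a strongly $2s$-convergent sequence in the dual Orlicz space converges to the product of the limits (the $L^\Phi$–$L^{\widetilde\Phi}$ two-scale duality lemma from the compactness section). Hence $\liminf F_\epsilon(u_\epsilon)\ge \iiint f(x,\omega,y,\mathbb D\boldsymbol\psi)+\iiint g(x,\omega,y,\mathbb D\boldsymbol\psi)\cdot(\mathbb D\mathbf u-\mathbb D\boldsymbol\psi)$, and then letting $\boldsymbol\psi\to\mathbf u$ in $\mathbb F^1_0L^\Phi$ (density of smooth admissible triples, a point I would justify via the separability/density statements for $W^1_\#L^\Phi(\Omega)$ and $W^1_\#L^\Phi_{per}(Y)$ recorded earlier) and using continuity of $f$ and the growth bound on $g$ to kill the remainder, I get the desired lower bound $\liminf F_\epsilon(u_\epsilon)\ge\mathcal F(\mathbf u)$.

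For the matching upper bound and the identification of $\mathbf u$ as \emph{the} minimizer, I would first establish that $\mathcal F$ attains its infimum on $\mathbb F^1_0L^\Phi$ at a unique point $\mathbf u^\star=(u_0^\star,u_1^\star,u_2^\star)$ — existence by the direct method ($\mathcal F$ is coercive and, by (H$_2$)+convexity in $\lambda$ plus the injectivity of $\mathbb D$ on $\mathbb F^1_0L^\Phi$, strictly convex / weakly lower semicontinuous in the two-scale topology), uniqueness by that strict convexity. Then take a recovery sequence: given the smooth approximations $\boldsymbol\psi^\star\approx\mathbf u^\star$, the oscillating function $w_\epsilon(x,\omega)=\psi_0^\star(x)+\epsilon\,\psi_1^\star(x,T(\tfrac x\epsilon)\omega)+\epsilon^2\,\psi_2^\star(x,T(\tfrac x\epsilon)\omega,\tfrac x{\epsilon^2})$ lies in $W^1_0L^\Phi_{D_x}(Q;L^\Phi(\Omega))$, its gradient two-scale converges \emph{strongly} to $\mathbb D\boldsymbol\psi^\star$, and by the strong two-scale continuity of the Nemytskii operator attached to $f$ (again (H$_1$), (H$_3$), $\Delta_2$) one gets $F_\epsilon(w_\epsilon)\to\iiint f(x,\omega,y,\mathbb D\boldsymbol\psi^\star)$; letting $\boldsymbol\psi^\star\to\mathbf u^\star$ yields $\limsup_\epsilon \inf F_\epsilon\le\limsup_\epsilon F_\epsilon(w_\epsilon)\to\mathcal F(\mathbf u^\star)$. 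Chaining the inequalities, $\mathcal F(\mathbf u)\le\liminf F_\epsilon(u_\epsilon)=\liminf\inf F_\epsilon\le\limsup\inf F_\epsilon\le\mathcal F(\mathbf u^\star)\le\mathcal F(\mathbf u)$, forces equality throughout, so $\mathbf u$ minimizes $\mathcal F$ and by uniqueness $\mathbf u=\mathbf u^\star$; since the limit is independent of the subsequence, Urysohn's principle gives convergence of the whole family as $\epsilon\to0$, completing the proof. The main obstacle, as usual in the Orlicz (non-reflexive, nonstandard-growth) setting, will be the two limit passages in the subgradient inequality: verifying that $g=\partial_\lambda f$ evaluated on the oscillating test gradients is genuinely bounded in $L^{\widetilde\Phi}(Q\times\Omega)^N$ and converges strongly two-scale — this needs both $\Delta_2$ and $\Delta'$ for $\widetilde\Phi$ to control $\widetilde\Phi(|g|)\lesssim \Phi(|\lambda|)+1$ — and that the $L^\Phi$–$L^{\widetilde\Phi}$ two-scale pairing actually passes to the limit, which is exactly the Orlicz refinement of the classical $L^p$–$L^{p'}$ two-scale product lemma and the one genuinely new technical input beyond transcribing the Lebesgue proof.
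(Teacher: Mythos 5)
Your overall architecture coincides with the paper's: a priori bounds from (H$_3$) plus Theorem \ref{lem29} to extract the two-scale limits $(u_0,u_1,u_2)$, a convexity-based lower bound, an upper bound obtained by testing the minimality of $u_\epsilon$ against oscillating functions $\psi_0+\epsilon\psi_1^\epsilon+\epsilon^2\psi_2^\epsilon$ (this is exactly the paper's Corollary \ref{lem24} followed by density of $F^\infty_0$ in $\mathbb{F}^1_0L^\Phi$), and finally uniqueness of the limit minimizer plus the subsequence principle. The upper-bound half and the concluding chain of inequalities are essentially identical to the paper's.

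The genuine gap is in your lower bound. You apply the subgradient inequality directly to $f$ with $g=\partial_\lambda f$, but under (H$_1$)--(H$_4$) the integrand is only convex in $\lambda$, not differentiable: $\partial_\lambda f(x,\omega,y,\cdot)$ exists only for a.e.\ $\lambda$, and the composition $g(x,T(x/\epsilon)\omega,x/\epsilon^2,D\psi_\epsilon(x,\omega))$ may land on the exceptional set on a set of positive measure; more seriously, even where $g$ exists it is in general not continuous in $(y,\lambda)$, so the claim that $g(\cdot,\cdot,\cdot,D\psi_\epsilon)$ converges \emph{strongly} two-scale in $L^{\widetilde\Phi}$ --- which is precisely what you need to pass to the limit in the pairing with the weakly two-scale convergent $Du_\epsilon-D\psi_\epsilon$ --- is unjustified. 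You flag this yourself as ``the main obstacle'' but do not resolve it. The paper resolves it by mollifying $f$ in $\lambda$ into $f_n=\theta_n\ast_\lambda f$, which satisfies $(H_5)_n$ (a genuine $C^1$ integrand with $|\partial f_n/\partial\lambda|\le c_6(1+\phi(|\lambda|))$), proving the liminf inequality for each $f_n$ up to an error $C'/n$ (Proposition \ref{conseq1}), and then letting $n\to\infty$ via the $L^1$-convergence $f_n(\cdot,\cdot,\mathbf v)\to f(\cdot,\cdot,\mathbf v)$ to obtain Corollary \ref{lem32}. Without this regularization step (or an equivalent device such as a measurable selection of the subdifferential together with a Minty-type or Moreau--Yosida argument), the subgradient inequality cannot be passed to the limit as written, so your proof of $\liminf F_\epsilon(u_\epsilon)\ge\mathcal F(\mathbf u)$ is incomplete.
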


\subsection{Organization of paper}

The paper is divided into sections each revolving around a specific aspect. Section \ref{sect2} dwells on General notations that we will used throughout this paper. Section \ref{sect3} is devoted to the stochastic two-scale convergence method in Orlicz-Sobolev spaces, and the proofs of two compactness results. Finally, in Section \ref{sect4}, we first sketch out the existence and uniqueness of minimizers for (\ref{ras2}). Next, we establish the preliminary convergence results for \eqref{ras1} and to end, we regularize the integrand $f$ in order to prove the main homogenization result.


\section{General notations}\label{sect2}
Throughout this paper, we refer to this section for the used notations. All the vector spaces are assumed to be real vector spaces, and the scalar functions are assumed real values. For more details, we refer to \cite{adams,allair1,franck,tacha1,tacha3,zand}.
\begin{itemize}
	\item 	The letter $E$ denote any ordinary sequence $E=(\epsilon_{n})$ (integers $n\geq 0$) with $0 \leq \epsilon_{n} \leq 1$ and $\epsilon_{n} \rightarrow 0$ as $n\rightarrow \infty$. Such a sequence will be referred to as a \textit{fundamental sequence}.
	\item $Q$ is a bounded open set of $\mathbb{R}^{N}$ and $Y=[0,1]^{N}$ the unit cube of $\mathbb{R}^{N}$, integer $N>1$.
	\item	$D$ or $D_{x}$ (respectively $D_{y}$) denote the (classical) gradient operator on $Q$ (respectively on $Y$). 
	\item	$\textup{div}$ (respectively $\textup{div}_{y}$) the (classical) divergence operator on $\Omega$ (respectively on $Y$).
	\item $\mathcal{K}(Q) $ is the vector space of continuous functions with compact support in $Q$.
	\item $\mathcal{D}(Q)=\mathcal{C}^{\infty}_{0}(Q)$ is the vector space of smooth functions with compact support in $Q$. 
	\item $\mathcal{C}^{\infty}(Q)$ is the vector space of smooth functions on $Q$. 
	\item $\Phi: [0,\infty)\to[0,\infty)$ is a Young function, i.e. $\Phi$ is continuous, convex, with $\Phi(t)>0$ for $t>0$, $\frac{\Phi(t)}{t}\to 0$ as $t\to 0$, and $\frac{\Phi(t)}{t}\to \infty$ as $t\to \infty$.
	\item $\widetilde{\Phi}$ stands for the complementary (or conjugate) of the Young function $\Phi$, defined by 
	\begin{equation*}
		\widetilde{\Phi}(t)= \sup_{s\geq 0} \big\{st - \Phi(s), \; t\geq 0 \big\}.
	\end{equation*} 
	\item  $\Phi$ is said of class $\Delta_{2}$ at $\infty$ (denoted $\Phi\in \Delta_{2}$) if there are $\alpha>0$ and $t_{0}\geq 0$ such that 
	\begin{equation*}\label{Dela2alpha}
		\Phi(2t) \leq \alpha \Phi(t),\; \textup{for\,all}\, t\geq t_{0}.
	\end{equation*}
	\item  $\Phi$ is of class $\Delta^{\prime}$ at $\infty$ (denoted $\Phi\in \Delta^{\prime}$) if there are $k>0$, $t_{0}\geq 0$ and $s_{0}\geq 0$ such that 
	\begin{equation*}\label{Deltaprime}
		\Phi(ts) \leq k \Phi(t)\Phi(s),\; \textup{for\,all}\, t\geq t_{0} \text{ and } s\geq s_{0}.
	\end{equation*}
	\item $(\Omega, \mathscr{M}, \mu)$ is a measure space with probability measure $\mu$. 
	\item $L^{\Phi}(\Omega)$ is the Orlicz space of functions defined by 
	\begin{equation*}
		L^{\Phi}(\Omega) = \left\{ u: \Omega\to \mathbb{R}\,;\, u\,\textup{is\,measurable},\;   \int_{\Omega}\left(\frac{|u(x)|}{\delta}\right) d\mu < +\infty \text{ for some } \delta>0\right\}.
	\end{equation*}
	\item  $L^{\Phi}(\Omega)$ is a Banach space with respect to the Luxemburg norm 
	\begin{equation*}
		\|u\|_{\Phi,\Omega} = \|u\|_{L^{\Phi}(\Omega)} = \inf \left\{ \delta>0\,:\, \int_{\Omega}\Phi\left(\dfrac{|u(x)|}{\delta}\right)d\mu \leq 1 \right\}.
	\end{equation*}
	\item We assume that $L^{\Phi}(\Omega)$ is separable and reflexive.  Then, the dual of $L^{\Phi}(\Omega)$ is identified with $L^{\widetilde{\Phi}}(\Omega)$.
	\item When $\Omega=Q$, we denote by $W^{1}L^{\Phi}(Q)$ the Orlicz-Sobolev space defined by
	\begin{equation*}
		W^{1}L^{\Phi}(Q) = \left\{ u \in L^{\Phi}(Q)\,;\, \dfrac{\partial u}{\partial x_{i}} \in L^{\Phi}(Q),\, 1\leq i\leq N \right\},
	\end{equation*}
	where derivatives are taken in the distributional sense.
	\item  When $\Phi, \widetilde{\Phi} \in \Delta_{2}$,  $W^{1}L^{\Phi}(Q)$  is a reflexive Banach space  with the norm 
	\begin{equation*}
		\|u\|_{W^{1}L^{\Phi}(Q)} = \|u\|_{L^{\Phi}(Q)} + \sum_{i=1}^{N} \left\|\dfrac{\partial u}{\partial x_{i}} \right\|_{L^{\Phi}(Q)}, \; u \in W^{1}L^{\Phi}(Q).
	\end{equation*}
	\item $W^{1}_{0}L^{\Phi}(Q)$ denotes the closure of $\mathcal{D}(\Omega)$ in $W^{1}L^{\Phi}(Q)$ and the semi-norm 
	\begin{equation*}
		u \longrightarrow \|u\|_{W^{1}_{0}L^{\Phi}(Q)} = \|Du\|_{L^{\Phi}(Q)} =  \sum_{i=1}^{N} \left\|\dfrac{\partial u}{\partial x_{i}} \right\|_{L^{\Phi}(Q)}
	\end{equation*}
	is a norm (of gradient) on $W^{1}_{0}L^{\Phi}(Q)$ equivalent to $\|\cdot\|_{W^{1}L^{\Phi}(Q)}$.
	\item $\mathcal{M}_{Y}(u) = \frac{1}{vol(Y)}\int_{Y}u(y) dy$ is the mean value of an element $u \in L^{1}(Y)$.  
	\item	If $F(\mathbb{R}^{N})$ is a given function space, we denote by $F_{per}(Y)$ the space of functions in $F_{loc}(\mathbb{R}^{N})$ that are $Y$-periodic. 
	\item  As special cases, $\mathcal{D}_{per}(Y)$ denotes the space $\mathcal{C}^{\infty}_{per}(Y)$ while $\mathcal{D}_{\#}(Y)$ stands for the space of those functions in $\mathcal{D}_{per}(Y)$ with mean value zero.
	\item $\mathcal{D}'_{per}(Y)$ stands for the topological dual of $\mathcal{D}_{per}(Y)$ which can be identified with the space of periodic distributions in $\mathcal{D}'(\mathbb{R}^{N})$.
	\item  Also, $W^{1}_{\#}L^{\Phi}_{per}(Y)$ stands for the space of those functions in $W^{1}L^{\Phi}_{per}(Y)$ with mean value zero. It is a same definition for $L^{\Phi}_{\#}(Y)$.
	\item  $\{ T(x), \, x \in \mathbb{R}^{N} \}$ is a fixed $N$-dimensional dynamical system on $\Omega$, with respect to a fixed probability measure $\mu$ on $\Omega$ invariant for $T$. 
	\item	$I^{\Phi}_{nv}(\Omega)$ is the set of all $T$-invariant functions in $L^{\Phi}(\Omega)$.
	\item	$D_{i,\Phi} : L^{\Phi}(\Omega)\rightarrow L^{\Phi}(\Omega)$, $1 \leq i \leq N$, is the $i$-th stochastic derivative operator.
	\item  $D^{\alpha}_{\Phi}= D^{\alpha_{1}}_{1,\Phi}\circ \cdots \circ D^{\alpha_{N}}_{N,\Phi}$, $\alpha=(\alpha_{1}, \cdot, \alpha_{N}) \in \mathbb{N}^{N}$ is the higher order stochastic derivatives. 
	\item	$D_{\omega}=D_{\omega,\Phi} = (D_{1,\Phi}, \cdots, D_{N,\Phi})$ is the stochastic gradient operator on $\Omega$.
	\item $\mathcal{C}^{\infty}(\Omega)$ is the space of functions $f\in L^{\Phi}(\Omega)$, whose each element possesses stochastic derivatives of any order that are bounded.
	\item we denote by $W^{1}L^{\Phi}(\Omega)$ the Orlicz-Sobolev space defined by
	\begin{equation*}
		W^{1}L^{\Phi}(\Omega) = \left\{ u \in L^{\Phi}(\Omega)\,;\, D_{i,\Phi}u \in L^{\Phi}(\Omega),\, 1\leq i\leq N \right\},
	\end{equation*}
	where derivatives are taken in the distributional sense.
	\item  We denote $W^{1}_{\#}L^{\Phi}(\Omega)$ the completion of $\mathcal{C}^{\infty}(\Omega)$ with respect to the seminorm $\|u\|_{\# ,\Phi} = \sum_{i=1}^{N} \|D_{i,\Phi}u\|_{L^{\Phi}(\Omega)}$ and $I_{\Phi} : \mathcal{C}^{\infty}(\Omega) \hookrightarrow W^{1}_{\#}L^{\Phi}(\Omega)$ the canonical mapping.
	\item $\overline{D}_{\omega}=\overline{D}_{\omega,\Phi} = (\overline{D}_{1,\Phi}, \cdots, \overline{D}_{N,\Phi}) : W^{1}_{\#}L^{\Phi}(\Omega) \rightarrow L^{\Phi}(\Omega)^{N}$ denotes the stochastic gradient on $W^{1}_{\#}L^{\Phi}(\Omega)$.
	\item The operator  $\textup{div}_{\omega,\widetilde{\Phi}} : L^{\widetilde{\Phi}}(\Omega)^{N} \rightarrow (W^{1}_{\#}L^{\Phi}(\Omega))'$ is defined by:
	\begin{equation*}
		\langle \textup{div}_{\omega,\widetilde{\Phi}} u \, , \, v \rangle = - \langle u\, , \, \overline{D}_{\omega,\Phi}v \rangle, \quad \forall u \in L^{\widetilde{\Phi}}(\Omega)^{N} \; \textup{and} \; \forall v \in W^{1}_{\#}L^{\Phi}(\Omega).
	\end{equation*}
	\item The Orlicz-Sobolev space $W^{1}L^{\Phi}_{D_{x}}(Q; L^{\Phi}(\Omega))$ is defined by
	\begin{equation}\label{c1orli1S}
		W^{1}L^{\Phi}_{D_{x}}(Q; L^{\Phi}(\Omega)) = \left\{  v \in L^{\Phi}(Q\times\Omega) \, : \, \frac{\partial v}{\partial x_{i}} \in L^{\Phi}(Q\times\Omega), \; 1 \leq i \leq N \right\},
	\end{equation}
	where derivatives are taken in the distributional sense.  This is a reflexive Banach space with the norm:
	\begin{equation*}
		\lVert v \rVert_{W^{1}L^{\Phi}_{D_{x}}(Q; L^{\Phi}(\Omega))} = \lVert v \rVert_{L^{\Phi}(Q\times\Omega)} + \sum_{i=1}^{n} \lVert D_{x_{i}} v \rVert_{L^{\Phi}(Q\times\Omega)}\, ; \; v \in W^{1}L^{\Phi}_{D_{x}}(Q; L^{\Phi}(\Omega)).
	\end{equation*}
	\item $W^{1}_{0}L^{\Phi}_{D_{x}}(Q; L^{\Phi}(\Omega))$ is the set of functions in $W^{1}L^{\Phi}_{D_{x}}(Q; L^{\Phi}(\Omega))$ with zero boundary condition on $Q$. This is a reflexive Banach space with the norm: 
	\begin{equation*}
		\lVert v \rVert_{W^{1}_{0}L^{\Phi}_{D_{x}}(Q; L^{\Phi}(\Omega))} =  \sum_{i=1}^{n} \lVert D_{x_{i}} v \rVert_{L^{\Phi}(Q\times\Omega)} \;\, ; \quad v \in W^{1}_{0}L^{\Phi}_{D_{x}}(Q; L^{\Phi}(\Omega)).
	\end{equation*}
	\item To end this section, we give some examples of Young functions satisfying $\Delta_{2}$ and $\Delta'$ conditions.	The function $t \rightarrow \frac{t^{p}}{p}$, ($p > 1$) is a Young function satisfying $\Delta_{2}\cap \Delta'$-condition. Its  conjugate is a Young function $t \rightarrow \frac{t^{q}}{q}$, where $\frac{1}{p} + \frac{1}{q} = 1$. The function $t \rightarrow t^{p}\ln(1+t)$, ($p \geq 1$) is a Young function satisfying $\Delta_{2}$-condition, while the Young function $t \rightarrow t^{\ln t}$ and $t \rightarrow e^{t^{r}} - 1$, ($r >0$) are not of class $\Delta_{2}$. However, for $r=1$, the Young function $t \rightarrow e^{t} - 1$ satisfy $\Delta'$-condition. 
\end{itemize}


\section{Stochastic two-scale convergence in Orlicz-Sobolev spaces}\label{sect3}

The stochastic-periodic homogenization of functional $F_{\epsilon}$ as defined in (\ref{ras1}), amounts to find a homogenized functional $F$  such that the sequence of minimizers $u_{\epsilon}$ converges to a limit $u$, which is precisely the minimizer of $F$. For this purpose, we have need to extend the \textit{stochastic two-scale convergence} (see, \cite{sango}) to the Orlicz setting.

\subsection{Fundamentals of stochastic-periodic homogenization}

Let us recall that (see, e.g., \cite[Page 185]{tacha1}), putting
\begin{eqnarray*}
	L^{\Phi}(Q\times\Omega\times Y_{per}) = \bigg\{ u \in L^{\Phi}_{loc}(Q\times\Omega\times \mathbb{R}^{N}_{y}) : \, \textup{a.e. \, in} \; (x,\omega) \in Q\times\Omega, \\
	u(x,\omega,\cdot) \,\, \textup{is} \, Y-\textup{periodic} \bigg\},
\end{eqnarray*}
the embedding $L^{\Phi}(Q\times\Omega ; \mathcal{C}_{per}(Y)) \hookrightarrow L^{\Phi}(Q\times\Omega\times Y_{per})$ is continuous with density.
In all this section, we assume that $\Phi$ and $\widetilde{\Phi}$ satisfy the $\Delta_{2}$-condition.
We can now define the stochastic two-scale convergence in the Orlicz space. 
\begin{definition}(stochastic 2-scale convergence in $L^{\Phi}$-spaces)\label{lem36} \\
	Let $\Phi$ be a Young function. A bounded sequence $(u_{\epsilon})$ in $ L^{\Phi}(Q\times\Omega)$ is said to weakly stochastically 2-scale converge to $u_{0} \in  L^{\Phi}(Q\times\Omega\times Y_{per})$, if as $\epsilon \to 0$ we have,  
	\begin{eqnarray}\label{lem35}
		\lim_{\epsilon \to 0} \iint_{Q\times \Omega} u_{\epsilon}(x,\omega) f\left(x, T\left(\frac{x}{\epsilon}\right)\omega,\frac{x}{\epsilon^{2}}\right) dx\, d\mu  \nonumber \\ = \iiint_{Q\times \Omega\times Y} u_{0}(x,\omega, y) f(x,\omega, y) dx\, d\mu dy
	\end{eqnarray}
	for every $f \in \mathcal{C}^{\infty}_{0}(Q)\otimes\mathcal{C}^{\infty}(\Omega)\otimes\mathcal{C}^{\infty}_{per}(Y)$. 
	We express this by writing  $u_{\epsilon} \;\rightarrow\; u_{0}$ stoch. in $ L^{\Phi}(Q\times\Omega)$-weak 2s.
\end{definition}
We recall that $\mathcal{C}^{\infty}_{0}(Q)\otimes\mathcal{C}^{\infty}(\Omega)\otimes\mathcal{C}^{\infty}_{per}(Y)$ is the space of functions of the form, 
\begin{equation}
	f(x, \omega, y) = \sum_{finite} \varphi_{i}(x)\psi_{i}(\omega)g_{i}(y), \quad (x,\omega,y) \in Q\times \Omega\times \mathbb{R}^{N},
\end{equation}
with $\varphi_{i} \in \mathcal{C}^{\infty}_{0}(Q)$, $\psi_{i} \in \mathcal{C}^{\infty}(\Omega)$ and $g_{i} \in \mathcal{C}^{\infty}_{per}(Y)$. Such functions are dense in $\mathcal{C}^{\infty}_{0}(Q) \otimes L^{\widetilde{\Phi}}(\Omega)\otimes\mathcal{C}^{\infty}_{per}(Y)$, since $\mathcal{C}^{\infty}(\Omega)$ is dense in $L^{\widetilde{\Phi}}(\Omega)$ and hence in $\mathcal{K}(Q; L^{\widetilde{\Phi}}(\Omega))\otimes\mathcal{C}^{\infty}_{per}(Y)$, where $\mathcal{K}(Q; L^{\widetilde{\Phi}}(\Omega))$ is the space of continuous functions from $Q$ to $L^{\widetilde{\Phi}}(\Omega)$ with compact support contained in $Q$. As $\mathcal{K}(Q; L^{\widetilde{\Phi}}(\Omega))$ is dense in $L^{\widetilde{\Phi}}(Q\times\Omega)$ and $L^{\widetilde{\Phi}}(Q\times\Omega)\otimes\mathcal{C}^{\infty}_{per}(Y)$ is dense in $L^{\widetilde{\Phi}}(Q\times\Omega ; \mathcal{C}_{per}(Y))$, the uniqueness of the stochastic two-scale limit is ensured.
\begin{remark}(Inspired by \cite{franck,sango})\label{lem16} 
	\begin{enumerate}
		\item[$R_{1})$]Assume that a sequence $(u_{\epsilon})$ in $ L^{\Phi}(Q\times\Omega)$ is weakly stochastically 2-scale convergent to $u_{0} \in  L^{\Phi}(Q\times\Omega\times Y_{per})$. Then since $\mathcal{C}^{\infty}_{0}(Q)\otimes\mathcal{C}^{\infty}(\Omega)\otimes \mathcal{C}^{\infty}_{per}(Y)$ is dense in $\mathcal{C}(Q; L^{\infty}(\Omega ; L^{\infty}_{per}(Y)))$, then (\ref{lem35}) holds for every $f \in \mathcal{C}(Q; L^{\infty}(\Omega ; L^{\infty}_{per}(Y)))$. 
		\item[$R_{2})$]  In the Definition \ref{lem36},  if we take $\Omega = Y \subset \mathbb{R}^{N}$, $d\mu = dy$, and that we consider the $N$-dimensional dynamical system of translation $\{ T(x) : Y\rightarrow Y \; ; \; x \in \mathbb{R}^{N} \}$ defined by $T(x)y = (x+y)\text{mod }\mathbb{Z}^{N}$, we obtain the framework to which reiterated periodic two-scale convergence is defined as in \cite{tacha3}.  
	\end{enumerate}
\end{remark}
Before proceeding with our study, we need to make a comparison between the stochastic two-scale convergence in Orlicz setting and  other existing convergence methods close to it. To achieve this, we must first state these convergence schemes: 
\begin{enumerate}
	\item[(1)]  A sequence $(u_{\epsilon})$ in $ L^{\Phi}(Q)$ is said to weakly 2-scale converge to $v_{0} \in  L^{\Phi}(Q\times Y_{per})$, if as $\epsilon \to 0$ we have,  
	\begin{equation}\label{lem60}
		\lim_{\epsilon \to 0} \int_{Q} u_{\epsilon}(x) f\left(x, \frac{x}{\epsilon}\right) dx = \iint_{Q\times Y} v_{0}(x,y) f(x,y) dx\,dy
	\end{equation}
	for every $f \in \mathcal{C}^{\infty}_{0}(Q)\otimes\mathcal{C}^{\infty}_{per}(Y)$, (see \cite{tacha1}).
	\item[(2)]  A sequence $(u_{\epsilon})$ in $ L^{\Phi}(Q\times\Omega)$ is said to weakly stochastically 2-scale converge in the mean to $v_{0} \in  L^{\Phi}(Q\times\Omega)$, if as $\epsilon \to 0$ we have,  
	\begin{equation}\label{lem61}
		\lim_{\epsilon \to 0} \iint_{Q\times \Omega} u_{\epsilon}(x,\omega) f\left(x, T\left(\frac{x}{\epsilon}\right)\omega\right) dx\, d\mu = \iint_{Q\times \Omega} v_{0}(x,\omega) f(x,\omega) dx\,d\mu
	\end{equation}
	for every $f \in \mathcal{C}^{\infty}_{0}(Q)\otimes\mathcal{C}^{\infty}(\Omega)$, (see \cite{franck}). 
\end{enumerate}
It is very important to note that both of the above definitions (\ref{lem60}) and (\ref{lem61}) imply the boundedness of the sequence $(u_{\epsilon})$ either in $ L^{\Phi}(Q)$ or in $ L^{\Phi}(Q\times\Omega)$, accordingly. With such considerations in mind, we see that the stochastic two-scale convergence method generalizes the above two convergence methods. Indeed, if in (\ref{lem35}) we take $f \in \mathcal{C}^{\infty}_{0}(Q)\otimes\mathcal{C}^{\infty}(\Omega)$, that is $f$ is constant with respect to $y \in \mathbb{R}^{N}$, and next using the density of the latter space in $L^{\widetilde{\Phi}}(Q\times\Omega)$, then (\ref{lem35}) reads as (\ref{lem61}) with $v_{0}(x,\omega) = \int_{Y} u_{0}(x,\omega,y)dy$. If besides we take in $f \in \mathcal{C}^{\infty}_{0}(Q)\otimes\mathcal{C}^{\infty}_{per}(Y)$, that is $f$ not depending upon the random variable $\omega$ and further if we choose $u_{\epsilon}$ not depending on $\omega$, then using the density of $\mathcal{C}^{\infty}_{0}(Q)\otimes\mathcal{C}^{\infty}_{per}(Y)$ in $L^{\widetilde{\Phi}}(Q ; \mathcal{C}_{per}(Y))$, we readily get (\ref{lem60}) with $v_{0}(x,y)=\int_{\Omega}u_{0}(x,\omega,y)d\mu$. 
\par The following lemma  is inspired by Sango-Woukeng paper \cite[Lemma 1 and Remark 2]{sango}.
It is very important for the proof of  second Compactness Theorem (see Subsection \ref{subsec2}).
\begin{lemma}\label{lemc1}
	Let $(u_{\epsilon})$ be a sequence in $L^{\Phi}(Q\times \Omega)$ which stochastically weakly two-scale converges towards $u_{0} \in L^{\Phi}(Q\times\Omega\times Y_{per})$. Let the sequence $(v_{\epsilon})$ be defined by 
	\begin{equation*}
		v_{\epsilon}(x,\omega) = \int_{B_{r}} u_{\epsilon}(x+\epsilon^{2}\rho, \omega) d\rho, \quad (x,\omega) \in Q\times\Omega,
	\end{equation*}
	$B_{r}$ being the open ball of $\mathbb{R}^{N}$ centered at $O$ (origin of $\mathbb{R}^{N}$) and of radius $r>0$. Then, as $\epsilon \to 0$, 
	\begin{equation}\label{lol1}
		v_{\epsilon} \rightarrow v_{0} \; \textup{stoch.\, in}\; L^{\Phi}(Q\times\Omega)-weak\, 2s,
	\end{equation}
	where $v_{0}$ is defined by $v_{0}(x,\omega,y) = \int_{B_{r}} u_{0}(x,\omega, y+\rho) d\rho$, for $(x,\omega,y)\in Q\times\Omega\times\mathbb{R}^{N}$. Moreover, it follows from (\ref{lol1}) that, as $\epsilon \to 0$, 
	\begin{equation*}\label{dedu1}
		\dfrac{1}{|B_{\epsilon^{2}r}|} \int_{B_{\epsilon^{2}r}} u_{\epsilon}(x+\rho, \omega) d\rho \rightarrow \dfrac{1}{|B_{r}|} v_{0} \;\; \textup{stoch.\, in}\; L^{\Phi}(Q\times\Omega)-weak\, 2s.
	\end{equation*}
\end{lemma}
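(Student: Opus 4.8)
The plan is to verify the defining relation (\ref{lem35}) for $v_\epsilon$ against an arbitrary test function; by bilinearity and by the density properties recorded after Definition~\ref{lem36}, it suffices to take $f(x,\omega,y)=\varphi(x)\psi(\omega)g(y)$ with $\varphi\in\mathcal{C}^\infty_0(Q)$, $\psi\in\mathcal{C}^\infty(\Omega)$, $g\in\mathcal{C}^\infty_{per}(Y)$. The first step is to record that $(v_\epsilon)$ is bounded in $L^\Phi(Q\times\Omega)$ (so that ``$v_\epsilon\to v_0$ stoch.\ weak $2s$'' even makes sense, and likewise that $v_0\in L^\Phi(Q\times\Omega\times Y_{per})$): writing $v_\epsilon=|B_r|\cdot\frac{1}{|B_r|}\int_{B_r}u_\epsilon(x+\epsilon^2\rho,\omega)\,d\rho$, applying Jensen's inequality to the convex function $\Phi$ and the probability measure $\frac{1}{|B_r|}\,d\rho$ on $B_r$, integrating over $Q\times\Omega$ and using Fubini together with the translation invariance of the Lebesgue integral (extending $u_\epsilon$ by $0$ outside $Q$), one gets $\iint_{Q\times\Omega}\Phi\big(|v_\epsilon|/(|B_r|\,\|u_\epsilon\|_{\Phi,Q\times\Omega})\big)\,dx\,d\mu\le 1$, hence $\|v_\epsilon\|_{\Phi,Q\times\Omega}\le|B_r|\,\|u_\epsilon\|_{\Phi,Q\times\Omega}$; the same computation on the period cell gives the corresponding bound for $v_0$.

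The second step is the change of variables. Starting from $\iint_{Q\times\Omega}v_\epsilon(x,\omega)f\big(x,T(x/\epsilon)\omega,x/\epsilon^2\big)\,dx\,d\mu$, Fubini pulls $\int_{B_r}d\rho$ outside, and in the $x$-integral one substitutes $x'=x+\epsilon^2\rho$. Because $\varphi$ is supported in a compact $K\subset Q$, for $\epsilon$ small the integrand vanishes for $x'\notin Q$, so the domain stays $Q\times\Omega$ and
\begin{equation*}
\iint_{Q\times\Omega}v_\epsilon\,f\Big(x,T\big(\tfrac{x}{\epsilon}\big)\omega,\tfrac{x}{\epsilon^2}\Big)dx\,d\mu=\int_{B_r}\iint_{Q\times\Omega}u_\epsilon(x',\omega)\,f\Big(x'-\epsilon^2\rho,\,T\big(\tfrac{x'-\epsilon^2\rho}{\epsilon}\big)\omega,\,\tfrac{x'-\epsilon^2\rho}{\epsilon^2}\Big)dx'\,d\mu\,d\rho.
\end{equation*}
Using the group law of the dynamical system, $T\big((x'-\epsilon^2\rho)/\epsilon\big)=T(-\epsilon\rho)\circ T(x'/\epsilon)$, together with $(x'-\epsilon^2\rho)/\epsilon^2=x'/\epsilon^2-\rho$, the shifted test function equals $\varphi(x'-\epsilon^2\rho)\,\psi\big(T(-\epsilon\rho)T(x'/\epsilon)\omega\big)\,g\big(x'/\epsilon^2-\rho\big)$.

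The third step, which I expect to be the main obstacle, is to show that this expression is, \emph{uniformly} in $(x',\omega)\in Q\times\Omega$ and $\rho\in B_r$, equal to $\widetilde f_\rho\big(x',T(x'/\epsilon)\omega,x'/\epsilon^2\big)+o(1)$ as $\epsilon\to0$, where $\widetilde f_\rho(x,\omega,y):=\varphi(x)\psi(\omega)g(y-\rho)$ again lies in $\mathcal{C}^\infty_0(Q)\otimes\mathcal{C}^\infty(\Omega)\otimes\mathcal{C}^\infty_{per}(Y)$. The factor $\varphi(x'-\epsilon^2\rho)=\varphi(x')+O(\epsilon^2)$ is harmless and $g(x'/\epsilon^2-\rho)$ is precisely the new periodic profile $y\mapsto g(y-\rho)$; the only delicate term is $\psi\big(T(-\epsilon\rho)\eta\big)-\psi(\eta)$ with $\eta=T(x'/\epsilon)\omega$. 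Since $\psi\in\mathcal{C}^\infty(\Omega)$, for a.e.\ $\omega$ the realization $x\mapsto\psi(T(x)\omega)$ is $C^1$ with gradient $\big((D_{i,\Phi}\psi)(T(x)\omega)\big)_i$ bounded by $\max_i\|D_{i,\Phi}\psi\|_{L^\infty(\Omega)}$; applying the mean value theorem on the segment from $x'/\epsilon-\epsilon\rho$ to $x'/\epsilon$ yields $\big|\psi(T(-\epsilon\rho)\eta)-\psi(\eta)\big|\le\epsilon\,r\sqrt{N}\,\max_i\|D_{i,\Phi}\psi\|_{L^\infty(\Omega)}$, which is the required $o(1)$ bound in $L^\infty(Q\times\Omega)$. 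Granting this, the $o(1)$-contribution is controlled by Hölder's inequality in Orlicz spaces, the boundedness of $\|u_\epsilon\|_{\Phi,Q\times\Omega}$, and the embedding $L^\infty(Q\times\Omega)\hookrightarrow L^{\widetilde\Phi}(Q\times\Omega)$ (finite measure), and tends to $0$ uniformly in $\rho$; meanwhile the stochastic two-scale convergence of $(u_\epsilon)$ applied to $\widetilde f_\rho$ gives, for each fixed $\rho$,
\begin{equation*}
\iint_{Q\times\Omega}u_\epsilon\,\widetilde f_\rho\Big(x',T\big(\tfrac{x'}{\epsilon}\big)\omega,\tfrac{x'}{\epsilon^2}\Big)dx'\,d\mu\;\longrightarrow\;\iiint_{Q\times\Omega\times Y}u_0(x,\omega,y)\,\varphi(x)\psi(\omega)g(y-\rho)\,dx\,d\mu\,dy.
\end{equation*}

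Finally, the $\rho$-integrands are bounded uniformly in $\epsilon$ and $\rho$, so dominated convergence over $B_r$ is legitimate, and it remains to rewrite the limit. For fixed $(x,\omega)$, substituting $y\mapsto y+\rho$ on the period $Y$ and using the $Y$-periodicity of $u_0(x,\omega,\cdot)$ and of $g$ gives $\int_{B_r}\int_Y u_0(x,\omega,y)g(y-\rho)\,dy\,d\rho=\int_Y\big(\int_{B_r}u_0(x,\omega,y+\rho)\,d\rho\big)g(y)\,dy=\int_Y v_0(x,\omega,y)g(y)\,dy$, so the limit equals exactly $\iiint_{Q\times\Omega\times Y}v_0\,f\,dx\,d\mu\,dy$, which proves (\ref{lol1}). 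For the last assertion, substituting $\rho=\epsilon^2\sigma$ and using $|B_{\epsilon^2r}|=\epsilon^{2N}|B_r|$ shows $\frac{1}{|B_{\epsilon^2r}|}\int_{B_{\epsilon^2r}}u_\epsilon(x+\rho,\omega)\,d\rho=\frac{1}{|B_r|}v_\epsilon(x,\omega)$, so the claim follows immediately from (\ref{lol1}) by linearity of the stochastic two-scale limit.
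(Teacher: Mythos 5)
Your proof is correct and follows the same overall skeleton as the paper's: Fubini to pull $\int_{B_r}d\rho$ outside, the substitution $x'=x+\epsilon^{2}\rho$ combined with the group law of $T$ to turn the shifted integrand into the original sequence tested against the translated profile $g(\cdot-\rho)$, identification of the limit via periodicity ($\int_Y u_0\,g(\cdot-\rho)\,dy=\int_Y u_0(\cdot+\rho)\,g\,dy$), and dominated convergence in $\rho$. You differ from the paper only in how two error terms are controlled. First, for the domain mismatch between $Q$ and $Q-\epsilon^{2}\rho$ you use the compact support of $\varphi$ to kill the boundary contribution outright, whereas the paper splits off the integrals over the symmetric difference $(Q-\epsilon^{2}a)\Delta Q$ and sends them to zero via the uniform integrability in $L^{1}$ of the $L^{\Phi}$-bounded sequence $(u_\epsilon)$; both work, and yours is slightly more economical. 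Second, for the term $\psi(T(-\epsilon\rho)\eta)-\psi(\eta)$ the paper invokes the strong continuity of the group $U(x)$ on $L^{\widetilde\Phi}(\Omega)$ (\cite[Proposition 3]{franck}) to get the Luxemburg norm of the difference to vanish, while you use a pointwise mean value bound on the realizations $z\mapsto\psi(T(z)\omega)$, which requires the (standard, but not stated in this paper) fact that elements of $\mathcal{C}^{\infty}(\Omega)$ have a.s.\ $C^{1}$ realizations whose classical gradient is the realization of the stochastic gradient; if you want to stay strictly within the tools the paper quotes, the strong-continuity route is the safer citation. Your preliminary Jensen estimate $\|v_\epsilon\|_{\Phi,Q\times\Omega}\le |B_r|\,\|u_\epsilon\|_{\Phi,Q\times\Omega}$ is a worthwhile addition the paper leaves implicit, since the definition of weak stochastic two-scale convergence presupposes boundedness of the sequence.
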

\begin{proof}
	Let $\varphi \in \mathcal{C}^{\infty}_{0}(Q)$, $f \in \mathcal{C}^{\infty}(\Omega)$ and $g \in \mathcal{C}_{per}(Y)$. One has
	\begin{equation*}
		\begin{array}{l}
			\displaystyle	\iint_{Q\times \Omega} \left( \int_{B_{r}} u_{\epsilon}(x+ \epsilon^{2}\rho , \omega) d\rho \right) \varphi(x) f\left(T\left(\frac{x}{\epsilon}\right)\omega \right) g\left(\frac{x}{\epsilon^{2}}\right) dx d\mu  \\
			= \displaystyle \int_{B_{r}}	 \left( \iint_{Q\times \Omega} u_{\epsilon}(x+ \epsilon^{2}\rho , \omega) \varphi(x) f\left(T\left(\frac{x}{\epsilon}\right)\omega \right) g\left(\frac{x}{\epsilon^{2}}\right) dx d\mu \right) d\rho. 
		\end{array}
	\end{equation*}
	In view of the Lebesgue Dominated Convergence Theorem, (\ref{lol1}) will be checked as soon as we show that for each fixed $\rho \in \mathbb{R}^{N}$ one has, as $E\ni \epsilon \to 0$,
	\begin{equation*}
		\begin{array}{l}
			\displaystyle	\iint_{Q\times \Omega}  u_{\epsilon}(x+ \epsilon^{2}\rho , \omega) \varphi(x) f\left(T\left(\frac{x}{\epsilon}\right)\omega \right) g\left(\frac{x}{\epsilon^{2}}\right) dx d\mu  \\
			\rightarrow	 \displaystyle  \iint_{Q\times \Omega}\int_{Y} (\tau_{-\rho}u_{0})(x , \omega, y) \varphi(x) f\left(\omega \right) g\left(y\right) dydx d\mu, 
		\end{array}
	\end{equation*}
	where the function $\tau_{-\rho}u_{0}$ is defined by $(\tau_{-\rho}u_{0})(x,\omega,y) = u_{0}(x,\omega, y+\rho)$. So let $a \in \mathbb{R}^{N}$ and let $\varphi$, $f$ and $g$ be as above; then 
	\begin{equation*}
		\begin{array}{l}
			\displaystyle	\iint_{Q\times \Omega}  u_{\epsilon}(x- \epsilon^{2}a , \omega) \varphi(x) f\left(T\left(\frac{x}{\epsilon}\right)\omega \right) g\left(\frac{x}{\epsilon^{2}}\right) dx d\mu  \\
			=	\displaystyle  \iint_{(Q-\epsilon^{2}a)\times \Omega} u_{\epsilon}(x,\omega) \varphi(x+\epsilon^{2}a) f\left(T\left(\frac{x}{\epsilon}+\epsilon a\right)\omega \right) g\left(\frac{x}{\epsilon^{2}}+a\right) dx d\mu \\
			= \displaystyle \iint_{Q\times \Omega} u_{\epsilon}(x,\omega) \varphi(x+\epsilon^{2}a) f\left(T\left(\frac{x}{\epsilon}+\epsilon a\right)\omega \right) g\left(\frac{x}{\epsilon^{2}}+a\right) dx d\mu  \\
			\quad	- \displaystyle \iint_{(Q\backslash Q-\epsilon^{2}a)\times \Omega} u_{\epsilon}(x,\omega) \varphi(x+\epsilon^{2}a) f\left(T\left(\frac{x}{\epsilon}+\epsilon a\right)\omega \right) g\left(\frac{x}{\epsilon^{2}}+a\right) dx d\mu \\
			\quad 	+ \displaystyle \iint_{(Q-\epsilon^{2}a)\backslash Q\times \Omega} u_{\epsilon}(x,\omega) \varphi(x+\epsilon^{2}a) f\left(T\left(\frac{x}{\epsilon}+\epsilon a\right)\omega \right) g\left(\frac{x}{\epsilon^{2}}+a\right) dx d\mu  \\
			= (I) - (II) + (III).
		\end{array}
	\end{equation*}
	As for $(I)$ we have 
	\begin{equation*}
		\begin{array}{rcl}
			(I) & = & \displaystyle \iint_{Q\times \Omega} u_{\epsilon}(x,\omega) \varphi(x) f\left(T\left(\frac{x}{\epsilon}+\epsilon a\right)\omega \right) (\tau_{-a}g)\left(\frac{x}{\epsilon^{2}}\right) dx d\mu  \\
			&  & + \displaystyle \iint_{Q\times \Omega} u_{\epsilon}(x,\omega) [\varphi(x+\epsilon^{2}a)-\varphi(x)] f\left(T\left(\frac{x}{\epsilon}+\epsilon a\right)\omega \right) (\tau_{-a}g)\left(\frac{x}{\epsilon^{2}}\right) dx d\mu \\
			& = & (I_{1}) + (I_{2}).
		\end{array}
	\end{equation*}
	But, 
	\begin{equation*}
		\begin{array}{rcl}
			(I_{1}) & = & \displaystyle \iint_{Q\times \Omega} u_{\epsilon}(x,\omega) \varphi(x) f\left(T\left(\frac{x}{\epsilon} \right)\omega \right) (\tau_{-a}g)\left(\frac{x}{\epsilon^{2}}\right) dx d\mu  \\
			&  & + \displaystyle \iint_{Q\times \Omega} u_{\epsilon}(x,\omega)\varphi(x)(\tau_{-a}g)\left(\frac{x}{\epsilon^{2}}\right) \left[f\left(T\left(\frac{x}{\epsilon}+\epsilon a\right)\omega \right) - f\left(T\left(\frac{x}{\epsilon}\right)\omega \right)  \right]   dx d\mu \\
			& = & (I'_{1}) + (I'_{2}).
		\end{array}
	\end{equation*}
	As for $(I'_{1})$, we have 
	\begin{equation*}
		(I'_{1}) \rightarrow \iint_{Q\times \Omega} \int_{Y} u_{0}(x,\omega,y) \varphi(x) f(\omega) (\tau_{-a}g)(y) dy dx d\mu, \quad \textup{as} \; E \ni \epsilon \to 0.
	\end{equation*}
	But, 
	\begin{equation*}
		\begin{array}{rcl}
			\displaystyle \int_{Y} u_{0}(x,\omega,y)  (\tau_{-a}g)(y) dy & = & \mathcal{M}_{Y}\left( u_{0}(x,\omega,\cdot)  (\tau_{-a}g) \right)  \\ 
			& = &  \mathcal{M}_{Y}\left(\tau_{-a}[\tau_{a} u_{0}(x,\omega,\cdot)g] \right)   \\
			& = &  \mathcal{M}_{Y}\left(\tau_{a} u_{0}(x,\omega,\cdot)g \right)  \\
			& = & \displaystyle \int_{Y} \tau_{a} u_{0}(x,\omega,y)g(y) dy. 
		\end{array}
	\end{equation*}
	For $(I'_{2})$ we have 
	\begin{equation*}
		\begin{array}{lll}
			|(I'_{2})|  &\leq&  2 \|u_{\epsilon}\|_{\Phi,Q\times\Omega}\times \|\varphi\|_{\infty} \times\|g\|_{\infty}\times \\
			& & \left(  \inf\left\{ \delta>0 :  \, \displaystyle \iint_{Q\times \Omega} \widetilde{\Phi}\left( \dfrac{ \left| f\left(T\left(\frac{x}{\epsilon}+\epsilon a\right)\omega \right) - f\left(T\left(\frac{x}{\epsilon}\right)\omega \right) \right| }{\delta}  \right) dx d\mu \leq 1  \right\} \right).
		\end{array}
	\end{equation*}	
	But, 
	\begin{equation*}
		\begin{array}{l}
			\inf\left\{ \delta>0 :  \, \displaystyle \iint_{Q\times \Omega} \widetilde{\Phi}\left( \dfrac{ \left| f\left(T\left(\frac{x}{\epsilon}+\epsilon a\right)\omega \right) - f\left(T\left(\frac{x}{\epsilon}\right)\omega \right) \right| }{\delta}  \right) dx d\mu \leq 1  \right\}  \\
			=  \inf\left\{ \delta>0 :  \, \displaystyle \iint_{Q\times \Omega} \widetilde{\Phi}\left( \dfrac{ \left| \left(U\left(\frac{x}{\epsilon}+\epsilon a\right)f \right)(\omega) - \left(U\left(\frac{x}{\epsilon}\right)f \right)(\omega) \right| }{\delta}  \right) dx d\mu \leq 1  \right\}. 
		\end{array}
	\end{equation*}
	Since the group $U(x)$ is strongly continuous in $L^{\widetilde{\Phi}}(\Omega)$ (see, \cite[Proposition 3]{franck}), we get immediately that 
	\begin{equation*}
		\begin{array}{r}
			\inf\left\{ \delta>0 :  \, \displaystyle \iint_{Q\times \Omega} \widetilde{\Phi}\left( \dfrac{ \left| f\left(T\left(\frac{x}{\epsilon}+\epsilon a\right)\omega \right) - f\left(T\left(\frac{x}{\epsilon}\right)\omega \right) \right| }{\delta}  \right) dx d\mu \leq 1  \right\}  \rightarrow 0, \quad \\ \textup{as} \; \epsilon \to 0.
		\end{array}
	\end{equation*}
	Thus $(I'_{2}) \rightarrow 0$ as $\epsilon \to 0$. Finally, since the sequence $(u_{\epsilon})$ is bounded in $L^{\Phi}(Q\times \Omega)$, this sequence is uniformly integrable in $L^{1}(Q\times \Omega)$. Therefore, the inequality 
	\begin{equation*}
		\begin{array}{l}
			\displaystyle	\iint_{((Q-\epsilon^{2}a)\Delta Q)\times \Omega} |u_{\epsilon}(x,\omega)| |\varphi(x+\epsilon^{2}a)| \left| f\left(T\left(\frac{x}{\epsilon}+\epsilon a\right)\omega \right) \right| \left| g\left(\frac{x}{\epsilon^{2}}+a\right)\right| dx d\mu  \\
			\leq  \|\varphi\|_{\infty} \|f\|_{\infty} \|g\|_{\infty} \displaystyle\iint_{((Q-\epsilon^{2}a)\Delta Q)\times \Omega} |u_{\epsilon}(x,\mu)| dx d\mu,  
		\end{array}
	\end{equation*}
	yields that $(II)$ and $(III)$ goes towards 0 as $E \ni \epsilon \to 0$; here, the symbol $\Delta$ between the sets $(Q-\epsilon^{2}a)$ and $Q$ denotes the symmetric difference between these two sets. The proof is complete. 	
\end{proof}

\subsection{Compactness theorem 1: case of Orlicz's spaces}
The proof of the first compactness theorem requires the following lemma.
\begin{lemma}\label{lem2}
	Let $f \in \mathcal{C}^{\infty}_{0}(Q)\otimes\mathcal{C}^{\infty}(\Omega)\otimes \mathcal{C}_{per}(Y)$. Then, for any $\epsilon >0$, the function $f_{T}^{\epsilon} : (x, \omega, y) \rightarrow f\left(x, T\left(\frac{x}{\epsilon}\right)\omega, \frac{x}{\epsilon^{2}} \right)$ is such that 
	\begin{equation}\label{lem1}
		\lim_{\epsilon\to 0} \|f_{T}^{\epsilon}\|_{L^{\Phi}(Q\times \Omega)}  =  \|f\|_{L^{\Phi}(Q\times \Omega\times Y)}.
	\end{equation}
\end{lemma}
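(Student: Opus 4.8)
\emph{Plan.} The idea is to reduce the convergence of Luxemburg norms in \eqref{lem1} to a convergence of the associated modular functionals, and then to obtain the latter by testing the constant sequence $u_{\epsilon}\equiv 1$ — which is bounded in $L^{\Phi}(Q\times\Omega)$ and weakly stochastically $2$-scale converges to $1$ — against a suitable integrand. If $f=0$ a.e.\ the statement is trivial, so assume $f\neq 0$; being a finite sum of tensor products of bounded functions, $f$ is bounded, and $f(\cdot,\omega,y)$ is supported in a fixed compact $K\subset Q$, so that $L:=\|f\|_{L^{\Phi}(Q\times\Omega\times Y)}\in(0,\infty)$. For $\delta>0$ I would set
\[
\varrho_{\epsilon}(\delta)=\iint_{Q\times\Omega}\Phi\!\Big(\frac{|f^{\epsilon}_{T}(x,\omega)|}{\delta}\Big)\,dx\,d\mu ,\qquad
\varrho(\delta)=\iiint_{Q\times\Omega\times Y}\Phi\!\Big(\frac{|f(x,\omega,y)|}{\delta}\Big)\,dx\,d\mu\,dy ,
\]
with $f^{\epsilon}_{T}(x,\omega)=f(x,T(x/\epsilon)\omega,x/\epsilon^{2})$. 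Since $\Phi$ is a finite, continuous Young function which is strictly increasing on $[0,\infty)$ (because $\Phi(t)>0$ for $t>0$) and $f,f^{\epsilon}_{T}$ are bounded on finite measure spaces, $\varrho$ and every $\varrho_{\epsilon}$ are finite, continuous, and strictly decreasing on $\{\delta:\varrho(\delta)>0\}$, with limits $0$ at $+\infty$ and $+\infty$ at $0^{+}$; consequently $\|h\|_{L^{\Phi}}\le\lambda\iff\int\Phi(|h|/\lambda)\le1$ for every $\lambda>0$, $\varrho(L)=1$, and $\|f^{\epsilon}_{T}\|_{L^{\Phi}(Q\times\Omega)}$ is the unique root of $\varrho_{\epsilon}(\delta)=1$.

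Next I would establish the modular convergence $\varrho_{\epsilon}(\delta)\to\varrho(\delta)$ for each fixed $\delta>0$. Writing $H_{\delta}(x,\omega,y):=\Phi(|f(x,\omega,y)|/\delta)$, one has $\Phi(|f^{\epsilon}_{T}(x,\omega)|/\delta)=H_{\delta}(x,T(x/\epsilon)\omega,x/\epsilon^{2})$, hence $\varrho_{\epsilon}(\delta)=\iint_{Q\times\Omega}H_{\delta}(x,T(x/\epsilon)\omega,x/\epsilon^{2})\,dx\,d\mu$. Because $x\mapsto\varphi_{i}(x)$ is continuous, $\psi_{i},g_{i}$ are bounded, and $\Phi$ is continuous, $H_{\delta}$ is bounded, $Y$-periodic in $y$, supported in $K$ in the $x$-variable, and continuous in $x$ with values in $L^{\infty}(\Omega;L^{\infty}_{per}(Y))$; thus $H_{\delta}\in\mathcal{C}(Q;L^{\infty}(\Omega;L^{\infty}_{per}(Y)))$. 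Applying Remark~\ref{lem16}$(R_{1})$ to the constant sequence $u_{\epsilon}\equiv 1$ — bounded in $L^{\Phi}(Q\times\Omega)$ and, by the mean value property of $\{T(x)\}$ combined with periodic averaging at the scale $\epsilon^{2}$ (exactly as in \cite{sango} and in its Orlicz-space counterpart \cite{franck}), weakly stochastically $2$-scale convergent to $1$ — with test integrand $H_{\delta}$, yields
\[
\varrho_{\epsilon}(\delta)=\iint_{Q\times\Omega}1\cdot H_{\delta}\Big(x,T\big(\tfrac{x}{\epsilon}\big)\omega,\tfrac{x}{\epsilon^{2}}\Big)\,dx\,d\mu\;\longrightarrow\;\iiint_{Q\times\Omega\times Y}1\cdot H_{\delta}(x,\omega,y)\,dx\,d\mu\,dy=\varrho(\delta).
\]

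Then I would pass from modular to norm convergence. For $\lambda>L$ one has $\varrho(\lambda)<\varrho(L)=1$, so $\varrho_{\epsilon}(\lambda)<1$ for all small $\epsilon$, i.e.\ $\|f^{\epsilon}_{T}\|_{L^{\Phi}(Q\times\Omega)}\le\lambda$; letting $\lambda\downarrow L$ gives $\limsup_{\epsilon\to0}\|f^{\epsilon}_{T}\|_{L^{\Phi}(Q\times\Omega)}\le L$. For $0<\lambda<L$ one has $\varrho(\lambda)>\varrho(L)=1$, so $\varrho_{\epsilon}(\lambda)>1$ for all small $\epsilon$, i.e.\ $\|f^{\epsilon}_{T}\|_{L^{\Phi}(Q\times\Omega)}>\lambda$; letting $\lambda\uparrow L$ gives $\liminf_{\epsilon\to0}\|f^{\epsilon}_{T}\|_{L^{\Phi}(Q\times\Omega)}\ge L$. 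Combining the two bounds proves \eqref{lem1}.

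The only substantive point is the modular convergence, and within it the mean value property for the two \emph{separated} scales $x/\epsilon$ (stochastic) and $x/\epsilon^{2}$ (periodic): one must average out the fast periodic oscillation on cubes of side $\sim\epsilon$, so that $g(x/\epsilon^{2})$ is replaced by $\mathcal{M}_{Y}(g)$, while simultaneously controlling $\psi(T(x/\epsilon)\omega)$ through the ergodic theorem for $\{T(x)\}$. A self-contained proof would reduce, by Stone--Weierstrass, to a single product $\varphi(x)\psi(T(x/\epsilon)\omega)g(x/\epsilon^{2})$ and then run this two-step decoupling, but it is cleanest to quote it from \cite{sango,franck}. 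Everything else is the standard equivalence between convergence of modulars and convergence of Luxemburg norms, which is available here because $\Phi$ is finite-valued and $f^{\epsilon}_{T}$ is bounded.
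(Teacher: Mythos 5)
Your proof is correct, and its skeleton (first establish convergence of the modulars $\varrho_{\epsilon}(\delta)\to\varrho(\delta)$, then deduce convergence of the Luxemburg norms) is the same as the paper's, but the two key steps are executed differently. For the modular limit, the paper first uses the $T$-invariance of $\mu$ to replace $\psi(T(x/\epsilon)\omega)$ by $\psi(\omega)$ inside the integral, which eliminates the stochastic scale entirely, and then invokes only the \emph{periodic} mean-value result of \cite[Proposition 4.3]{tacha1} for the remaining $x/\epsilon^{2}$ oscillation; you instead test the constant sequence $u_{\epsilon}\equiv 1$ against $H_{\delta}=\Phi(|f|/\delta)$ via Remark \ref{lem16}$(R_{1})$, which presupposes the full two-scale mean-value property for the coupled scales $x/\epsilon$ and $x/\epsilon^{2}$ (you rightly flag that this must be imported from \cite{sango,franck}) — a heavier external input than the paper needs, though one the paper itself uses later in Proposition \ref{lem23}. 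For the passage from modulars to norms, your argument is actually \emph{more} careful than the paper's: the paper simply interchanges $\lim_{\epsilon\to 0}$ with the infimum defining the Luxemburg norm, an interchange that is not valid in general and is really justified only by the continuity and strict monotonicity of $\delta\mapsto\varrho(\delta)$ together with the $\limsup$/$\liminf$ sandwich that you spell out. One minor over-claim: you assert that $\|f_{T}^{\epsilon}\|_{L^{\Phi}(Q\times\Omega)}$ is the unique root of $\varrho_{\epsilon}(\delta)=1$, which could fail for exceptional $\epsilon$ if $f_{T}^{\epsilon}$ happened to vanish a.e.; but your concluding argument only uses the two one-sided implications $\varrho_{\epsilon}(\lambda)<1\Rightarrow\|f_{T}^{\epsilon}\|\le\lambda$ and $\varrho_{\epsilon}(\lambda)>1\Rightarrow\|f_{T}^{\epsilon}\|>\lambda$, so nothing is lost.
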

\begin{proof}
	We just need to show (\ref{lem1}) for each $f \in \mathcal{C}^{\infty}_{0}(Q)\otimes\mathcal{C}^{\infty}(\Omega)\otimes \mathcal{C}_{per}(Y)$. For that, it is sufficient to do it for $f$ under the form $f(x, \omega,y) = \phi(x)\psi(\omega)g(y)$ with $\phi \in \mathcal{C}^{\infty}_{0}(Q)$, $\psi \in \mathcal{C}^{\infty}(\Omega)$ and $g \in \mathcal{C}_{per}(Y)$. \\
	Given the fact that the measure $\mu$ is invariant under the dynamical system $T$ and by \cite[Proposition 4.3]{tacha1}, 
	\begin{equation*}
		\begin{array}{rcl}
			\displaystyle 	\lim_{\epsilon\to 0} \| f_{T}^{\epsilon} \|_{\Phi, Q\times\Omega}  &= & \displaystyle  \lim_{\epsilon\to 0} \inf \left\{k >0 \; ; \;\, \iint_{Q\times \Omega} \Phi\left(\dfrac{\phi(x)\psi(T\left(\frac{x}{\epsilon}\right)\omega)g\left(\frac{x}{\epsilon^{2}}\right) }{k} \right)dxd\mu \leq 1 \right\} \\
			&= & \displaystyle  \lim_{\epsilon\to 0} \inf \left\{k >0 \; ; \;\, \iint_{Q\times \Omega} \Phi\left(\dfrac{\phi(x)g\left(\frac{x}{\epsilon^{2}}\right)\psi(\omega) }{k} \right)dxd\mu_{T} \leq 1 \right\} \\
			& =& \inf \left\{k >0 \; ; \;\, \displaystyle \lim_{\epsilon\to 0}  \displaystyle \iint_{Q\times \Omega} \Phi\left(\dfrac{\phi(x)g\left(\frac{x}{\epsilon^{2}}\right)\psi(\omega) }{k} \right)dxd\mu \leq 1 \right\}  \\
			& = & \inf \left\{k >0 \; ; \;\,   \displaystyle \iiint_{Q\times \Omega\times Y} \Phi\left(\dfrac{\phi(x)g(y)\psi(\omega) }{k} \right)dxd\mu dy \leq 1 \right\}  \\
			& = & \| f \|_{\Phi, Q\times\Omega\times Y}.
		\end{array}
	\end{equation*}
\end{proof}
With this lemma, we are in position to prove our first compactness result whose we recall here for the reader's convenience.

\noindent \textbf{Theorem \ref{lem4}.}(compactness 1) \\ 
\textit{	Let $\Phi \in \Delta_{2}$ be a Young function.	Any bounded sequence $(u_{\epsilon})_{\epsilon\in E}$  in $L^{\Phi}(Q\times\Omega)$ admits a subsequence which is weakly stochastically 2-scale convergent in $ L^{\Phi}(Q\times\Omega)$. } 

\begin{proof}
	Let $\Phi \in \Delta_{2}$ be a Young function and $\widetilde{\Phi} \in \Delta_{2}$ its conjugate, $(u_{\epsilon})_{\epsilon\in E}$ a bounded sequence  in $L^{\Phi}(Q\times\Omega)$. \\
	In order to use \cite[Proposition 3.2]{gabri}, we get $Y = L^{\widetilde{\Phi}}(Q\times\Omega\times Y_{per})$ and $X = \mathcal{C}^{\infty}_{0}(Q)\otimes\mathcal{C}^{\infty}(\Omega)\otimes \mathcal{C}_{per}(Y)$.
	For each $f \in \mathcal{C}^{\infty}_{0}(Q)\otimes\mathcal{C}^{\infty}(\Omega)\otimes \mathcal{C}_{per}(Y)$ and $\epsilon > 0$, let us define the functional $\Gamma_{\epsilon}$ by 
	\begin{equation}
		\Gamma_{\epsilon}(f) = \iint_{Q\times \Omega} u_{\epsilon}(x,\omega) f\left(x, T\left(\frac{x}{\epsilon}\right)\omega, \frac{x}{\epsilon^{2}} \right) dx\, d\mu.
	\end{equation}
	Then by Holder's inequality and the Lemma \ref{lem2}, we have
	\begin{equation*}\label{fra1}
		\limsup_{\epsilon} |\Gamma_{\epsilon}(f)| \leq 2c \, \|f\|_{L^{\widetilde{\Phi}}(Q\times\Omega\times Y)},
	\end{equation*}
	where $\displaystyle c = \sup_{\epsilon} \|u_{\epsilon}\|_{L^{\Phi}(Q\times\Omega)}$. \\
	Hence, we deduce from \cite[Proposition 3.2]{gabri} the existence of a subsequence $E'$ of $E$ and of a unique $u_{0} \in Y' = L^{\Phi}(Q\times\Omega\times Y_{per})$ such that 
	\begin{eqnarray*}
		\lim_{\epsilon \to 0} \iint_{Q\times \Omega} u_{\epsilon}(x,\omega) f\left(x, T\left(\frac{x}{\epsilon}\right)\omega, \frac{x}{\epsilon^{2}} \right) dx\, d\mu dy  \\ 
		= \iiint_{Q\times \Omega\times Y} u_{0}(x,\omega,y) f(x,\omega,y) dx\, d\mu dy
	\end{eqnarray*}
	for all $f \in \mathcal{C}^{\infty}_{0}(Q)\otimes\mathcal{C}^{\infty}(\Omega)\otimes \mathcal{C}_{per}(Y)$.
\end{proof}

\subsection{Compactness theorem 2: case of Orlicz-Sobolev spaces}\label{subsec2}
To prove the second compactness result, we will need the following lemma whose proof can be found in \cite[Lemma 11]{franck}.
\begin{lemma}\cite{franck}\label{lem5}
	Let $\Phi \in \Delta_{2}$ be a Young function and $\widetilde{\Phi}$ its conjugate.	Let $v \in L^{\Phi}(\Omega)^{N}$ satisfy 
	\begin{equation*}
		\int_{\Omega} v\cdot g\, d\mu = 0, \quad \textup{for \; all} \;\, g \in  \mathcal{V}^{\widetilde{\Phi}}_{\textup{div} }= \{ f \in \mathcal{C}^{\infty}(\Omega)^{N} : \textup{div}_{\omega,\widetilde{\Phi}}f=0 \}.
	\end{equation*}
	Then, there exists $u \in W^{1}_{\#}L^{\Phi}(\Omega)$ such that 
	\begin{equation*}
		v = \overline{D}_{\omega,\Phi}u.
	\end{equation*} 
\end{lemma}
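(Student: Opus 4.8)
The plan is to reduce the statement to a Hahn--Banach argument whose essential ingredient is the density, in $L^{\widetilde\Phi}(\Omega)^N$, of smooth divergence-free fields among all divergence-free fields. First I would record that, by the definition of the seminorm $\|\cdot\|_{\#,\Phi}$ on $W^1_\# L^\Phi(\Omega)$ together with the equivalence of norms on $L^\Phi(\Omega)^N$, the gradient operator $\overline{D}_{\omega,\Phi}\colon W^1_\# L^\Phi(\Omega)\to L^\Phi(\Omega)^N$ is a topological isomorphism onto its range $\mathcal{G}:=\overline{D}_{\omega,\Phi}\big(W^1_\# L^\Phi(\Omega)\big)$, which is therefore a closed subspace of $L^\Phi(\Omega)^N$. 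It then suffices to show $v\in\mathcal{G}$. Since $L^\Phi(\Omega)$ is separable and reflexive with dual $L^{\widetilde\Phi}(\Omega)$, the Hahn--Banach theorem reduces this to the following: every $g\in L^{\widetilde\Phi}(\Omega)^N$ that annihilates $\mathcal{G}$ --- equivalently, that satisfies $\textup{div}_{\omega,\widetilde\Phi}g=0$ --- also annihilates $v$, that is, $\int_\Omega v\cdot g\,d\mu=0$.

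The heart of the proof is then the approximation of such a $g$, in $L^{\widetilde\Phi}(\Omega)^N$, by elements of $\mathcal{V}^{\widetilde\Phi}_{\textup{div}}$, which I would carry out by mollifying along the dynamical system. Let $(\varrho_n)\subset\mathcal{D}(\mathbb{R}^N)$ be standard mollifiers and let $\{U(y)\}_{y\in\mathbb{R}^N}$ be the strongly continuous group of isometries induced by $T$ on $L^{\widetilde\Phi}(\Omega)$ (see \cite[Proposition 3]{franck}); set $g_n(\omega)=\int_{\mathbb{R}^N}\varrho_n(y)\,(U(y)g)(\omega)\,dy$. Three things must be verified: first, $g_n\to g$ in $L^{\widetilde\Phi}(\Omega)^N$, which follows from strong continuity of $U$; second, $g_n\in\mathcal{C}^\infty(\Omega)^N$, obtained by differentiating under the integral sign and integrating by parts in $y$ --- here one first replaces $g$ by a truncation lying in the dense subspace $L^\infty(\Omega)^N\cap L^{\widetilde\Phi}(\Omega)^N$, so that the resulting stochastic derivatives of every order are bounded; third, $\textup{div}_{\omega,\widetilde\Phi}g_n=0$, which comes from the $T$-invariance of $\mu$: since $U(-y)$ maps $W^1_\# L^\Phi(\Omega)$ into itself and commutes with $\overline{D}_{\omega,\Phi}$, Fubini gives $\langle\textup{div}_{\omega,\widetilde\Phi}g_n,w\rangle=\int_{\mathbb{R}^N}\varrho_n(y)\,\langle\textup{div}_{\omega,\widetilde\Phi}g,U(-y)w\rangle\,dy=0$ for all $w\in W^1_\# L^\Phi(\Omega)$. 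Hence $g_n\in\mathcal{V}^{\widetilde\Phi}_{\textup{div}}$.

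Once this is established, the hypothesis yields $\int_\Omega v\cdot g_n\,d\mu=0$ for every $n$, and the Hölder inequality in Orlicz spaces, $\big|\int_\Omega v\cdot(g-g_n)\,d\mu\big|\le 2\,\|v\|_{\Phi,\Omega}\,\|g-g_n\|_{\widetilde\Phi,\Omega}$, allows passage to the limit to conclude $\int_\Omega v\cdot g\,d\mu=0$. This completes the Hahn--Banach argument and produces $u\in W^1_\# L^\Phi(\Omega)$ with $v=\overline{D}_{\omega,\Phi}u$, as required. I expect the genuine obstacle to be the second and third points above: confirming that the mollified fields belong to $\mathcal{C}^\infty(\Omega)^N$ with bounded stochastic derivatives, and that mollification through the group action commutes with $\textup{div}_{\omega,\widetilde\Phi}$; the truncation-then-mollification scheme combined with the invariance of $\mu$ under $T$ is exactly what makes both go through.
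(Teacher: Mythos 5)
The paper does not actually prove this lemma: it is imported verbatim from \cite{franck} (Lemma~11 there), so there is no in-paper argument to compare against. Your overall architecture is nonetheless the standard one for results of this Weyl-decomposition type, and its first half is sound: $\overline{D}_{\omega,\Phi}$ is by construction an isometry of the completion $W^{1}_{\#}L^{\Phi}(\Omega)$ onto its range $\mathcal{G}$, so $\mathcal{G}$ is closed; and by Hahn--Banach in the reflexive space $L^{\Phi}(\Omega)^{N}$, the conclusion $v\in\mathcal{G}$ reduces exactly to the density of $\mathcal{V}^{\widetilde{\Phi}}_{\textup{div}}$ in $\{g\in L^{\widetilde{\Phi}}(\Omega)^{N}:\textup{div}_{\omega,\widetilde{\Phi}}g=0\}$, since annihilating $\mathcal{G}$ is, by the paper's definition of $\textup{div}_{\omega,\widetilde{\Phi}}$, the same as being weakly divergence-free.

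The gap is in the execution of that density step, and it is exactly at the point you flagged as delicate. Your three requirements on $g_{n}$ are mutually incompatible as described. To place $g_{n}$ in $\mathcal{C}^{\infty}(\Omega)^{N}$ you must produce \emph{bounded} stochastic derivatives of all orders (that is how the paper defines $\mathcal{C}^{\infty}(\Omega)$), and for this you truncate $g$ to an element $g^{M}$ of $L^{\infty}(\Omega)^{N}$ before mollifying along the group. But truncation destroys the constraint: $\textup{div}_{\omega,\widetilde{\Phi}}g^{M}\neq 0$ in general, since $\langle\textup{div}_{\omega,\widetilde{\Phi}}g^{M},w\rangle=-\int_{\Omega}g^{M}\cdot\overline{D}_{\omega}w\,d\mu$ has no reason to vanish. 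Your verification of the third bullet, $\langle\textup{div}_{\omega,\widetilde{\Phi}}g_{n},w\rangle=\int_{\mathbb{R}^{N}}\varrho_{n}(y)\langle\textup{div}_{\omega,\widetilde{\Phi}}g,U(-y)w\rangle\,dy=0$, silently inserts the \emph{untruncated} $g$; applied to the field you actually mollified it gives $\int\varrho_{n}(y)\langle\textup{div}_{\omega,\widetilde{\Phi}}g^{M},U(-y)w\rangle\,dy$, which need not vanish. So either $g_{n}$ is divergence-free but its derivatives are only in $L^{\widetilde{\Phi}}$ (mollify without truncating), or $g_{n}$ is genuinely in $\mathcal{C}^{\infty}(\Omega)^{N}$ but only approximately divergence-free, and in the latter case the hypothesis $\int_{\Omega}v\cdot g_{n}\,d\mu=0$ no longer applies. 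Closing this requires an additional idea --- for instance a quantitative control of $\textup{div}_{\omega,\widetilde{\Phi}}(g^{M}_{n})$ in $(W^{1}_{\#}L^{\Phi}(\Omega))'$ as $M\to\infty$ together with a correction of $g^{M}_{n}$ by a potential field, or a self-improvement of the hypothesis to test fields whose stochastic derivatives lie merely in $L^{\widetilde{\Phi}}(\Omega)$ --- and as it stands the proposal does not supply it.
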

We now recall and prove the second compactness result as follows.

\noindent \textbf{Theorem \ref{lem29}.}(Compactness 2)\\  
\textit{	Let $\Phi \in \Delta_{2}$ be a Young function and $\widetilde{\Phi} \in \Delta_{2}$ its conjugate.
	Assume $(u_{\epsilon})_{\epsilon\in E}$ is a sequence in $W^{1}L^{\Phi}_{D_{x}}(Q; L^{\Phi}(\Omega))$ such that: } 
\begin{itemize}
	\item[(i)] \textit{ $(u_{\epsilon})_{\epsilon\in E}$ is bounded in $L^{\Phi}\left(Q\times\Omega\right)$ and $(D_{x}u_{\epsilon})_{\epsilon\in E}$ is bounded in $L^{\Phi}\left(Q\times\Omega\right)^{N}$.}
\end{itemize}
\textit{	Then there exist $u_{0} \in W^{1}L^{\Phi}_{D_{x}}(Q; I_{nv}^{\Phi}(\Omega))$, $u_{1} \in L^{1}\left(Q; W^{1}_{\#}L^{\Phi}(\Omega)\right)$ with $\overline{D}_{\omega}u_{1} \in L^{\Phi}(Q\times\Omega)^{N}$, $u_{2}\in L^{1}(Q\times\Omega; W^{1}_{\#}L^{\Phi}_{per}(Y))$ with $D_{y}u_{2} \in L^{\Phi}(Q\times\Omega\times Y_{per})^{N}$ and a subsequence $E'$ from $E$ such that, as $E' \ni \epsilon \rightarrow 0$, }
\begin{itemize}
	\item[(ii)] \textit{ $u_{\epsilon} \rightarrow u_{0}$ stoch. in $L^{\Phi}(Q\times \Omega)$-weak 2s;}
	\item[(iii)] \textit{ $D_{x}u_{\epsilon} \rightarrow D_{x}u_{0} + \overline{D}_{\omega}u_{1} + D_{y}u_{2}$ stoch. in $L^{\Phi}(Q\times \Omega)^{N}$-weak 2s,  with $u_{1} \in L^{\Phi}(Q; W^{1}_{\#}L^{\Phi}(\Omega))$, $u_{2}\in L^{\Phi}(Q\times\Omega; W^{1}_{\#}L^{\Phi}_{per}(Y))$  when $\widetilde{\Phi} \in \Delta'$. } 
\end{itemize}

\begin{proof}
	Let us first note that by hypothesis $(i)$, Theorem \ref{lem4} implies the existence of a subsequence $E'$ from $E$, a function $u_{0} \in L^{\Phi}(Q\times \Omega\times Y_{per})$ and a vector function $\mathbf{v} = (v_{i})_{1\leq i \leq N} \in L^{\Phi}(Q\times \Omega\times Y_{per})^{N}$ such that, as $E' \ni \epsilon \rightarrow 0$, we have  $u_{\epsilon} \rightarrow u_{0}$ stoch. in $L^{\Phi}(Q\times \Omega)$-weak 2s and 
	\begin{equation}\label{m2}
		D_{x}u_{\epsilon} \rightarrow \mathbf{v} \;\;\; \textup{stoch.\;\, in} \; L^{\Phi}(Q\times \Omega)^{N}-weak\, 2s.
	\end{equation} 
	We must check that:
	\begin{enumerate}
		\item $(\star)$ $u_{0}$ does not depend upon $y$, that is $D_{y}u_{0}=0$ ; \\ $(\star\star)$ $u_{0}(x,\cdot) \in I_{nv}^{\Phi}(\Omega)$, that is $D_{\omega}u_{0}(x,\cdot) = 0$ or equivalently, \\ $\int_{\Omega} u_{0}(x,\cdot)D_{i,\Phi}\varphi \, d\mu = 0$ for all $\varphi \in \mathcal{C}^{\infty}(\Omega)$, (see \cite[Page 299]{franck}) and,  \\ $(\star\star\star)$ $u_{0} \in W^{1}L^{\Phi}_{D_{x}}(Q; I_{nv}^{\Phi}(\Omega))$ ;
		\item  there exist two functions $u_{1} \in L^{1}\left(Q; W^{1}_{\#}L^{\Phi}(\Omega)\right)$ with $\overline{D}_{\omega}u_{1} \in L^{\Phi}(Q\times\Omega)^{N}$, $u_{2}\in L^{1}(Q\times\Omega; W^{1}_{\#}L^{\Phi}_{per}(Y))$ with $\overline{D}_{y}u_{2} \in L^{\Phi}(Q\times\Omega\times Y_{per})^{N}$ such that $\mathbf{v} = D_{x}u_{0} + \overline{D}_{\omega}u_{1} + D_{y}u_{2}$. \\
		The fact that $u_{1} \in L^{\Phi}(Q; W^{1}_{\#}L^{\Phi}(\Omega))$, $u_{2}\in L^{\Phi}(Q\times\Omega; W^{1}_{\#}L^{\Phi}_{per}(Y))$  if $\widetilde{\Phi} \in \Delta'$   will follow from \cite[Theorem 12]{franck} and \cite[Remark 2]{tacha2}.
	\end{enumerate}
	Let us first check (1) : $(\star)$ Let $\mathbf{\Phi}_{\epsilon}(x,\omega) = \epsilon^{2} \varphi(x)f\left(T\left(\frac{x}{\epsilon}\right)\omega \right)g\left(\frac{x}{\epsilon^{2}}\right)$ for $(x,\omega) \in Q\times \Omega$ where $\varphi \in \mathcal{C}^{\infty}_{0}(Q)$, $f \in \mathcal{C}^{\infty}(\Omega)$ and $g\in \mathcal{D}_{per}(Y)$. Then 
	\begin{equation*}
		\begin{array}{rcl}
			\displaystyle 	\iint_{Q\times \Omega} \dfrac{\partial u_{\epsilon}}{\partial x_{i}}\mathbf{\Phi}_{\epsilon} \, dxd\mu & = & - \displaystyle \iint_{Q\times \Omega} \epsilon^{2}\, u_{\epsilon} f^{\epsilon} g^{\epsilon}\dfrac{\partial \varphi}{\partial x_{i}}dxd\mu
			- \displaystyle\iint_{Q\times \Omega} u_{\epsilon} \varphi f^{\epsilon} (D_{y_{i}}g)^{\epsilon} dxd\mu \\
			& & 
			- \displaystyle\iint_{Q\times \Omega} \epsilon\, u_{\epsilon}\varphi (D_{i,\Phi}f^{\epsilon})dxd\mu, 
		\end{array} 
	\end{equation*}
	where $D_{y_{i}}g = \frac{\partial g}{\partial y_{i}}$, $f^{\epsilon}(x,\omega) = f\left(T\left(\frac{x}{\epsilon}\right)\omega \right)$ and $g^{\epsilon}(x)= g\left(\frac{x}{\epsilon^{2}}\right)$. Letting $E' \ni \epsilon \rightarrow 0$, we get 
	\begin{equation*}
		\iiint_{Q\times \Omega\times Y}  u_{0}\varphi(D_{y_{i}}g)f\, dxd\mu dy = 0,
	\end{equation*} 
	hence $\int_{Y} u_{0}(x,\omega,\cdot) D_{y_{i}}g \, dy = 0$ for all $g\in \mathcal{D}_{per}(Y)$ and all $1\leq i \leq N$, which means that $u_{0}$ does not depend on $y$.\\
	$(\star\star)$ Let $\mathbf{\Psi}_{\epsilon}(x,\omega) = \epsilon \varphi(x)f\left(T\left(\frac{x}{\epsilon}\right)\omega \right)$ for $(x,\omega) \in Q\times \Omega$ where $\varphi \in \mathcal{C}^{\infty}_{0}(Q)$ and $f \in \mathcal{C}^{\infty}(\Omega)$. Then, 
	\begin{equation*}
		\iint_{Q\times \Omega} \dfrac{\partial u_{\epsilon}}{\partial x_{i}}\mathbf{\Psi}_{\epsilon} \, dxd\mu = - \iint_{Q\times \Omega} \epsilon\, u_{\epsilon} f^{\epsilon}\dfrac{\partial \varphi}{\partial x_{i}}dxd\mu - \iint_{Q\times \Omega}  u_{\epsilon}\varphi(D_{i,\Phi}f^{\epsilon})dxd\mu, 
	\end{equation*}
	where $f^{\epsilon}(x,\omega) = f\left(T\left(\frac{x}{\epsilon}\right)\omega \right)$. Letting $E' \ni \epsilon \rightarrow 0$, we get 
	\begin{equation*}
		\iint_{Q\times \Omega}  u_{0}\varphi(D_{i,\Phi}f)dxd\mu = 0.
	\end{equation*} 
	Hence, $\displaystyle \int_{\Omega} u_{0}(x,\cdot)D_{i,\Phi}f d\mu = 0 $ for all $1\leq i \leq N$ and all $f \in \mathcal{C}^{\infty}(\Omega)$, which is equivalent to say that $u_{0}(x,\cdot) \in I_{nv}^{\Phi}(\Omega)$ for a.e. $x \in Q$. \\
	$(\star\star\star)$ Hypothesis (i) implies that the sequence $(u_{\epsilon})_{\epsilon\in E'}$ is bounded in $W^{1}L^{\Phi}_{D_{x}}(Q ; L^{\Phi}(\Omega))$, which yields the existence of a subsequence of $E'$ not relabeled and of a function $u \in W^{1}L^{\Phi}_{D_{x}}(Q ; L^{\Phi}(\Omega))$ such that $u_{\epsilon} \rightarrow u$ in $W^{1}L^{\Phi}_{D_{x}}(Q ; L^{\Phi}(\Omega))$-weak as $E' \ni \epsilon \rightarrow 0$. \\ In particular $\displaystyle \int_{\Omega} u_{\epsilon}(\cdot,\omega)\psi(\omega)d\mu \rightarrow \int_{\Omega} u(\cdot,\omega)\psi(\omega)d\mu$ in $L^{1}(Q)$-weak for all $\psi \in I_{nv}^{\widetilde{\Phi}}(\Omega)$. \\Therefore, using \cite[Lemma 3.6]{Bourgeat}, we get at once $u_{0} \in W^{1}L^{\Phi}_{D_{x}}(Q ; L^{\Phi}(\Omega))$ so that $u_{0} \in W^{1}L^{\Phi}_{D_{x}}(Q ; I_{nv}^{\Phi}(\Omega))$. \\
	We still have to check (2). Let us start by showing first the existence of $u_{2}\in L^{\Phi}(Q\times\Omega; W^{1}_{\#}L^{\Phi}_{per}(Y))$. For this purpose, let $r>0$ be fixed. Let $B_{\epsilon^{2}r}$ denote the open ball in $\mathbb{R}^{N}$ centered at the origin and of radius $\epsilon^{2}r$. By the equalities
	\begin{equation*}
		\begin{array}{l}
			\dfrac{1}{\epsilon^{2}} \left( u_{\epsilon}(x,\omega) - \dfrac{1}{|B_{\epsilon^{2}r}|}\displaystyle\int_{B_{\epsilon^{2}r}} u_{\epsilon}(x+\rho, \omega) d\rho \right) \\ =  \dfrac{1}{\epsilon^{2}}  \dfrac{1}{|B_{\epsilon^{2}r}|} \displaystyle\int_{B_{\epsilon^{2}r}} (u_{\epsilon}(x,\omega) - u_{\epsilon}(x+\rho, \omega) ) d\rho \\
			=  \dfrac{1}{\epsilon^{2}}  \dfrac{1}{|B_{r}|}\displaystyle \int_{B_{r}} (u_{\epsilon}(x,\omega) - u_{\epsilon}(x+ \epsilon^{2}\rho, \omega) ) d\rho \\
			=  - \dfrac{1}{|B_{r}|} \displaystyle\int_{B_{r}} d\rho \int_{0}^{1} Du_{\epsilon}(x+ t\epsilon^{2}\rho, \omega) \cdot \rho \; dt,
		\end{array}
	\end{equation*} 
	we deduce from the boundedness of $(u_{\epsilon})_{\epsilon\in E'}$ in $W^{1}L^{\Phi}_{D_{x}}(Q; L^{\Phi}(\Omega))$ that the sequence $(z^{r}_{\epsilon})_{\epsilon\in E'}$ defined by 
	\begin{equation*}
		z^{r}_{\epsilon}(x,\omega) = \dfrac{1}{\epsilon^{2}} \left( u_{\epsilon}(x,\omega) - \dfrac{1}{|B_{\epsilon^{2}r}|}\int_{B_{\epsilon^{2}r}} u_{\epsilon}(x+\rho, \omega) d\rho \right), \quad (x,\omega) \in Q\times\Omega,\; \epsilon\in E',
	\end{equation*}
	is bounded in $L^{\Phi}(Q\times\Omega)$. Thus, it results once more from Theorem \ref{lem4} the existence of a subsequence from $E'$ not relabeled and of a function $z_{r}$ in $L^{\Phi}(Q\times\Omega\times Y_{per})$ such that, as $E' \ni \epsilon \to 0$,
	\begin{equation}\label{m1}
		z_{\epsilon}^{r} \rightarrow z_{r} \; \textup{stoch. \, in} \; L^{\Phi}(Q\times\Omega)\textup{-weak\, 2s}.
	\end{equation}
	Now, for $\varphi\in \mathcal{C}^{\infty}_{0}(Q)$, $f\in \mathcal{D}(\Omega)$ and $g\in \mathcal{D}_{per}(Y)$ we have, 
	\begin{equation}\label{lemc2}
		\begin{array}{l}
			\displaystyle	\iint_{Q\times \Omega}  \left( \dfrac{\partial u_{\epsilon}}{\partial x_{i}} (x,\omega) - \dfrac{1}{|B_{\epsilon^{2}r}|}\int_{B_{\epsilon^{2}r}}\dfrac{\partial u_{\epsilon}}{\partial x_{i}}(x+\rho, \omega) d\rho \right) \varphi(x) f\left(T\left(\frac{x}{\epsilon}\right)\omega \right) g\left(\frac{x}{\epsilon^{2}}\right) dx d\mu  \\
			= - \displaystyle\iint_{Q\times \Omega} \epsilon^{2} z^{r}_{\epsilon}(x,\omega) f\left(T\left(\frac{x}{\epsilon}\right)\omega \right) g\left(\frac{x}{\epsilon^{2}}\right) \dfrac{\partial \varphi}{\partial x_{i}}(x) dx d\mu \\
			\;\;\;	-  \displaystyle\iint_{Q\times \Omega} \epsilon z^{r}_{\epsilon}(x,\omega) \varphi(x)  g\left(\frac{x}{\epsilon^{2}}\right) (D_{i,\Phi} f)\left(T\left(\frac{x}{\epsilon}\right)\omega \right) dx d\mu \\
			\;\;\;	- \displaystyle \iint_{Q\times \Omega}  z^{r}_{\epsilon}(x,\omega) \varphi(x) f\left(T\left(\frac{x}{\epsilon}\right)\omega \right) \dfrac{\partial g}{\partial x_{i}}\left(\frac{x}{\epsilon^{2}}\right)  dx d\mu. 
		\end{array}
	\end{equation}
	Passing to the limit in (\ref{lemc2}), as $E' \ni \epsilon \to 0$, using (\ref{m2}), (\ref{m1}) and Lemma \ref{lemc1}, one gets 
	\begin{equation*}
		\begin{array}{l}
			\displaystyle \iiint_{Q\times \Omega\times Y}  \left( v_{i}(x,\omega,y) - \dfrac{1}{|B_{r}|}\int_{B_{r}}v_{i}(x, \omega, y+\rho) d\rho \right) \varphi(x) f(\omega) g(y) dx d\mu dy  \\
			= - \displaystyle \iiint_{Q\times \Omega\times Y}  z_{r}(x,\omega,y) \varphi(x) f(\omega) \dfrac{\partial g}{\partial y_{i}}(y) dx d\mu dy.
		\end{array}
	\end{equation*}
	Since $\varphi$, $f$ and $g$ are arbitrary, we get that
	\begin{equation*}
		\dfrac{\partial z_{r}}{\partial y_{i}}(x,\omega,y) =  v_{i}(x,\omega,y) - \dfrac{1}{|B_{r}|}\int_{B_{r}}v_{i}(x, \omega, y+\rho) d\rho,
	\end{equation*}
	a.e. in $x\in Q$, $\omega \in \Omega$ and $y \in \mathbb{R}^{N}$. \\
	Set $f_{r}(x,\omega,y) = z_{r}(x,\omega,y) - \mathcal{M}(z_{r}(x,\omega,\cdot))$ for a.e. in $(x,\omega,y)\in Q\times \Omega \times \mathbb{R}^{N}$, where $\mathcal{M}(z_{r}(x,\omega,\cdot)) = \int_{Y} z_{r}(x,\omega,y) dy$. Then $\mathcal{M}(f_{r}(x,\omega,\cdot)) = 0$ and moreover $D_{y}f_{r}(x,\omega,\cdot) = D_{y}z_{r}(x,\omega,\cdot)$ for a.e. $(x,\omega) \in Q\times \Omega$, so that $f_{r}(x,\omega,\cdot) \in  L^{\Phi}_{\#}(Y)$ with $\dfrac{\partial f_{r}}{\partial y_{i}} (x,\omega,\cdot) \in  L^{\Phi}_{per}(Y)$ for all $1\leq i \leq N$, that is $f_{r}(x,\omega,\cdot) \in  W^{1}_{\#}L^{\Phi}_{per}(Y)$.
	\\	On the other hand, $D_{y}f_{r} \in L^{\Phi}(Q\times\Omega\times Y_{per})^{N} \subset  L^{1}\left(Q\times\Omega; L^{\Phi}_{per}(Y)\right)$, (see, e.g. \cite[Page 126]{tacha3}), and so $f_{r} \in  L^{1}\left(Q\times\Omega; W^{1}_{\#}L^{\Phi}_{per}(Y)\right)$. By \cite[Remark 2]{tacha2},	 it follows that $f_{r} \in L^{\Phi}\left(Q\times\Omega; W^{1}_{\#}L^{\Phi}_{per}(Y)\right)$.
	Moreover we have
	\begin{equation*}
		\dfrac{\partial f_{r}}{\partial y_{i}}(x,\omega,\cdot) =  v_{i}(x,\omega,\cdot) - \dfrac{1}{|B_{r}|}\int_{B_{r}}v_{i}(x, \omega, \cdot+\rho) d\rho, \quad 1\leq i \leq N.
	\end{equation*}
	As the function $v_{i}$ is periodic in its third argument $y$, we have that, when $r \to +\infty$, the right-hand side of the above equality goes towards $v_{i}(x,\omega,\cdot) - \mathcal{M}(v_{i}(x,\omega,\cdot))$, whence the existence of a unique function $u_{2}(x,\omega,\cdot) \in W^{1}_{\#}L^{\Phi}_{per}(Y)$ such that 
	\begin{eqnarray*}
		D_{y}u_{2}(x,\omega,\cdot) = \mathbf{v}(x,\omega,\cdot) - \mathcal{M}(\mathbf{v}(x,\omega,\cdot)), \quad dx\times d\mu-\textup{a.e.} \;\, (x,\omega) \in Q\times \Omega,
	\end{eqnarray*}
	hence the existence of a unique $u_{2} : Q\times \Omega \rightarrow W^{1}_{\#}L^{\Phi}_{per}(Y)$, $(x,\omega) \mapsto u_{2}(x,\omega,\cdot)$, lying in $L^{\Phi}(Q\times\Omega ; W^{1}_{\#}L^{\Phi}_{per}(Y))$ such that 
	\begin{equation}\label{lemc4}
		\mathbf{v} - \mathcal{M}(\mathbf{v}) = D_{y}u_{2}.
	\end{equation}	
	Let us finally derive the existence of $u_{1}$.\\ 
	Let	$\mathbf{\Psi}_{\epsilon}(x,\omega) = \varphi(x)\mathbf{\Psi}\left(T\left(\frac{x}{\epsilon}\right)\omega \right)$ with $\varphi \in \mathcal{C}^{\infty}_{0}(Q)$ and $\mathbf{\Psi} = (\psi_{j})_{1\leq j\leq N} \in \mathcal{V}^{\widetilde{\Phi}}_{\textup{div}}$ (i.e., $\textup{div}_{\omega,\widetilde{\Phi}}\mathbf{\Psi} = 0$). Clearly 
	\begin{equation*}
		\sum_{j=1}^{N} \iint_{Q\times \Omega} \dfrac{\partial u_{\epsilon}}{\partial x_{j}} \varphi\, \psi_{j}^{\epsilon}\, dxd\mu = - \sum_{j=1}^{N} \iint_{Q\times \Omega} u_{\epsilon }\psi_{j}^{\epsilon}\dfrac{\partial \varphi}{\partial x_{j}}\, dxd\mu,
	\end{equation*}
	where $\psi_{j}^{\epsilon}(x,\omega) = \psi_{j}\left(T\left(\frac{x}{\epsilon}\right)\omega \right)$. The passage to the limit  (when $E' \ni \epsilon \rightarrow 0$) yields 
	\begin{equation*}
		\sum_{j=1}^{N} \iiint_{Q\times \Omega\times Y} v_{j} \varphi\, \psi_{j}\, dxd\mu dy = - \sum_{j=1}^{N} \iiint_{Q\times \Omega\times Y} u_{0 }\psi_{j}\dfrac{\partial \varphi}{\partial x_{j}}\, dxd\mu dy,
	\end{equation*}
	or equivalently
	\begin{equation*}
		\iiint_{Q\times \Omega\times Y} \left( \textbf{v}(x,\omega,y) - Du_{0}(x,\omega)  \right)\cdot \mathbf{\Psi}(\omega)\varphi(x) \, dxd\mu dy = 0,
	\end{equation*}
	and so, as $\varphi$ is arbitrarily fixed in $\mathcal{C}^{\infty}_{0}(Q)$,
	\begin{equation*}
		\iint_{\Omega\times Y} \left( \mathbf{v}(x,\omega,y) - Du_{0}(x,\omega)  \right)\cdot \mathbf{\Psi}(\omega) \,d\mu dy = 0, \;\;\;  
	\end{equation*}
	for $\mathbf{\Psi} \in \mathcal{V}^{\widetilde{\Phi}}_{\textup{div}}$ and for a.e. $x\in Q$. \\
	This is also equivalent to
	\begin{equation*}
		\int_{\Omega} \left( \mathcal{M}( \mathbf{v}(x,\omega,\cdot)) - Du_{0}(x,\omega)  \right)\cdot \mathbf{\Psi}(\omega) \,d\mu = 0, \;\;\;  
	\end{equation*}
	for $\mathbf{\Psi} \in \mathcal{V}^{\widetilde{\Phi}}_{\textup{div}}$ and for a.e. $x\in Q$.\\
	Therefore, Lemma \ref{lem5} provides us with a unique $u_{1}(x, \cdot) \in  W^{1}_{\#}L^{\Phi}(\Omega)$ such that 
	\begin{equation}\label{lemc3}
		\mathcal{M}(\mathbf{v}(x,\cdot,\cdot)) - Du_{0}(x,\cdot) = \overline{D}_{\omega}u_{1}(x,\cdot) \;\; \textup{for} \;\, \mu\textup{-a.e.} \;\, x \in Q.
	\end{equation} 
	The above equality reads as, for a.e. $x\in Q$, $	\mathcal{M}(\mathbf{v}(x,\omega,\cdot)) - Du_{0}(x,\omega) = \overline{D}_{\omega}u_{1}(x,\omega)$, for $\mu-$a.e. $\omega\in \Omega$. \\
	Finally, (\ref{lemc4}) and (\ref{lemc3}) get $\mathbf{v} = Du_{0} + \overline{D}_{\omega}u_{1} + D_{y}u_{2}$.  On the other hand, it is shown in \cite[Theorem 12]{franck} that, $u_{1} \in L^{1}\left(Q; W^{1}_{\#}L^{\Phi}(\Omega)\right)$ and  that, if $\widetilde{\Phi} \in \Delta'$ then $u_{1} \in L^{\Phi}\left(Q; W^{1}_{\#}L^{\Phi}(\Omega)\right)$. The proof is complete.
\end{proof}
\begin{remark}
	Under the hypothesis of Theorem \ref{lem29}, if the sequence $(u_{\epsilon})_{\epsilon\in E} \subset  L^{\Phi}(Q\times\Omega)$ is such that $u_{\epsilon}(\cdot,\omega) \in X$ for all $\epsilon\in E$ and for $\mu$-a.e. $\omega\in \Omega$, where $X$ is a norm closed convex subset of $W^{1}L^{\Phi}(Q)$, then as in \cite[Theorem 2]{sango} the stochastic two-scale limit $u_{0}$ is such that $u_{0}(\cdot,\omega) \in X$ for $\mu$-a.e. $\omega\in \Omega$.
\end{remark}


\section{Application to the homogenization of a class of minimization problems}\label{sect4}

In this section, we intend to study the asymptotic behaviour (as $0 < \epsilon \rightarrow 0$) of the sequence of solutions to the minimization problem \eqref{ras2}-\eqref{ras1} under the hypotheses of Subsection \ref{hypoproblem}, in particular when the random-periodic integrand $f$ in \eqref{ras1}  satisfying the conditions $(H_{1})-(H_{4})$. For that, we first establish the existence and uniqueness of the solution to the problem \eqref{ras2} for each $\epsilon>0$ given. Next, we establish the preliminary convergence results for the problem \eqref{ras1}. To end, we regularize the integrand $f$ in order to prove the main homogenization result stated in Theorem \ref{lem37}. However, we make the following remarks regarding the originality of our homogenization problem.


\begin{remark}
	If we consider the particular dynamical system $T(x)$ on $\Omega = \mathbb{T}^{N} \equiv \mathbb{R}^{N}/\mathbb{Z}^{N}$ (the $N$-dimensional torus) defined by $T(x)\omega = x+\omega\;\textup{mod}\;\mathbb{Z}^{N}$, then one can view $\Omega$ as the unit cube in $\mathbb{R}^{N}$ with all the pairs of antipodal faces being identified. The Lebesgue measure on $\mathbb{R}^{N}$ induces the Haar measure on $\mathbb{T}^{N}$ which is invariant with respect to the action of $T(x)$ on $\mathbb{T}^{N}$. Moreover, $T(x)$ is ergodic and in this situation, any function on $\Omega$ may be regarded as a periodic function on $\mathbb{R}^{N}$ whose period in each coordinate is 1, so that in this case our integrand $f$ may be viewed as a periodic function with respect to both variables $\omega$ and $y$. Therefore, the stochastic-periodic homogenization problem (\ref{ras2}) is equivalent to the reiterated-periodic homogenization problem,
	\begin{equation*}
		\min \left\{ \int_{Q} f\left(x,\frac{x}{\epsilon},\frac{x}{\epsilon^{2}}, Dv(x) \right) dx \; : \; v \in  W^{1}_{0}L^{\Phi}(Q) \right\},
	\end{equation*}
	and whose a particular case is treated in \cite{tacha3}. 
\end{remark}
\begin{remark}
	On the other hand, if $\Phi(t) = \frac{t^{p}}{p}$ ($p>1, \;t\geq 0$), then $\Phi \in \Delta_{2}\cap\Delta'$ (with $\widetilde{\Phi}(t)=\frac{t^{q}}{q}$, $q=\frac{p}{p-1}$) and one has $L^{\Phi}(Q\times\Omega)\equiv L^{p}(Q\times\Omega)$, $L^{\Phi}(Q\times\Omega\times Y_{per}) \equiv L^{p}(Q\times\Omega; L^{p}_{per}(Y))$ and $W^{1}_{0}L^{\Phi}(Q; L^{\Phi}(\Omega))\equiv W^{1,p}_{0}(Q; L^{p}(\Omega))=L^{p}(\Omega; W^{1,p}_{0}(Q))$.  Therefore, in this case, the stochastic-periodic homogenization problem (\ref{ras2}) can be rewritten in the classical Sobolev's spaces as
	\begin{equation*}
		\min \left\{ \int_{Q\times\Omega} f\left(x,T\left(\frac{x}{\epsilon}\right)\omega,\frac{x}{\epsilon^{2}}, Dv(x,\omega) \right) dx \; : \; v \in  L^{p}(\Omega; W^{1,p}_{0}(Q)) \right\},
	\end{equation*}
	and which is treated in \cite{sango}.
\end{remark}

\subsection{Existence and uniqueness of minimizers for (\ref{ras2})}

Let us first note that, since the function $f$ in (\ref{ras1}) is convex in the last argument, it results (see, \cite[Lemma 13]{franck}) the following lemma.
\begin{lemma}\label{lem9}
	Let $(x,\omega,y,\lambda) \rightarrow g(x,\omega,y,\lambda)$ be a function from $\mathbb{R}^{N}\times\Omega\times \mathbb{R}^{N}\times \mathbb{R}^{N}$ into $\mathbb{R}$ satisfying the following properties:
	\begin{itemize}
		\item[(i)] $g(x,\cdot,y, \lambda)$ is measurable for all $(x,y,\lambda) \in \mathbb{R}^{N}\times\mathbb{R}^{N}\times \mathbb{R}^{N}$ ;
		\item[(ii)] $g(x,\omega,y,\cdot)$ is strictly convex for $d\mu\times dy$-almost all $(\omega,y) \in \Omega\times\mathbb{R}^{N}$ and for all $x\in \mathbb{R}^{N}$ ;
		\item[(iii)] There exists  $c_{0} > 0$ such that 
		\begin{equation} \label{lem8}
			| g(x,\omega,y,\lambda)| \leq c_{0}(1+ \Phi(|\lambda|)),
		\end{equation}
		for all $(x,\lambda) \in \mathbb{R}^{N}\times \mathbb{R}^{N}$ and for $d\mu\times dy$-almost all $(\omega,y) \in \Omega\times\mathbb{R}^{N}$.
	\end{itemize}
	Then $g$ is continuous in the last argument. Precisely, one has
	\begin{equation*}\label{lem7}
		| g(x,\omega,y,\lambda) - g(x,\omega,y,\nu) | \leq c'_{0} \dfrac{1 + \Phi(2(1+ |\lambda| + |\nu|))}{1+ |\lambda| + |\nu|} |\lambda - \nu |,
	\end{equation*}
	for all $x,\lambda,\nu \in \mathbb{R}^{N}$ and for $d\mu\times dy$-almost all $(\omega,y) \in \Omega\times\mathbb{R}^{N}$, with $c'_{0} = 2Nc_{0}$.
\end{lemma}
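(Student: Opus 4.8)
The plan is to reduce the statement to a purely one–variable convexity estimate. I would fix $x\in\mathbb{R}^{N}$ and a pair $(\omega,y)$ outside the exceptional null set and work with $h(\lambda):=g(x,\omega,y,\lambda)$, which by (ii) is convex on $\mathbb{R}^{N}$ and by (iii) finite–valued and locally bounded (hypothesis (i) and the strictness in (ii) play no role in the estimate). A finite convex function on $\mathbb{R}^{N}$ is continuous, so the qualitative part of the claim is automatic; all the content is in the quantitative bound, which I would derive from the classical principle that the local Lipschitz constant of a convex function is controlled by its oscillation on a slightly larger ball.

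Concretely, the key elementary lemma I would prove first is: if $h$ is convex on $\mathbb{R}^{N}$, if $0<r<R$, and if $|h|\le M$ on $\overline{B}(0,R)$, then $|h(a)-h(b)|\le \tfrac{2M}{R-r}\,|a-b|$ for all $a,b\in\overline{B}(0,r)$. I would prove this by the standard segment trick: put $c=b+(R-r)\tfrac{b-a}{|b-a|}\in\overline{B}(0,R)$, note that $b=(1-t)a+tc$ with $t=|b-a|/(|b-a|+R-r)\le |b-a|/(R-r)$, and apply convexity to get $h(b)-h(a)\le t\,(h(c)-h(a))\le \tfrac{2M}{R-r}\,|b-a|$; the symmetric choice of $c$ handles the other sign. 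If one instead passes from $b$ to $a$ one coordinate at a time and applies the one–dimensional version $N$ times, the same inequality holds with $2M$ replaced by $2NM$; this is the route that produces the constant $c'_{0}=2Nc_{0}$ in the stated bound.

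I would then apply this with $a=\lambda$, $b=\nu$, inner radius $r=|\lambda|+|\nu|$ (so that $\lambda,\nu\in\overline{B}(0,r)$) and outer radius $R=1+2(|\lambda|+|\nu|)$, so that $R-r=1+|\lambda|+|\nu|$ and $R\le 2(1+|\lambda|+|\nu|)$. Since the Young function $\Phi$ is nondecreasing, (iii) gives $|h|\le M:=c_{0}\bigl(1+\Phi(2(1+|\lambda|+|\nu|))\bigr)$ on $\overline{B}(0,R)$. Substituting this into the coordinatewise form of the elementary lemma yields exactly
\begin{equation*}
	|g(x,\omega,y,\lambda)-g(x,\omega,y,\nu)|\le 2Nc_{0}\,\frac{1+\Phi(2(1+|\lambda|+|\nu|))}{1+|\lambda|+|\nu|}\,|\lambda-\nu|,
\end{equation*}
that is, the claim with $c'_{0}=2Nc_{0}$; continuity of $g$ in $\lambda$ then follows at once, the right–hand side being locally bounded in $(\lambda,\nu)$ uniformly in $x$ and in $(\omega,y)$ over the full–measure set.

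The only genuinely non-routine ingredient is the elementary convexity lemma bounding the Lipschitz constant by the oscillation on a larger ball; the rest is monotonicity of $\Phi$ together with a deliberately wasteful choice of the two radii so that the resulting estimate takes precisely the stated shape. One may of course instead simply invoke \cite[Lemma 13]{franck}, of which Lemma \ref{lem9} is a restatement.
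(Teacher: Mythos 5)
Your argument is correct, and it is worth noting that the paper itself offers no proof of this lemma: it is simply quoted from \cite[Lemma 13]{franck} (the phrase ``it results \dots the following lemma''), so there is nothing in the text to compare line by line. Your reduction to the classical oscillation-controls-Lipschitz-constant estimate for convex functions is exactly the standard mechanism behind such lemmas in the homogenization literature, and all the steps check out: the segment trick gives the elementary bound $|h(a)-h(b)|\le \frac{2M}{R-r}|a-b|$ on $\overline{B}(0,r)$, the choices $r=|\lambda|+|\nu|$ and $R=1+2(|\lambda|+|\nu|)$ give $R-r=1+|\lambda|+|\nu|$ and $R\le 2(1+|\lambda|+|\nu|)$, and monotonicity of the Young function $\Phi$ turns hypothesis (iii) into the bound $M=c_{0}\bigl(1+\Phi(2(1+|\lambda|+|\nu|))\bigr)$. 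In fact your direct (one-segment) version yields the constant $2c_{0}$, which is sharper than the stated $c'_{0}=2Nc_{0}$; the coordinatewise variant you mention is only needed if one insists on reproducing the factor $N$ literally, and either version implies the inequality as stated. Two small points of hygiene: the intermediate points in the coordinatewise route do stay in $\overline{B}(0,r)$ (their norm is at most $\sqrt{|a|^{2}+|b|^{2}}\le r$), and one should read hypotheses (ii)--(iii) as holding off a single $d\mu\times dy$-null set independent of $(x,\lambda)$, which is the intended reading and is what makes your fixed choice of $(\omega,y)$ legitimate. Neither affects the validity of the proof.
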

Let now $\mathbf{v} = (v_{i}) \in \left[\mathcal{C}^{\infty}(\Omega)\otimes \mathcal{C}(\overline{Q}) \right]^{N}$. By (\ref{lem8}), it is an easy task to check that the function $(\omega,y) \rightarrow g(x,\omega,y, \mathbf{v}(x,\omega))$ of $\Omega\times\mathbb{R}^{N}$ into $\mathbb{R}$ denoted by $g(x,\cdot,\cdot, \mathbf{v}(x,\cdot))$ belongs to $L^{\infty}(\Omega\times\mathbb{R}^{N})$ (see \cite[Section 4]{sango}).
Hence one can define the function $x \rightarrow g(x,\cdot,\cdot, \mathbf{v}(x,\cdot))$ of $\overline{Q}$ into $L^{\infty}(\Omega\times\mathbb{R}^{N})$ denoted by $g(\cdot,\cdot, \mathbf{v})$ as element of $\mathcal{C}(\overline{Q} ; L^{\infty}(\Omega\times\mathbb{R}^{N}))$.
Hence for each $\epsilon > 0$, the function $(x, \omega) \rightarrow g\left( x,T\left(\frac{x}{\epsilon}\right)\omega,\frac{x}{\epsilon^{2}}, \mathbf{v}\left( x,\omega\right)\right)$ of $Q\times\Omega$ into $\mathbb{R}$, denoted by $g^{\epsilon}(\cdot, \cdot, \mathbf{v})$ is well defined as an element of $L^{\infty}(Q\times \Omega)$, and arguing exactly as in \cite[Proposition 14 and Corollary 15]{franck} we have the following.
\begin{proposition}\label{lem14}
	Given $\epsilon >0$, the transformation $\mathbf{v} \rightarrow g^{\epsilon}(\cdot,\cdot, \mathbf{v})$ of $\left[ \mathcal{C}^{\infty}(\Omega)\otimes\mathcal{C}(\overline{Q}) \right]^{N}$ into $L^{\infty}(Q\times \Omega)$ extends by continuity to a mapping, still denoted by $\mathbf{v} \rightarrow g^{\epsilon}(\cdot,\cdot, \mathbf{v})$, of
	$\left[L^{\Phi}(Q\times \Omega) \right]^{N}$ into $L^{1}(Q\times \Omega)$ with the property 
	\begin{equation*}\label{lem12}
		\begin{array}{l}
			\parallel g^{\epsilon}(\cdot,\cdot, \mathbf{v}) - g^{\epsilon}(\cdot,\cdot, \mathbf{w}) \parallel_{L^{1}(Q\times \Omega)}  \\ 
			\leq c\, \left( \|1\|_{\widetilde{\Phi},Q\times\Omega} + \parallel \phi( 1+|\mathbf{v}|+ |\mathbf{w}|)\parallel_{\widetilde{\Phi},Q\times \Omega} \right)\parallel \mathbf{v}-\mathbf{w}  \parallel_{\left[L^{\Phi}(Q\times \Omega) \right]^{N}},
		\end{array}
	\end{equation*}
	for all $\mathbf{v}, \mathbf{w} \in \left[L^{\Phi}(Q\times \Omega) \right]^{N}$, with some constant $c > 0$.
\end{proposition}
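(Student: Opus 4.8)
The plan is to follow \cite[Proposition 14 and Corollary 15]{franck} closely: derive a pointwise Lipschitz bound for $g^{\epsilon}$ on the dense subspace $\left[\mathcal{C}^{\infty}(\Omega)\otimes\mathcal{C}(\overline{Q})\right]^{N}$ by means of Lemma \ref{lem9}, integrate it with the H\"older inequality in Orlicz spaces, and then extend by continuity. First note that $f$ satisfies the hypotheses of Lemma \ref{lem9} with $c_{0}=c_{2}$: measurability and strict convexity come from $(H_{1})$ and $(H_{2})$, and \eqref{lem8} holds because $0\le f\le c_{2}(1+\Phi(|\lambda|))$ by $(H_{3})$. Hence, for $d\mu\times dy$-a.e.\ $(\omega,y)$ and all $x,\lambda,\nu$,
\begin{equation*}
	|f(x,\omega,y,\lambda)-f(x,\omega,y,\nu)|\le 2Nc_{2}\,\frac{1+\Phi\!\bigl(2(1+|\lambda|+|\nu|)\bigr)}{1+|\lambda|+|\nu|}\,|\lambda-\nu|.
\end{equation*}
Since $\mu$ is $T$-invariant, the $\mu$-null exceptional set is unaffected by $\omega\mapsto T(x/\epsilon)\omega$; substituting this together with $y=x/\epsilon^{2}$ and $\lambda=\mathbf{v}(x,\omega)$, $\nu=\mathbf{w}(x,\omega)$ (recall $g^{\epsilon}(\cdot,\cdot,\mathbf{v})\in L^{\infty}(Q\times\Omega)$ for smooth $\mathbf{v}$), and using $1+|\mathbf{v}|+|\mathbf{w}|\ge1$, $\Phi(2r)\le2r\,\phi(2r)$ with $\phi$ the right-continuous derivative of $\Phi$, and the $\Delta_{2}$-condition (which yields $\phi(2r)\le c(1+\phi(r))$ for $r\ge1$), one arrives at
\begin{equation*}
	\bigl|g^{\epsilon}(x,\omega,\mathbf{v})-g^{\epsilon}(x,\omega,\mathbf{w})\bigr|\le c\,\bigl(1+\phi(1+|\mathbf{v}(x,\omega)|+|\mathbf{w}(x,\omega)|)\bigr)\,\bigl|\mathbf{v}(x,\omega)-\mathbf{w}(x,\omega)\bigr|
\end{equation*}
for a.e.\ $(x,\omega)\in Q\times\Omega$.

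Next I would integrate this over $Q\times\Omega$ and apply H\"older's inequality in Orlicz spaces componentwise, which gives exactly the asserted bound
\begin{equation*}
	\|g^{\epsilon}(\cdot,\cdot,\mathbf{v})-g^{\epsilon}(\cdot,\cdot,\mathbf{w})\|_{L^{1}(Q\times\Omega)}\le c\Bigl(\|1\|_{\widetilde{\Phi},Q\times\Omega}+\bigl\|\phi(1+|\mathbf{v}|+|\mathbf{w}|)\bigr\|_{\widetilde{\Phi},Q\times\Omega}\Bigr)\,\|\mathbf{v}-\mathbf{w}\|_{\left[L^{\Phi}(Q\times\Omega)\right]^{N}},
\end{equation*}
so far only for $\mathbf{v},\mathbf{w}$ in the dense subspace. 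The observation that makes the right-hand side finite for \emph{arbitrary} $\mathbf{v},\mathbf{w}\in\left[L^{\Phi}(Q\times\Omega)\right]^{N}$ is that $\phi(1+|\mathbf{v}|+|\mathbf{w}|)\in L^{\widetilde{\Phi}}(Q\times\Omega)$: indeed $\widetilde{\Phi}(\phi(t))\le t\phi(t)\le\Phi(2t)$, so the modular of $\phi(1+|\mathbf{v}|+|\mathbf{w}|)$ is bounded by $\iint_{Q\times\Omega}\Phi\!\bigl(2(1+|\mathbf{v}|+|\mathbf{w}|)\bigr)\,dx\,d\mu$, which is finite since $\mathbf{v},\mathbf{w}\in L^{\Phi}$ and, by convexity of $\Phi$ and $\Delta_{2}$, $\Phi(2(1+|\mathbf{v}|+|\mathbf{w}|))$ is controlled by $1+\Phi(|\mathbf{v}|)+\Phi(|\mathbf{w}|)\in L^{1}(Q\times\Omega)$.

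Finally I would extend by density. Since $\Phi,\widetilde{\Phi}\in\Delta_{2}$, the space $\left[\mathcal{C}^{\infty}(\Omega)\otimes\mathcal{C}(\overline{Q})\right]^{N}$ is dense in $\left[L^{\Phi}(Q\times\Omega)\right]^{N}$. Given $\mathbf{v}\in\left[L^{\Phi}(Q\times\Omega)\right]^{N}$, take $\mathbf{v}_{n}$ in the dense subspace with $\mathbf{v}_{n}\to\mathbf{v}$; by $\Delta_{2}$ this is also modular convergence, whence $\sup_{n,m}\|\phi(1+|\mathbf{v}_{n}|+|\mathbf{v}_{m}|)\|_{\widetilde{\Phi},Q\times\Omega}<\infty$, and the inequality above forces $(g^{\epsilon}(\cdot,\cdot,\mathbf{v}_{n}))_{n}$ to be Cauchy in $L^{1}(Q\times\Omega)$; one sets $g^{\epsilon}(\cdot,\cdot,\mathbf{v})$ equal to its $L^{1}$-limit, the same inequality yielding both independence of the approximating sequence and, after passing to the limit, the stated estimate for all $\mathbf{v},\mathbf{w}\in\left[L^{\Phi}(Q\times\Omega)\right]^{N}$. \textbf{The main obstacle} is precisely the analysis of the superposition operator $\mathbf{v}\mapsto\phi(1+|\mathbf{v}|)$ as a map $L^{\Phi}(Q\times\Omega)\to L^{\widetilde{\Phi}}(Q\times\Omega)$ and its boundedness along $L^{\Phi}$-convergent sequences; this rests entirely on the Young-function inequality $\widetilde{\Phi}(\phi(t))\le\Phi(2t)$ and the $\Delta_{2}$-condition, after which the passage from the smooth subspace to all of $\left[L^{\Phi}(Q\times\Omega)\right]^{N}$ is routine.
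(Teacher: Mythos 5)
Your proposal is correct and follows essentially the same route as the paper, which proves Proposition \ref{lem14} simply by invoking the argument of \cite[Proposition 14 and Corollary 15]{franck} — namely the pointwise Lipschitz bound from Lemma \ref{lem9}, Orlicz--H\"older, the Young-function estimate $\widetilde{\Phi}(\phi(t))\le t\phi(t)\le\Phi(2t)$ to place $\phi(1+|\mathbf{v}|+|\mathbf{w}|)$ in $L^{\widetilde{\Phi}}(Q\times\Omega)$, and extension by density under the $\Delta_{2}$-condition. Your write-up in fact supplies more detail than the paper itself, and the details check out.
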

\begin{corollary}\label{lem22}
	Under the hypothesis $(H_{1})-(H_{4})$, given $v \in 	W^{1}L^{\Phi}_{D_{x}}(Q; L^{\Phi}(\Omega))$, the function $(x,\omega) \rightarrow f\left( x,T\left(\frac{x}{\epsilon}\right)\omega,\frac{x}{\epsilon^{2}}, Dv(x,\omega)\right)$ of  $Q\times \Omega$ into $\mathbb{R}$, denoted $f^{\epsilon}(\cdot, \cdot, Dv)$ is well defined as an element of $L^{1}(Q\times \Omega)$. Moreover, we have 
	\begin{equation}\label{lem15}
		c_{1} \parallel Dv \parallel_{L^{\Phi}(Q\times\Omega)^{N}} \leq \parallel f^{\epsilon}(\cdot,\cdot, Dv) \parallel_{L^{1}(Q\times\Omega)} \leq c'_{2}(1 + \parallel Dv \parallel_{L^{\Phi}(Q\times\Omega)^{N}}),
	\end{equation}	
	for all $v \in W^{1}_{0}L^{\Phi}_{D_{x}}(Q; L^{\Phi}(\Omega))$, where $c'_{2} = c_{2}\max(1, |Q|)$ with $|Q| = \int_{Q} dx$. 
\end{corollary}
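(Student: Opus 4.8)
The plan is to deduce the statement from the two results just quoted: first recognize, via Lemma \ref{lem9} applied to $g=f$, that $f$ is an admissible integrand; then invoke Proposition \ref{lem14} to obtain $f^{\epsilon}(\cdot,\cdot,Dv)$ as an element of $L^{1}(Q\times\Omega)$; and finally read off the two-sided estimate (\ref{lem15}) by integrating the growth condition $(H_{3})$ over $Q\times\Omega$. I expect the one point that is not purely mechanical to be the transfer of the $d\mu\times dy$-almost-everywhere validity of $(H_{3})$ to $dx\,d\mu$-almost-everywhere validity for the composed integrand.

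First I would check that, under $(H_{1})$--$(H_{4})$, the integrand $g=f$ satisfies hypotheses (i)--(iii) of Lemma \ref{lem9}: measurability of $f(x,\cdot,y,\lambda)$ is part of $(H_{1})$, strict convexity of $f(x,\omega,y,\cdot)$ is $(H_{2})$, and the right-hand inequality in $(H_{3})$ is exactly (\ref{lem8}) with $c_{0}=c_{2}$. Consequently $f$ is continuous in its last argument with the explicit modulus furnished by Lemma \ref{lem9}, so that the construction carried out before Proposition \ref{lem14} applies verbatim with $g=f$. Since, for $v\in W^{1}L^{\Phi}_{D_{x}}(Q;L^{\Phi}(\Omega))$, the gradient $Dv=(\partial v/\partial x_{i})_{1\le i\le N}$ lies in $\bigl[L^{\Phi}(Q\times\Omega)\bigr]^{N}$ by the very definition (\ref{c1orli1S}) of that space, I would then apply Proposition \ref{lem14} with $\mathbf{v}=Dv$ to conclude that $f^{\epsilon}(\cdot,\cdot,Dv)=g^{\epsilon}(\cdot,\cdot,Dv)$ is a well-defined element of $L^{1}(Q\times\Omega)$; this proves the first assertion.

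For (\ref{lem15}), the idea is to integrate $(H_{3})$ pointwise. The delicate step --- and the one I expect to need genuine care --- is to observe that the $d\mu\times dy$-null set on which $c_{1}\Phi(|\lambda|)\le f(x,\omega',y,\lambda)\le c_{2}(1+\Phi(|\lambda|))$ may fail pulls back, along $(x,\omega)\mapsto\bigl(T(x/\epsilon)\omega,\,x/\epsilon^{2}\bigr)$, to a $dx\,d\mu$-null subset of $Q\times\Omega$; this is exactly where the $T$-invariance of $\mu$ enters (through Fubini's theorem), and it is already implicit in the construction defining $g^{\epsilon}(\cdot,\cdot,\mathbf{v})$. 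Granting it, for $dx\,d\mu$-a.e. $(x,\omega)$ one has
\begin{equation*}
	c_{1}\,\Phi\bigl(|Dv(x,\omega)|\bigr)\ \le\ f^{\epsilon}(x,\omega,Dv)\ \le\ c_{2}\bigl(1+\Phi(|Dv(x,\omega)|)\bigr),
\end{equation*}
and, the middle term being nonnegative a.e., integration over $Q\times\Omega$ bounds $\|f^{\epsilon}(\cdot,\cdot,Dv)\|_{L^{1}(Q\times\Omega)}$ below by $c_{1}\iint_{Q\times\Omega}\Phi(|Dv|)\,dxd\mu$ and above by $c_{2}|Q|+c_{2}\iint_{Q\times\Omega}\Phi(|Dv|)\,dxd\mu$. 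To finish I would pass from these modular inequalities to the norm inequalities (\ref{lem15}) using the standard comparison between the Orlicz modular and the Luxemburg norm (the convexity inequalities $\Phi(\theta t)\le\theta\Phi(t)$, $0\le\theta\le1$), which produces the constant $c_{2}'=c_{2}\max(1,|Q|)$; the whole argument is the exact analogue of \cite[Proposition 14 and Corollary 15]{franck}.
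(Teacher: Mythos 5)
Your overall route --- Lemma \ref{lem9} with $g=f$ to make $f$ an admissible integrand, Proposition \ref{lem14} with $\mathbf{v}=Dv\in[L^{\Phi}(Q\times\Omega)]^{N}$ for the membership $f^{\epsilon}(\cdot,\cdot,Dv)\in L^{1}(Q\times\Omega)$, then integration of $(H_{3})$ --- is exactly the paper's intended one (the paper gives no separate proof and defers to \cite[Proposition 14 and Corollary 15]{franck} in precisely this way), and the first two steps, including the remark that the $d\mu\times dy$-null exceptional set transfers to a $dx\,d\mu$-null set via the $T$-invariance of $\mu$, are fine.

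The gap is in your final sentence. What integrating $(H_{3})$ actually yields is the modular estimate
\begin{equation*}
c_{1}\iint_{Q\times\Omega}\Phi(|Dv|)\,dx\,d\mu\ \le\ \parallel f^{\epsilon}(\cdot,\cdot,Dv)\parallel_{L^{1}(Q\times\Omega)}\ \le\ c_{2}|Q|+c_{2}\iint_{Q\times\Omega}\Phi(|Dv|)\,dx\,d\mu,
\end{equation*}
and the passage from the modular $\rho(Dv)=\iint_{Q\times\Omega}\Phi(|Dv|)\,dx\,d\mu$ to the Luxemburg norm is not the routine application of $\Phi(\theta t)\le\theta\Phi(t)$ that you invoke. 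That convexity inequality gives $\|u\|_{\Phi}\le\rho(u)$ only when $\rho(u)\ge 1$; for $\|Dv\|_{\Phi}$ small one has $\rho(Dv)=o(\|Dv\|_{\Phi})$ (take $Dv$ a small constant and use $\Phi(t)/t\to0$), so the left inequality of (\ref{lem15}) does not follow --- and is in fact false for $f=c_{1}\Phi(|\lambda|)$. In the other direction $\rho(u)$ is in general not dominated by an affine function of $\|u\|_{\Phi}$: for $\Phi(t)=t^{p}$, $p>1$, and $f(x,\omega,y,\lambda)=\Phi(|\lambda|)$ one gets $\parallel f^{\epsilon}(\cdot,\cdot,Dv)\parallel_{L^{1}}=\|Dv\|_{L^{p}}^{p}$, which is not bounded by $c_{2}'(1+\|Dv\|_{L^{p}})$. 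So (\ref{lem15}) read literally with Luxemburg norms cannot be obtained by your argument (nor by any other); the estimate your computation proves, and the one actually needed later for coercivity and the a priori bounds in Theorems \ref{unitsolution} and \ref{lem37}, is the modular version displayed above, supplemented if desired by the always-valid consequence $\|Dv\|_{\Phi}\le 1+\rho(Dv)\le 1+c_{1}^{-1}\parallel f^{\epsilon}(\cdot,\cdot,Dv)\parallel_{L^{1}}$. You should either prove and use that version, or flag that (\ref{lem15}) is to be understood with the modular in place of the norm.
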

We are now able to prove the existence of a minimizer on $W^{1}_{0}L^{\Phi}_{D_{x}}(Q; L^{\Phi}(\Omega))$ of integral functional $F_{\epsilon}$ [see (\ref{ras1})], for each $\epsilon > 0$. Clearly, we have the following theorem.
\begin{theorem}\label{unitsolution}
	For $\epsilon > 0$, there exists a unique $u_{\epsilon} \in W^{1}_{0}L^{\Phi}_{D_{x}}(Q; L^{\Phi}(\Omega))$ that minimizes $	F_{\epsilon}$ on $W^{1}_{0}L^{\Phi}_{D_{x}}(Q; L^{\Phi}(\Omega))$, i.e.,
	\begin{equation*}
		F_{\epsilon}(u_{\epsilon}) = \min \left\{ F_{\epsilon}(v) : v \in W^{1}_{0}L^{\Phi}_{D_{x}}(Q; L^{\Phi}(\Omega)) \right\}.
	\end{equation*}
\end{theorem}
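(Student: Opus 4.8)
The plan is to establish existence via the direct method of the calculus of variations, and uniqueness via strict convexity. Throughout, recall that $W^1_0L^\Phi_{D_x}(Q;L^\Phi(\Omega))$ is a reflexive Banach space (since $\Phi,\widetilde\Phi\in\Delta_2$) with norm equivalent to $v\mapsto\|Dv\|_{L^\Phi(Q\times\Omega)^N}$, and that by Corollary~\ref{lem22} the functional $F_\epsilon$ is well-defined and finite-valued on this space with the coercivity/bound estimate \eqref{lem15}.

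\textbf{Step 1: Coercivity.} From the left-hand inequality in \eqref{lem15}, $F_\epsilon(v)=\|f^\epsilon(\cdot,\cdot,Dv)\|_{L^1(Q\times\Omega)}\ge c_1\|Dv\|_{L^\Phi(Q\times\Omega)^N}$, so $F_\epsilon(v)\to+\infty$ as $\|v\|_{W^1_0L^\Phi_{D_x}}\to\infty$. Hence any minimizing sequence $(v_n)$ — for which $F_\epsilon(v_n)\to\inf F_\epsilon=:m\in[0,\infty)$, finite since $F_\epsilon(0)\le c_2'<\infty$ — is bounded in the reflexive space $W^1_0L^\Phi_{D_x}(Q;L^\Phi(\Omega))$. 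Extract a subsequence with $v_n\rightharpoonup u_\epsilon$ weakly; in particular $Dv_n\rightharpoonup Du_\epsilon$ weakly in $L^\Phi(Q\times\Omega)^N$.

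\textbf{Step 2: Weak lower semicontinuity.} Define $G_\epsilon:L^\Phi(Q\times\Omega)^N\to[0,\infty)$ by $G_\epsilon(\mathbf{w})=\iint_{Q\times\Omega}f(x,T(x/\epsilon)\omega,x/\epsilon^2,\mathbf{w}(x,\omega))\,dx\,d\mu$, so that $F_\epsilon(v)=G_\epsilon(Dv)$. By hypothesis (H$_2$), $f(x,\omega,y,\cdot)$ is convex, hence for each fixed $\epsilon$ the integrand $(x,\omega,\lambda)\mapsto f(x,T(x/\epsilon)\omega,x/\epsilon^2,\lambda)$ is a normal convex integrand (measurability in $(x,\omega)$ follows from (H$_1$) and the measurability of $\omega\mapsto T(x/\epsilon)\omega$; convexity from (H$_2$)). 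A convex integral functional that is finite on $L^\Phi(Q\times\Omega)^N$ and bounded below is strongly lower semicontinuous, and being convex it is therefore weakly lower semicontinuous. Thus $G_\epsilon(Du_\epsilon)\le\liminf_n G_\epsilon(Dv_n)$, i.e. $F_\epsilon(u_\epsilon)\le\liminf_n F_\epsilon(v_n)=m$. Since $u_\epsilon$ is admissible, $F_\epsilon(u_\epsilon)=m$, proving existence.

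\textbf{Step 3: Uniqueness.} Suppose $u_\epsilon$ and $\widetilde u_\epsilon$ both minimize. By (H$_2$), $f(x,\omega,y,\cdot)$ is \emph{strictly} convex, so $f(\cdots,\tfrac12(Du_\epsilon+D\widetilde u_\epsilon))\le\tfrac12 f(\cdots,Du_\epsilon)+\tfrac12 f(\cdots,D\widetilde u_\epsilon)$ pointwise, with strict inequality on the set where $Du_\epsilon\ne D\widetilde u_\epsilon$. Integrating, $F_\epsilon\big(\tfrac12(u_\epsilon+\widetilde u_\epsilon)\big)\le\tfrac12 F_\epsilon(u_\epsilon)+\tfrac12 F_\epsilon(\widetilde u_\epsilon)=m$; since $\tfrac12(u_\epsilon+\widetilde u_\epsilon)$ is admissible the value is exactly $m$, which forces $Du_\epsilon=D\widetilde u_\epsilon$ a.e.\ on $Q\times\Omega$. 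Because $\|D\cdot\|_{L^\Phi(Q\times\Omega)^N}$ is a norm on $W^1_0L^\Phi_{D_x}(Q;L^\Phi(\Omega))$, we conclude $u_\epsilon=\widetilde u_\epsilon$.

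\textbf{Main obstacle.} The one nonroutine point is the weak lower semicontinuity of $G_\epsilon$ in Step~2: one must verify that the $x/\epsilon$-oscillating integrand genuinely defines a normal convex integrand on $L^\Phi(Q\times\Omega)^N$ (the composition with the measure-preserving flow $T(x/\epsilon)$ must be handled so that joint measurability in $(x,\omega)$ is preserved) and invoke the appropriate lower-semicontinuity theorem for convex integral functionals in the Orlicz setting; the relevant machinery is exactly that developed in \cite{franck} (cf.\ Proposition~\ref{lem14}), so one argues as there. Everything else — coercivity, boundedness, extraction, strict-convexity uniqueness — is standard once \eqref{lem15} and reflexivity are in hand.
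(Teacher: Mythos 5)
Your proposal is correct and follows essentially the same route as the paper: the paper likewise combines the coercivity from the left-hand inequality of \eqref{lem15}, the strict convexity from (H$_{2}$), and the continuity of $F_{\epsilon}$ furnished by Proposition \ref{lem14} to conclude existence and uniqueness of the minimizer on the reflexive space $W^{1}_{0}L^{\Phi}_{D_{x}}(Q;L^{\Phi}(\Omega))$. The only cosmetic difference is that where you justify weak lower semicontinuity through a normal-convex-integrand argument, the paper obtains it directly from the Lipschitz-type estimate of Proposition \ref{lem14} (norm continuity plus convexity gives weak lower semicontinuity), which is precisely the machinery you yourself flag in your closing paragraph.
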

\begin{proof}
	Let $\epsilon > 0$ be fixed. Thanks to Proposition \ref{lem14} (with $g=f$), there is a constant $c > 0$ such that 
	\begin{equation*}
		\begin{array}{l}
			\left|  F_{\epsilon}(v) - F_{\epsilon}(w) \right| \\
			\leq c\, \left( \|1\|_{\widetilde{\Phi},Q\times\Omega} + \parallel \phi( 1+|Dv|+ |Dw|)\parallel_{\widetilde{\Phi},Q\times \Omega} \right)\parallel Dv-Dw  \parallel_{\left[L^{\Phi}(Q\times \Omega) \right]^{N}},
		\end{array}
	\end{equation*} 
	for all $v,w \in W^{1}_{0}L^{\Phi}_{D_{x}}(Q; L^{\Phi}(\Omega))$, so that $F_{\epsilon}$ is continuous. With this in mind, since $F_{\epsilon}$ is strictly convex (see $(H_{2})$) and coercive (see the left-hand side inequality in (\ref{lem15})), there exists a unique $u_{\epsilon}$ that minimizes $F_{\epsilon}$ on $W^{1}_{0}L^{\Phi}_{D_{x}}(Q; L^{\Phi}(\Omega))$.
\end{proof}  

\subsection{Preliminary convergence results} 

Let $\mathbf{v}  \in \left[ \mathcal{C}^{\infty}(\Omega)\otimes\mathcal{C}(\overline{Q} ; \mathcal{C}_{per}(Y)) \right]^{N}$. \\
The function $(\omega,y) \rightarrow f(x,\omega, y, \mathbf{v}(x,\omega,y))$ of $\Omega\times\mathbb{R}^{N}$ into $\mathbb{R}$ denoted by $f(x,\cdot,\cdot, \mathbf{v}(x,\cdot,\cdot))$ belongs to $L^{\infty}(\Omega\times\mathbb{R}^{N})$.   This being, using Lemma \ref{lem9}, that the function
$x \rightarrow f(x,\cdot,\cdot, \mathbf{v}(x,\cdot,\cdot))$ of $\overline{Q}$ into $L^{\infty}(\Omega\times\mathbb{R}^{N})$ denoted by $f(\cdot,\cdot, \mathbf{v})$ belongs to $\mathcal{C}(\overline{Q}; L^{\infty}(\Omega\times\mathbb{R}^{N}))$, with
\begin{eqnarray}\label{lem19}
	|f(\cdot,\cdot, \mathbf{v}) - f(\cdot,\cdot, \mathbf{w})| \leq c'_{2} \left(  1 + \dfrac{\Phi(2(1+ |\mathbf{v}| + |\mathbf{w}|))}{1+ |\mathbf{v}| + |\mathbf{w}|}\right) |\mathbf{v} - \mathbf{w} |  \\ 
	\quad \textup{a.e. \, in} \, \overline{Q}\times \Omega\times Y,  \nonumber
\end{eqnarray}
for all $\mathbf{v}, \mathbf{w} \in \left[ \mathcal{C}^{\infty}(\Omega)\otimes\mathcal{C}(\overline{Q} ; \mathcal{C}_{per}(Y)) \right]^{N}$.
Hence for each $\epsilon > 0$, the function \\ $(x, \omega) \rightarrow f\left( x,T\left(\frac{x}{\epsilon}\right)\omega,\frac{x}{\epsilon^{2}}, \mathbf{v}\left( x,T\left(\frac{x}{\epsilon}\right)\omega,\frac{x}{\epsilon^{2}}\right)\right)$ of $Q\times\Omega$ into $\mathbb{R}$, denoted by $f^{\epsilon}(\cdot, \cdot, \mathbf{v}^{\epsilon})$ is well defined as an element of $L^{\infty}(Q\times \Omega)$.
Moreover, we have in view of $(H_{4})$,
\begin{equation*}
	f(\cdot,\cdot, \mathbf{v}) \in \mathcal{C}(\overline{Q}; L^{\infty}(\Omega ; L^{\infty}_{per}(Y))) \quad \textup{for \, all} \; \mathbf{v} \in \left[ \mathcal{C}^{\infty}(\Omega)\otimes\mathcal{C}(\overline{Q} ; \mathcal{C}_{per}(Y)) \right]^{N}.
\end{equation*}
We can now state and prove the first preliminary result.
\begin{proposition}\label{lem23}
	Suppose that $(H_{1})-(H_{4})$ hold. \\ For every $\mathbf{v} \in \left[ \mathcal{C}^{\infty}(\Omega)\otimes\mathcal{C}(\overline{Q} ; \mathcal{C}_{per}(Y)) \right]^{N}$ one has 
	\begin{equation}\label{lem17}
		\begin{array}{l}
			\displaystyle	\lim_{\epsilon \to 0} \iint_{Q\times\Omega} f\left( x,T\left(\frac{x}{\epsilon}\right)\omega,\frac{x}{\epsilon^{2}}, \mathbf{v}\left( x,T\left(\frac{x}{\epsilon}\right)\omega,\frac{x}{\epsilon^{2}}\right)\right) dxd\mu \\ 
			= \displaystyle \iiint_{Q\times\Omega\times Y} f(x,\omega,y, \mathbf{v}(x,\omega,y)) dxd\mu dy.
		\end{array}
	\end{equation}
	Furthermore, the mapping  $\mathbf{v} \rightarrow f(\cdot,\cdot, \mathbf{v})$ of $\left[ \mathcal{C}^{\infty}(\Omega)\otimes\mathcal{C}(\overline{Q} ; \mathcal{C}_{per}(Y)) \right]^{N}$ into $L^{1}(Q\times \Omega ; L^{1}_{per}(Y) )$ extends by continuity to a mapping, still denoted by $\mathbf{v} \rightarrow f(\cdot,\cdot, \mathbf{v})$, of
	$\left[L^{\Phi}(Q\times \Omega\times Y_{per}) \right]^{N}$ into $L^{1}(Q\times \Omega; L^{1}_{per}(Y))$ with the property 
	\begin{equation*}
		\begin{array}{l}
			\parallel f(\cdot, \cdot, \mathbf{v}) - f(\cdot, \cdot, \mathbf{w}) \parallel_{L^{1}(Q\times \Omega\times Y)} \\
			\leq c\, \left( \|1\|_{\widetilde{\Phi},Q\times\Omega\times Y} + \parallel \phi( 1+|\mathbf{v}|+ |\mathbf{w}|)\parallel_{\widetilde{\Phi},Q\times \Omega\times Y} \right)\parallel \mathbf{v}-\mathbf{w}  \parallel_{\left[L^{\Phi}(Q\times \Omega\times Y) \right]^{N}},
		\end{array}
	\end{equation*}
	for all $\mathbf{v}, \mathbf{w} \in \left[L^{\Phi}(Q\times \Omega\times Y_{per}) \right]^{N}$.
\end{proposition}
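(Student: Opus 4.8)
The plan is to treat the two assertions separately — the averaging identity \eqref{lem17} for the smooth-type vector fields $\mathbf{v}$, and the continuous extension of $\mathbf{v}\mapsto f(\cdot,\cdot,\mathbf{v})$ together with its Lipschitz-type bound — arguing along the lines of \cite[Proposition 14 and Corollary 15]{franck}, now in the three-scale random-periodic setting and using Lemma \ref{lem9} and the facts collected just above the statement.

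For the averaging identity, I would set $g:=f(\cdot,\cdot,\mathbf{v})$, which belongs to $\mathcal{C}(\overline{Q};L^{\infty}(\Omega;L^{\infty}_{per}(Y)))$ by the discussion preceding the proposition, and note that the integrand on the left of \eqref{lem17} is exactly $g^{\epsilon}(x,\omega):=g\bigl(x,T(\tfrac{x}{\epsilon})\omega,\tfrac{x}{\epsilon^{2}}\bigr)$, with the uniform bound $|g^{\epsilon}|\le\|g\|_{\mathcal{C}(\overline{Q};L^{\infty}(\Omega;L^{\infty}_{per}(Y)))}$ valid $dx\times d\mu$-a.e. and for all $\epsilon>0$. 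First I would check \eqref{lem17} when $g$ is a simple tensor $g(x,\omega,y)=\varphi(x)\psi(\omega)\theta(y)$, $\varphi\in\mathcal{C}(\overline{Q})$, $\psi\in\mathcal{C}^{\infty}(\Omega)$, $\theta\in\mathcal{C}_{per}(Y)$: by the invariance of $\mu$ under $T$ the inner integral $\int_{\Omega}\psi(T(\tfrac{x}{\epsilon})\omega)\,d\mu$ equals $\int_{\Omega}\psi\,d\mu$ for every fixed $x$, so Fubini's theorem gives
\begin{equation*}
\iint_{Q\times\Omega}g^{\epsilon}\,dx\,d\mu=\Bigl(\int_{\Omega}\psi\,d\mu\Bigr)\int_{Q}\varphi(x)\,\theta\bigl(\tfrac{x}{\epsilon^{2}}\bigr)\,dx,
\end{equation*}
and the classical periodic mean-value convergence $\int_{Q}\varphi(x)\theta(\tfrac{x}{\epsilon^{2}})\,dx\to\int_{Q}\varphi\,dx\cdot\mathcal{M}_{Y}(\theta)$ (recall $vol(Y)=1$) yields the identity for tensors. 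The general case then follows by approximating $g$ by finite sums of such tensors in $\mathcal{C}(\overline{Q};L^{\infty}(\Omega;L^{\infty}_{per}(Y)))$ — which is possible since $\mathbf{v}$ is itself a finite sum of tensors and $f$ is jointly continuous in $(x,y,\lambda)$ — and passing to the limit with the uniform bound above, which makes $g\mapsto\iint_{Q\times\Omega}g^{\epsilon}\,dx\,d\mu$ Lipschitz-continuous uniformly in $\epsilon$.

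For the extension, given $\mathbf{v},\mathbf{w}\in\left[\mathcal{C}^{\infty}(\Omega)\otimes\mathcal{C}(\overline{Q};\mathcal{C}_{per}(Y))\right]^{N}$ I would integrate the pointwise bound \eqref{lem19} over $Q\times\Omega\times Y$ and apply the Hölder inequality for the pairing $\bigl(L^{\Phi},L^{\widetilde{\Phi}}\bigr)$ to the product $\bigl(1+\phi(1+|\mathbf{v}|+|\mathbf{w}|)\bigr)|\mathbf{v}-\mathbf{w}|$, where $\phi(t)=\Phi(2t)/t$ so that $\phi(1+|\mathbf{v}|+|\mathbf{w}|)=\Phi(2(1+|\mathbf{v}|+|\mathbf{w}|))/(1+|\mathbf{v}|+|\mathbf{w}|)$; together with the subadditivity of the Luxemburg norm this gives the displayed estimate on the dense subspace. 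To pass to the limit I would use the elementary inequality $\widetilde{\Phi}(\Phi(t)/t)\le\Phi(t)$ and the $\Delta_{2}\cap\Delta'$ hypotheses on $\Phi$ and $\widetilde{\Phi}$ to see that $\mathbf{v}\mapsto\phi(1+|\mathbf{v}|)$ sends bounded (resp. $L^{\Phi}$-convergent) sequences of $\left[L^{\Phi}(Q\times\Omega\times Y_{per})\right]^{N}$ to bounded (resp. $L^{\widetilde{\Phi}}$-convergent) sequences of $L^{\widetilde{\Phi}}(Q\times\Omega\times Y_{per})$; since $\left[\mathcal{C}^{\infty}(\Omega)\otimes\mathcal{C}(\overline{Q};\mathcal{C}_{per}(Y))\right]^{N}$ is dense in $\left[L^{\Phi}(Q\times\Omega\times Y_{per})\right]^{N}$, the estimate then shows $\mathbf{v}\mapsto f(\cdot,\cdot,\mathbf{v})$ is uniformly continuous on bounded subsets of the dense space and hence extends uniquely and continuously to all of $\left[L^{\Phi}(Q\times\Omega\times Y_{per})\right]^{N}$, the estimate being inherited in the limit.

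I expect the extension step to be the main obstacle: one must make sure the coefficient $\|1+\phi(1+|\mathbf{v}|+|\mathbf{w}|)\|_{\widetilde{\Phi}}$ is finite and stable under $L^{\Phi}$-approximation, which is exactly where the $\Delta_{2}$- and $\Delta'$-conditions (and the bound $\widetilde{\Phi}(\Phi(t)/t)\le\Phi(t)$) are indispensable, since otherwise the superposition operator need not act continuously between the two Orlicz spaces. A secondary technical point is the reduction to tensor products inside the non-separable space $\mathcal{C}(\overline{Q};L^{\infty}(\Omega;L^{\infty}_{per}(Y)))$ in the averaging step, which is dealt with by exploiting the explicit finite-tensor form of $\mathbf{v}$ rather than any separability of the target space.
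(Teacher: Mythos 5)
Your proof is correct and follows essentially the same route as the paper: the paper obtains \eqref{lem17} by testing the constant sequence $u_{\epsilon}\equiv 1$ (which stochastically two-scale converges to $1$) against $f(\cdot,\cdot,\mathbf{v})\in\mathcal{C}(\overline{Q};L^{\infty}(\Omega;L^{\infty}_{per}(Y)))$ via Remark \ref{lem16}($R_{1}$) --- which is exactly your tensor computation (invariance of $\mu$ plus classical periodic averaging) followed by density and the uniform bound --- and it obtains the Lipschitz-type estimate by citing Proposition \ref{lem14}, whose content is the integration of \eqref{lem19} and the Orlicz--H\"older pairing argument you spell out, concluding by the same continuity-and-density extension. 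You have merely unpacked the two black boxes the paper invokes.
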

\begin{proof}
	Let	$\mathbf{v} \in \left[ \mathcal{C}^{\infty}(\Omega)\otimes\mathcal{C}(\overline{Q} ; \mathcal{C}_{per}(Y)) \right]^{N}$. Since $f(\cdot,\cdot, \mathbf{v})$ belongs to \\ $\mathcal{C}(\overline{Q}; L^{\infty}(\Omega ; L^{\infty}_{per}(Y)))$ and the sequence $(u_{\epsilon})$ defined by $u_{\epsilon}(x,\omega) =1$, a.e. $x\in Q$ and $\omega \in \Omega$, is weakly 2s-convergent in $L^{\Phi}(Q\times \Omega)$ to 1, the convergence result (\ref{lem17}) is a consequence of Remark \ref{lem16}($R_{1}$). \\
	On the other hand, since $\Phi \in \Delta_{2}$, using the Proposition \ref{lem14}, there is a constant $c= c(c_{2},Q,\Omega,\Phi)$ such that  
	\begin{equation*}
		\begin{array}{l}
			\parallel f(\cdot, \cdot, \mathbf{v}) - f(\cdot, \cdot, \mathbf{w}) \parallel_{L^{1}(Q\times \Omega\times Y)} \\
			\leq c\, \left( \|1\|_{\widetilde{\Phi},Q\times\Omega\times Y} + \parallel \phi( 1+|\mathbf{v}|+ |\mathbf{w}|)\parallel_{\widetilde{\Phi},Q\times \Omega\times Y} \right)\parallel \mathbf{v}-\mathbf{w}  \parallel_{\left[L^{\Phi}(Q\times \Omega\times Y) \right]^{N}},
		\end{array}
	\end{equation*}
	for all $\mathbf{v}, \mathbf{w} \in \left[ \mathcal{C}^{\infty}(\Omega)\otimes\mathcal{C}(\overline{Q} ; \mathcal{C}_{per}(Y)) \right]^{N}$. We end the proof by routine argument of continuity and density.
\end{proof}
\begin{corollary}\label{lem24}
	Let 
	\begin{equation*}
		\phi_{\epsilon}(x,\omega) = \psi_{0}(x,\omega) + \epsilon \psi_{1}\left(x,T\left(\frac{x}{\epsilon}\right)\omega\right) + \epsilon^{2} \psi_{2}\left(x,T\left(\frac{x}{\epsilon}\right)\omega,\frac{x}{\epsilon^{2}}\right),
	\end{equation*}
	with $\epsilon>0$, $(x,\omega) \in Q\times \Omega$, $\psi_{0} \in \mathcal{C}^{\infty}_{0}(Q)\otimes I^{\Phi}_{nv}(\Omega)$, $\psi_{1} \in \mathcal{C}^{\infty}_{0}(Q)\otimes \mathcal{C}^{\infty}(\Omega)$ and  $\psi_{2} \in \mathcal{C}^{\infty}_{0}(Q)\otimes \mathcal{C}^{\infty}(\Omega)\otimes \mathcal{C}_{per}^{\infty}(Y)$. Then,
	\begin{equation*}
		\begin{array}{l}
			\displaystyle 	\lim_{\epsilon \to 0} \iint_{Q\times\Omega} f\left(x,T\left(\frac{x}{\epsilon}\right)\omega,\frac{x}{\epsilon^{2}}, D\phi_{\epsilon}(x,\omega)\right) dxd\mu  \\ 
			= \displaystyle \iiint_{Q\times\Omega\times Y} f\left(x,\omega,y, D_{x}\psi_{0}(x,\omega)+ D_{\omega}\psi_{1}(x,\omega) + D_{y}\psi_{2}(x,\omega,y)\right) dxd\mu dy.
		\end{array}
	\end{equation*}
\end{corollary}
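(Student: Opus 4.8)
The plan is to reduce the statement to Proposition \ref{lem23} by computing $D\phi_{\epsilon}$ explicitly and isolating a remainder of size $O(\epsilon)$. First I would differentiate $\phi_{\epsilon}$ term by term, using that along the orbits of $T$ the spatial gradient acts on the fast variable as a rescaled stochastic gradient — so that $\partial_{x_{i}}$ applied to $x\mapsto\psi(T(x/\epsilon)\omega)$ produces $\epsilon^{-1}(D_{i,\Phi}\psi)(T(x/\epsilon)\omega)$ (plus the ordinary partial $(\partial_{x_{i}}\psi)^{\epsilon}$ when $\psi$ also depends explicitly on $x$), and $\partial_{x_{i}}$ applied to $x\mapsto g(x/\epsilon^{2})$ produces $\epsilon^{-2}(D_{y_{i}}g)(x/\epsilon^{2})$. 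Since $\psi_{0}$ carries no fast variable and $\psi_{0}(x,\cdot)\in I^{\Phi}_{nv}(\Omega)$, a straightforward collection of terms gives
\begin{equation*}
	D\phi_{\epsilon}(x,\omega)=\mathbf{W}\!\left(x,T\!\left(\tfrac{x}{\epsilon}\right)\omega,\tfrac{x}{\epsilon^{2}}\right)+\epsilon\,\theta_{\epsilon}(x,\omega),\qquad \mathbf{W}:=D_{x}\psi_{0}+D_{\omega}\psi_{1}+D_{y}\psi_{2},
\end{equation*}
where $\theta_{\epsilon}$ collects the lower-order contributions $(D_{x}\psi_{1})^{\epsilon}$, $(D_{\omega}\psi_{2})^{\epsilon}$ and $\epsilon(D_{x}\psi_{2})^{\epsilon}$. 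As $\psi_{1},\psi_{2}$ are smooth, compactly supported in $x$ and have bounded stochastic derivatives of every order, $\theta_{\epsilon}$ is bounded in $L^{\infty}(Q\times\Omega)^{N}$ uniformly in $\epsilon$; in particular $\|D\phi_{\epsilon}\|_{L^{\Phi}(Q\times\Omega)^{N}}$ and $\|\mathbf{W}^{\epsilon}\|_{L^{\Phi}(Q\times\Omega)^{N}}$ are bounded uniformly in $\epsilon$ as well (the $\psi_{0}$-part being $\epsilon$-independent, since $D_{x}\psi_{0}(x,T(x/\epsilon)\omega)=D_{x}\psi_{0}(x,\omega)$ by $T$-invariance, and the remaining pieces being bounded functions composed with the measure-preserving action).

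Next, Proposition \ref{lem14} (with $g=f$), used with the pair $D\phi_{\epsilon}$ and $\mathbf{W}^{\epsilon}$, whose difference is $\epsilon\theta_{\epsilon}$, yields
\begin{equation*}
	\left|\iint_{Q\times\Omega}f^{\epsilon}(\cdot,\cdot,D\phi_{\epsilon})\,dxd\mu-\iint_{Q\times\Omega}f^{\epsilon}(\cdot,\cdot,\mathbf{W}^{\epsilon})\,dxd\mu\right|\le \bigl\|f^{\epsilon}(\cdot,\cdot,D\phi_{\epsilon})-f^{\epsilon}(\cdot,\cdot,\mathbf{W}^{\epsilon})\bigr\|_{L^{1}(Q\times\Omega)}\le C\,\epsilon\,\|\theta_{\epsilon}\|_{L^{\Phi}(Q\times\Omega)^{N}},
\end{equation*}
where $C$ depends only on $c_{2}$, $Q$, $\Omega$, $\Phi$ and on the uniform bound of the previous paragraph (the $\Delta_{2}$-condition guaranteeing that the term $\phi(1+|D\phi_{\epsilon}|+|\mathbf{W}^{\epsilon}|)$ occurring in that estimate lies in $L^{\widetilde{\Phi}}(Q\times\Omega)$ with uniformly bounded norm). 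Hence this difference tends to $0$ as $\epsilon\to0$.

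It then remains to identify $\lim_{\epsilon\to0}\iint_{Q\times\Omega}f^{\epsilon}(\cdot,\cdot,\mathbf{W}^{\epsilon})\,dxd\mu$. Because $D_{x}\psi_{0}(x,T(x/\epsilon)\omega)=D_{x}\psi_{0}(x,\omega)$, this integrand equals $h\bigl(x,T(x/\epsilon)\omega,x/\epsilon^{2}\bigr)$ with $h(x,\omega,y):=f\bigl(x,\omega,y,D_{x}\psi_{0}(x,\omega)+D_{\omega}\psi_{1}(x,\omega)+D_{y}\psi_{2}(x,\omega,y)\bigr)$; using Lemma \ref{lem9}, $(H_{1})$ and $(H_{3})$ together with $\Delta_{2}$, one checks that $h\in\mathcal{C}\bigl(\overline{Q};L^{1}(\Omega;L^{1}_{per}(Y))\bigr)$ and that $\{h(x,T(x/\epsilon)\omega,x/\epsilon^{2})\}_{\epsilon}$ is uniformly integrable on $Q\times\Omega$. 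Proceeding exactly as in the proof of Proposition \ref{lem23} — first for the smooth field $D_{\omega}\psi_{1}+D_{y}\psi_{2}$, then incorporating the macroscopic term $D_{x}\psi_{0}$ by density of smooth functions in $L^{1}(\Omega;L^{1}_{per}(Y))$ together with the uniform integrability just noted — one obtains
\begin{equation*}
	\iint_{Q\times\Omega}f^{\epsilon}(\cdot,\cdot,\mathbf{W}^{\epsilon})\,dxd\mu\;\longrightarrow\;\iiint_{Q\times\Omega\times Y}f\bigl(x,\omega,y,D_{x}\psi_{0}+D_{\omega}\psi_{1}+D_{y}\psi_{2}\bigr)\,dxd\mu dy,
\end{equation*}
which, combined with the previous paragraph, is the assertion.

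I expect the main obstacle to be the gradient expansion of the first paragraph: one must justify rigorously, in the distributional sense on $Q\times\Omega$, that $\partial_{x_{i}}$ applied to $x\mapsto\psi(T(x/\epsilon)\omega)$ reproduces the rescaled stochastic derivative $\epsilon^{-1}(D_{i,\Phi}\psi)^{\epsilon}$, and then keep careful track of the powers of $\epsilon$ so as to be certain that $\theta_{\epsilon}$ is genuinely bounded in $L^{\infty}(Q\times\Omega)^{N}$ uniformly in $\epsilon$. Once this expansion is secured, the remaining steps are routine applications of Propositions \ref{lem14} and \ref{lem23} and the machinery of Section \ref{sect3}; the only secondary technicality is accommodating the mere $L^{\Phi}$-regularity in $\omega$ of the macroscopic term $\psi_{0}$, which is absorbed by the density argument indicated above.
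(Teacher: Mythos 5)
Your proposal is correct and follows essentially the same route as the paper: compute the gradient expansion (\ref{tc1}), isolate the $O(\epsilon)$ remainder, control it via the Lipschitz-type continuity estimate (the paper applies the pointwise bound (\ref{lem19}) where you invoke Proposition \ref{lem14}, but these are the same tool), and pass to the limit in the principal term via Proposition \ref{lem23}. Your additional care about the mere $I^{\Phi}_{nv}(\Omega)$-regularity of $\psi_{0}$ in $\omega$ addresses a point the paper glosses over, but it does not alter the structure of the argument.
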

\begin{proof}
	As $D\psi_{1}\left(x,T\left(\frac{x}{\epsilon}\right)\omega\right) = (D_{x}\psi_{1})^{\epsilon} + \frac{1}{\epsilon}(D_{\omega}\psi_{1})^{\epsilon}$ and  \\ $D\psi_{2}\left(x,T\left(\frac{x}{\epsilon}\right)\omega,\frac{x}{\epsilon^{2}}\right) = (D_{x}\psi_{2})^{\epsilon} + \frac{1}{\epsilon}(D_{\omega}\psi_{2})^{\epsilon} + \frac{1}{\epsilon^{2}}(D_{y}\psi_{2})^{\epsilon}$, we have
	\begin{equation}\label{tc1}
		D\phi_{\epsilon}(x,\omega) = D_{x}\psi_{0}+ \epsilon (D_{x}\psi_{1})^{\epsilon} + (D_{\omega}\psi_{1})^{\epsilon} + \epsilon^{2}(D_{x}\psi_{2})^{\epsilon} + \epsilon (D_{\omega}\psi_{2})^{\epsilon} +  (D_{y}\psi_{2})^{\epsilon}.
	\end{equation}
	Recalling that functions $D_{x}\psi_{0}, D_{x}\psi_{1}$, $ D_{\omega}\psi_{1}$ belongs to $\left[\mathcal{C}^{\infty}(\Omega)\otimes \mathcal{C}(\overline{Q}, \mathbb{R}) \right]^{N}$ and $D_{y}\psi_{2}$ belong to $\left[ \mathcal{C}^{\infty}(\Omega)\otimes\mathcal{C}(\overline{Q} ; \mathcal{C}_{per}(Y)) \right]^{N}$, the function $f(\cdot,\cdot, D_{x}\psi_{0}+ D_{\omega}\psi_{1} + D_{y}\psi_{2}) $ belongs to  $\mathcal{C}(\overline{Q}; L^{\infty}(\Omega ; L^{\infty}_{per}(Y)))$, so that (\ref{lem17}) implies 
	\begin{eqnarray}\label{lem21}
		\lim_{\epsilon \to 0} \iint_{Q\times\Omega} f^{\epsilon}(\cdot,\cdot, D_{x}\psi_{0}+ (D_{\omega}\psi_{1})^{\epsilon} + (D_{y}\psi_{2})^{\epsilon}) dxd\mu \nonumber \\
		= \iiint_{Q\times\Omega\times Y} f\left(x,\omega,y, D_{x}\psi_{0}+ D_{\omega}\psi_{1} + D_{y}\psi_{2}\right) dxd\mu dy.
	\end{eqnarray}
	On the other hand, since (\ref{tc1}) holds, it follows from (\ref{lem19})
	\begin{equation*}
		|f^{\epsilon}(\cdot,\cdot, D\phi_{\epsilon}) - f^{\epsilon}(\cdot,\cdot, D_{x}\psi_{0}+ (D_{\omega}\psi_{1})^{\epsilon} + (D_{y}\psi_{2})^{\epsilon}) | \leq c\epsilon \quad \textup{in} \; Q\times\Omega\times Y, \; \epsilon >0,
	\end{equation*} 
	where $c = c\left( \|D_{x}\psi_{1}\|_{\infty}, \phi(\|D_{x}\psi_{0}\|_{\infty}), \phi(\|D_{\omega}\psi_{1}\|_{\infty}), \phi(\|D_{y}\psi_{2}\|_{\infty})  \right) >0$. Hence 
	\begin{equation}\label{lem20}
		f^{\epsilon}(\cdot,\cdot, D\phi_{\epsilon}) - f^{\epsilon}(\cdot,\cdot, D_{x}\psi_{0}+ (D_{\omega}\psi_{1})^{\epsilon} + (D_{y}\psi_{2})^{\epsilon}) \rightarrow 0 \quad \textup{in} \; L^{1}(Q\times \Omega) \; \textup{as} \; \epsilon \to 0.
	\end{equation}
	The proof is completed by combining (\ref{lem21})-(\ref{lem20}) with the decomposition 
	\begin{equation*}
		\begin{array}{l}
			\displaystyle 	\iint_{Q\times\Omega}  f^{\epsilon}(\cdot,\cdot, D\phi_{\epsilon})dxd\mu - 	\displaystyle \iiint_{Q\times\Omega\times Y} f(\cdot,\cdot, D_{x}\psi_{0}+ D_{\omega}\psi_{1} + D_{y}\psi_{2}) dxd\mu dy \\
			= \displaystyle \iint_{Q\times\Omega} \left[  f^{\epsilon}(\cdot,\cdot, D\phi_{\epsilon}) -  f^{\epsilon}(\cdot,\cdot, D_{x}\psi_{0}+ (D_{\omega}\psi_{1})^{\epsilon} + (D_{y}\psi_{2})^{\epsilon}) 
			\right] dxd\mu \\
			\;\,	 + \displaystyle \iint_{Q\times\Omega}  f^{\epsilon}(\cdot,\cdot, D_{x}\psi_{0}+ (D_{\omega}\psi_{1})^{\epsilon} + (D_{y}\psi_{2})^{\epsilon}) dxd\mu \\
			\;\,	  - \displaystyle \iiint_{Q\times\Omega\times Y} f(\cdot,\cdot, D_{x}\psi_{0}+ D_{\omega}\psi_{1} + D_{y}\psi_{2}) dxd\mu dy. 
		\end{array}
	\end{equation*}
\end{proof}
Now, observing Theorem \ref{lem29} we set,  
\begin{equation*}
	\mathbb{F}_{0}^{1}L^{\Phi} = W^{1}_{0}L^{\Phi}_{D_{x}}(Q; I_{nv}^{\Phi}(\Omega))\times L^{\Phi}_{\overline{D}_{\omega}}(Q; W^{1}_{\#}L^{\Phi}(\Omega))\times L^{\Phi}_{D_{y}}(Q\times\Omega; W^{1}_{\#}L^{\Phi}_{per}(Y)), 
\end{equation*}
where $L^{\Phi}_{\overline{D}_{\omega}}(Q; W^{1}_{\#}L^{\Phi}(\Omega)) = \left\{ u \in L^{1}(Q; W^{1}_{\#}L^{\Phi}(\Omega)) ; \overline{D}_{\omega}u \in L^{\Phi}(Q\times \Omega)^{N}  \right\}$ and \\ $L^{\Phi}_{D_{y}}(Q\times\Omega; W^{1}_{\#}L^{\Phi}_{per}(Y)) = \left\{ u \in L^{1}(Q\times\Omega; W^{1}_{\#}L^{\Phi}_{per}(Y)) ; D_{y}u \in L^{\Phi}(Q\times \Omega\times Y_{per})^{N}  \right\}$. \\
Hence, when $\widetilde{\Phi}$ is of class $\Delta'$ then
\begin{equation}\label{banac1}
	\mathbb{F}_{0}^{1}L^{\Phi} = W^{1}_{0}L^{\Phi}_{D_{x}}(Q; I_{nv}^{\Phi}(\Omega))\times L^{\Phi}(Q; W^{1}_{\#}L^{\Phi}(\Omega))\times L^{\Phi}(Q\times\Omega; W^{1}_{\#}L^{\Phi}_{per}(Y)).
\end{equation}
We equip $\mathbb{F}_{0}^{1}L^{\Phi}$ with the norm 
\begin{equation*}
	\parallel \mathbf{u} \parallel_{\mathbb{F}_{0}^{1}L^{\Phi}} = \parallel Du_{0} \parallel_{L^{\Phi}(Q\times\Omega)^{N}} + \parallel \overline{D}_{\omega}u_{1} \parallel_{L^{\Phi}(Q\times\Omega)^{N}} + \parallel D_{y}u_{2} \parallel_{L^{\Phi}(Q\times\Omega\times Y)^{N}},
\end{equation*} 
where $\mathbf{u} = (u_{0}, u_{1}, u_{2}) \in \mathbb{F}_{0}^{1}L^{\Phi}$.
With this norm, $\mathbb{F}_{0}^{1}L^{\Phi}$ in (\ref{banac1}) is a Banach space admitting 
\begin{equation}\label{banac2}
	F^{\infty}_{0} = \left[  \mathcal{C}^{\infty}_{0}(Q)\otimes I_{nv}^{\Phi}(\Omega) \right] \times \left[ \mathcal{C}^{\infty}_{0}(Q)\otimes I_{\Phi}(\mathcal{C}^{\infty}(\Omega)) \right] \times \left[  \mathcal{C}^{\infty}_{0}(Q)\otimes \mathcal{C}^{\infty}(\Omega)\otimes D_{\#}(Y) \right],	
\end{equation}
as a dense subspace, where $I_{\Phi}$ denotes the canonical mapping of $\mathcal{C}^{\infty}(\Omega)$ into the completion $W^{1}_{\#}L^{\Phi}(\Omega)$ and $D_{\#}(Y) = \left\{ u \in \mathcal{C}^{\infty}_{per}(Y) : \mathcal{M}_{Y}(u) =0 \right\}$. \\
Let now $\mathbf{v} = (v_{0}, v_{1}, v_{2}) \in \mathbb{F}_{0}^{1}L^{\Phi}$, set $\mathbb{D}\mathbf{v} = Dv_{0} + \overline{D}_{\omega}v_{1} + D_{y}v_{2} \in L^{\Phi}(Q\times\Omega\times Y)^{N}$ and define the functional $F$ on $\mathbb{F}_{0}^{1}L^{\Phi}$ by 
\begin{equation*}
	F(\mathbf{v}) = \iiint_{Q\times\Omega\times Y} f(\cdot,\cdot, \mathbb{D}\mathbf{v})\, dxd\mu dy,
\end{equation*}
where the function $f$ here is defined as in Proposition \ref{lem23}. \\
The hypotheses $(H_{1})-(H_{4})$ drive to the following lemma.
\begin{lemma}
	There exists a unique $\mathbf{u} = (u_{0}, u_{1}, u_{2}) \in \mathbb{F}_{0}^{1}L^{\Phi}$ [see (\ref{banac1})] such that 
	\begin{equation}\label{lem28}
		F(\mathbf{u}) = \inf \left\{  F(\mathbf{v})\, : \, \mathbf{v} \in \mathbb{F}_{0}^{1}L^{\Phi} \right\}.
	\end{equation}
\end{lemma}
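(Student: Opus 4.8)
The plan is to obtain \eqref{lem28} by the direct method of the calculus of variations in the space $\mathbb{F}_{0}^{1}L^{\Phi}$, which under our standing hypotheses on $\Phi$ (namely $\Phi,\widetilde{\Phi}\in\Delta_{2}\cap\Delta'$) is, by \eqref{banac1}, a finite product of reflexive Orlicz--Sobolev spaces, hence a reflexive Banach space; moreover the linear map $\mathbb{D}\colon\mathbf{v}\mapsto Dv_{0}+\overline{D}_{\omega}v_{1}+D_{y}v_{2}$ is bounded from $\mathbb{F}_{0}^{1}L^{\Phi}$ into $L^{\Phi}(Q\times\Omega\times Y_{per})^{N}$, so that, by the extension statement in Proposition \ref{lem23}, $F(\mathbf{v})=\iiint_{Q\times\Omega\times Y}f(\cdot,\cdot,\mathbb{D}\mathbf{v})\,dxd\mu dy$ is well defined and finite for every $\mathbf{v}\in\mathbb{F}_{0}^{1}L^{\Phi}$ (finiteness also being immediate from $(H_{3})$). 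Composing the Lipschitz-type bound of Proposition \ref{lem23} with the boundedness of $\mathbb{D}$ shows that $F$ is continuous on $\mathbb{F}_{0}^{1}L^{\Phi}$; since moreover $F$ is convex ($f(x,\omega,y,\cdot)$ being convex and $\mathbb{D}$ linear), it is sequentially weakly lower semicontinuous.

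The crux is coercivity, and for this I would first prove that $\mathbf{v}\mapsto\|\mathbb{D}\mathbf{v}\|_{L^{\Phi}(Q\times\Omega\times Y)^{N}}$ is a norm on $\mathbb{F}_{0}^{1}L^{\Phi}$ equivalent to $\|\cdot\|_{\mathbb{F}_{0}^{1}L^{\Phi}}$. One inequality is the triangle inequality. For the converse I would isolate the three pieces of $\mathbb{D}\mathbf{v}$ by means of two operators that are contractions on the relevant Orlicz spaces by Jensen's inequality, namely the $Y$-mean $\mathcal{M}_{Y}$ and the conditional expectation $\mathbb{E}_{\mathcal{I}}$ onto the $\sigma$-algebra $\mathcal{I}$ of $T$-invariant sets: since $D_{y}v_{2}$ has zero $Y$-mean one gets $\mathcal{M}_{Y}(\mathbb{D}\mathbf{v})=Dv_{0}+\overline{D}_{\omega}v_{1}$ and $D_{y}v_{2}=\mathbb{D}\mathbf{v}-\mathcal{M}_{Y}(\mathbb{D}\mathbf{v})$, while $Dv_{0}$ is $\mathcal{I}$-measurable and $\mathbb{E}_{\mathcal{I}}(\overline{D}_{\omega}v_{1})=0$ (because $\int_{\Omega}\overline{D}_{\omega}v_{1}\cdot\Psi\,d\mu=0$ for every invariant $\Psi\in L^{\widetilde{\Phi}}(\Omega)^{N}$, as one checks on the smooth functions generating $W^{1}_{\#}L^{\Phi}(\Omega)$ using that $T$ leaves $\mu$ invariant), so that $Dv_{0}=\mathbb{E}_{\mathcal{I}}(\mathcal{M}_{Y}(\mathbb{D}\mathbf{v}))$ and $\overline{D}_{\omega}v_{1}=\mathcal{M}_{Y}(\mathbb{D}\mathbf{v})-\mathbb{E}_{\mathcal{I}}(\mathcal{M}_{Y}(\mathbb{D}\mathbf{v}))$. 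Each of $\|Dv_{0}\|$, $\|\overline{D}_{\omega}v_{1}\|$, $\|D_{y}v_{2}\|$ is then bounded by a fixed multiple of $\|\mathbb{D}\mathbf{v}\|_{L^{\Phi}(Q\times\Omega\times Y)^{N}}$, which proves the equivalence and, in particular, the injectivity of $\mathbb{D}$. Coercivity then follows: by the lower bound in $(H_{3})$ and the elementary estimate $\iiint_{Q\times\Omega\times Y}\Phi(|w|)\,dxd\mu dy\ge\|w\|_{L^{\Phi}}-1$, one has $F(\mathbf{v})\ge c_{1}(\|\mathbb{D}\mathbf{v}\|_{L^{\Phi}}-1)$, and the norm equivalence forces $F(\mathbf{v})\to+\infty$ as $\|\mathbf{v}\|_{\mathbb{F}_{0}^{1}L^{\Phi}}\to+\infty$.

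With reflexivity, weak lower semicontinuity and coercivity in hand, existence is routine: a minimizing sequence $(\mathbf{v}^{n})$ for the right-hand side of \eqref{lem28} is bounded in $\mathbb{F}_{0}^{1}L^{\Phi}$, hence, up to a subsequence, $\mathbf{v}^{n}\rightharpoonup\mathbf{u}$ weakly in $\mathbb{F}_{0}^{1}L^{\Phi}$, and $F(\mathbf{u})\le\liminf_{n}F(\mathbf{v}^{n})=\inf_{\mathbb{F}_{0}^{1}L^{\Phi}}F$, so $\mathbf{u}$ attains the infimum. For uniqueness I would invoke strict convexity of $F$: if $\mathbf{v}\ne\mathbf{w}$ then $\mathbb{D}\mathbf{v}\ne\mathbb{D}\mathbf{w}$ on a set of positive $dxd\mu dy$-measure (injectivity of $\mathbb{D}$), and the strict convexity of $f(x,\omega,y,\cdot)$ from $(H_{2})$ integrates to $F(\tfrac{1}{2}(\mathbf{v}+\mathbf{w}))<\tfrac{1}{2}F(\mathbf{v})+\tfrac{1}{2}F(\mathbf{w})$; hence the minimizer is unique.

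I expect the main obstacle to be the norm-equivalence step in the second paragraph, i.e.\ verifying carefully that $\mathcal{M}_{Y}$ and $\mathbb{E}_{\mathcal{I}}$ are well defined and contractive on the Orlicz spaces at hand and that they genuinely separate the three ``directions'' $Dv_{0}$, $\overline{D}_{\omega}v_{1}$, $D_{y}v_{2}$; this is the Orlicz analogue of the orthogonal-decomposition structure already underlying the proof of Theorem \ref{lem29}, and once it is secured the rest of the argument is the standard direct-method machinery together with Proposition \ref{lem23} and Corollary \ref{lem22}.
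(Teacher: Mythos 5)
The paper does not actually prove this lemma: it is stated bare, prefaced only by ``The hypotheses $(H_{1})$--$(H_{4})$ drive to the following lemma,'' with the reader implicitly referred to the analogous existence result in the purely stochastic setting of \cite{franck} and to the argument of Theorem \ref{unitsolution}. Your direct-method proof therefore supplies what the paper omits, and it is correct. The routine parts (reflexivity of $\mathbb{F}_{0}^{1}L^{\Phi}$ under $\Phi,\widetilde{\Phi}\in\Delta_{2}$, continuity of $F$ from Proposition \ref{lem23}, convexity hence weak sequential lower semicontinuity, coercivity from the left-hand bound in $(H_{3})$ together with the modular--norm inequality, and uniqueness from $(H_{2})$) are all in order. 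The one genuinely nontrivial ingredient is exactly the one you isolate: the equivalence of $\mathbf{v}\mapsto\|\mathbb{D}\mathbf{v}\|_{L^{\Phi}(Q\times\Omega\times Y)^{N}}$ with $\|\cdot\|_{\mathbb{F}_{0}^{1}L^{\Phi}}$, which also yields the injectivity of $\mathbb{D}$ needed for uniqueness. Your recovery of the three components via $\mathcal{M}_{Y}$ (using $\mathcal{M}_{Y}(D_{y}v_{2})=0$ by periodicity and zero mean) and the conditional expectation $\mathbb{E}_{\mathcal{I}}$ onto the invariant $\sigma$-algebra (using that $Dv_{0}(x,\cdot)$ is $T$-invariant while $\int_{A}\overline{D}_{\omega}v_{1}\,d\mu=0$ for invariant $A$, checked on $I_{\Phi}(\mathcal{C}^{\infty}(\Omega))$ and extended by density) is sound; Jensen's inequality gives the contractivity of both operators on the Luxemburg norm, since $L^{\Phi}$ embeds in $L^{1}$ over these finite measure spaces. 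This decomposition is precisely the structure already implicit in the proof of Theorem \ref{lem29} (where $D_{y}u_{2}=\mathbf{v}-\mathcal{M}_{Y}(\mathbf{v})$ and $\overline{D}_{\omega}u_{1}=\mathcal{M}_{Y}(\mathbf{v})-Du_{0}$), so your argument is consistent with, and completes, the framework the authors set up.
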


\subsection{Regularization} 

Following \cite[Subsection 4.3]{franck}, we regularize the integrand $f$ in order to get an approximating family of integrands $(f_{n})_{n\in\mathbb{N}^{\ast}}$ having in particular some properties $(H_{1})-(H_{4})$. Precisely, let $\theta_{n} \in \mathcal{C}^{\infty}_{0}(\mathbb{R}^{N})$ with $0\leq \theta_{n}$, $\textup{supp}\theta_{n} \subset \frac{1}{n}\overline{B_{N}}$ (where $\overline{B_{N}}$ denotes the closure of the open unit ball $B_{N}$ in $\mathbb{R}^{N}$) and $\int \theta_{n}(\eta)d\eta = 1$. Setting 
\begin{equation*}
	f_{n}(x,\omega,y, \lambda) = \int \theta_{n}(\eta)f(x,\omega,y, \lambda-\eta)d\eta, \quad (x,\omega,y,\lambda) \in \mathbb{R}^{N}\times\Omega\times \mathbb{R}^{N}\times \mathbb{R}^{N}.
\end{equation*}
The main properties of this new integrand are the following:
\begin{itemize}
	\item[$(H_{1})_{n}$] for all $(x,\lambda) \in \mathbb{R}^{N}\times \mathbb{R}^{N}$ and for almost all $(\omega,y)\in\Omega\times\mathbb{R}^{N}$, $f_{n}(x,\cdot,\cdot, \lambda)$ is measurable and $f_{n}(\cdot,\omega,\cdot,\cdot)$ is continuous. Moreover, there exist a continuous positive function $\varpi : \mathbb{R} \to \mathbb{R}_{+}$ with $\varpi(0)=0$, and a function $a \in L^{1}_{loc}(\mathbb{R}^{N}_{y})$ such that 
	\begin{equation*}
		|f_{n}(x,\omega,y,\lambda) - f_{n}(x',\omega,y,\lambda)| \leq \varpi(|x - x'|)[a(y) + f_{n}(x,\omega,y,\lambda)],
	\end{equation*}
	for all $x, x' \in \mathbb{R}^{N}$, $\lambda \in \mathbb{R}^{N}$ and for $d\mu\times dy$-almost all $(\omega,y) \in \Omega\times\mathbb{R}^{N}$ ;
	\item[$(H_{2})_{n}$] $f_{n}(x,\omega,y,\cdot)$ is strictly convex for almost all $\omega \in \Omega$ and for all $x,y \in \mathbb{R}^{N}$  ;
	\item[$(H_{3})_{n}$] There is constant $c_{5} > 0$ such that 
	\begin{equation*}
		f_{n}(x,\omega,y,\lambda) \leq c_{5}(1+ \Phi(|\lambda|)),
	\end{equation*}
	for all $x,y,\lambda \in \mathbb{R}^{N}$ and for almost all $\omega \in \Omega$ ;
	\item[$(H_{4})_{n}$] $f_{n}(x,\omega,\cdot, \lambda)$ is periodic for all $x,\lambda \in \mathbb{R}^{N}$ and for almost all $\omega \in \Omega$;
	\item[$(H_{5})_{n}$] $\dfrac{\partial f_{n}}{\partial \lambda}(x,\omega,y,\lambda)$ exists for all $x,y,\lambda \in \mathbb{R}^{N}$ and for almost all $\omega\in \Omega$, and there exists a constant $c_{6}=c_{6}(n) >0$ such that 
	\begin{equation*}
		\left| \dfrac{\partial f_{n}}{\partial \lambda}(x,\omega,y,\lambda) \right| \leq c_{6} (1+\phi(|\lambda|)).
	\end{equation*}
\end{itemize}
That being the case, we obtain the results in Proposition \ref{lem23} and in Corollary \ref{lem24}, where $f$ is replaced by $f_{n}$, and arguing as in the proof of \cite[Lemma 20]{franck} we have the following lemma.
\begin{lemma}
	For every $\mathbf{v} \in \left[L^{\Phi}(Q\times\Omega\times Y_{per})\right]^{N}$, as $n\to \infty$, one has
	\begin{equation*}
		f_{n}(\cdot,\cdot,\mathbf{v}) \rightarrow f(\cdot,\cdot,\mathbf{v}) \quad \textup{in} \; L^{1}(Q\times\Omega\times Y).
	\end{equation*}
\end{lemma}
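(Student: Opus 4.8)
The plan is to prove the convergence first for smooth $\mathbf{v}$ via dominated convergence, and then to pass to a general $\mathbf{v}\in[L^{\Phi}(Q\times\Omega\times Y_{per})]^{N}$ by density, exploiting the Lipschitz-type estimate of Proposition \ref{lem23} with a constant that is \emph{uniform in $n$}.

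\textbf{Pointwise convergence and uniform domination.} First I would fix $\lambda\in\mathbb{R}^{N}$ and $(x,\omega,y)$ in the full-measure set where the Carath\'eodory and growth hypotheses hold. Since $\theta_{n}\ge 0$, $\int\theta_{n}=1$ and $\operatorname{supp}\theta_{n}\subset\frac1n\overline{B_{N}}$, while $f(x,\omega,y,\cdot)$ is continuous by Lemma \ref{lem9} (whose hypotheses (i)--(iii) follow from $(H_{1})$--$(H_{3})$), one gets
\begin{equation*}
	f_{n}(x,\omega,y,\lambda)=\int\theta_{n}(\eta)f(x,\omega,y,\lambda-\eta)\,d\eta\longrightarrow f(x,\omega,y,\lambda)\qquad\text{as }n\to\infty.
\end{equation*}
Moreover, for $|\eta|\le 1$ the $\Delta_{2}$-condition yields $\Phi(|\lambda-\eta|)\le\Phi(|\lambda|+1)\le C_{\Phi}(1+\Phi(|\lambda|))$ with $C_{\Phi}$ depending only on $\Phi$, so $(H_{3})$ gives $0\le f_{n}(x,\omega,y,\lambda)\le c_{2}C_{\Phi}(1+\Phi(|\lambda|))$ for \emph{every} $n$; in particular the constant $c_{5}$ in $(H_{3})_{n}$ can be taken independent of $n$.

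\textbf{Smooth case.} Let $\mathbf{v}\in[\mathcal{C}^{\infty}(\Omega)\otimes\mathcal{C}(\overline{Q};\mathcal{C}_{per}(Y))]^{N}$. Then $\mathbf{v}$ is bounded, so $(x,\omega,y)\mapsto 1+\Phi(|\mathbf{v}(x,\omega,y)|)$ lies in $L^{\infty}(Q\times\Omega\times Y)\subset L^{1}(Q\times\Omega\times Y)$. By the previous step $f_{n}(\cdot,\cdot,\mathbf{v})\to f(\cdot,\cdot,\mathbf{v})$ pointwise a.e. with a fixed $L^{1}$-dominating function, so Lebesgue's theorem gives $f_{n}(\cdot,\cdot,\mathbf{v})\to f(\cdot,\cdot,\mathbf{v})$ in $L^{1}(Q\times\Omega\times Y)$. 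Next, since each $f_{n}$ satisfies $(H_{1})_{n}$--$(H_{4})_{n}$ with a growth constant $c_{5}$ independent of $n$, Proposition \ref{lem23} applies to $f_{n}$ (and to $f$) with one and the same constant $c=c(c_{5},Q,\Omega,\Phi)$. Given $\mathbf{v}\in[L^{\Phi}(Q\times\Omega\times Y_{per})]^{N}$ and $\delta>0$, I would pick $\mathbf{w}$ in the dense subspace $[\mathcal{C}^{\infty}(\Omega)\otimes\mathcal{C}(\overline{Q};\mathcal{C}_{per}(Y))]^{N}$ with $\|\mathbf{v}-\mathbf{w}\|_{[L^{\Phi}(Q\times\Omega\times Y)]^{N}}$ so small that $1+|\mathbf{v}|+|\mathbf{w}|$ stays bounded in $L^{\Phi}$ and hence $\phi(1+|\mathbf{v}|+|\mathbf{w}|)$ stays bounded in $L^{\widetilde{\Phi}}(Q\times\Omega\times Y)$ (a standard consequence of $\Phi\in\Delta_{2}$ via $\widetilde{\Phi}(\phi(t))\le\Phi(2t)$), so that Proposition \ref{lem23} gives
\begin{equation*}
	\|f_{n}(\cdot,\cdot,\mathbf{v})-f_{n}(\cdot,\cdot,\mathbf{w})\|_{L^{1}(Q\times\Omega\times Y)}+\|f(\cdot,\cdot,\mathbf{w})-f(\cdot,\cdot,\mathbf{v})\|_{L^{1}(Q\times\Omega\times Y)}<\tfrac{2\delta}{3}
\end{equation*}
uniformly in $n$. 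By the smooth case, $\|f_{n}(\cdot,\cdot,\mathbf{w})-f(\cdot,\cdot,\mathbf{w})\|_{L^{1}(Q\times\Omega\times Y)}<\delta/3$ for $n$ large, and the triangle inequality then yields $\limsup_{n}\|f_{n}(\cdot,\cdot,\mathbf{v})-f(\cdot,\cdot,\mathbf{v})\|_{L^{1}(Q\times\Omega\times Y)}\le\delta$; since $\delta>0$ is arbitrary, the lemma follows.

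\textbf{Main obstacle.} The only genuinely delicate point will be the uniformity in $n$: one must verify that mollification does not degrade the growth constant, i.e. that $c_{5}$ in $(H_{3})_{n}$ can be chosen independent of $n$ (this is exactly the $\Delta_{2}$-estimate $\Phi(|\lambda|+1)\le C_{\Phi}(1+\Phi(|\lambda|))$ from the first step), and correspondingly that the constant in Proposition \ref{lem23}'s estimate is $n$-independent, so that the two outer terms in the triangle inequality above can be made small simultaneously for all $n$. Everything else is a routine dominated-convergence-plus-density argument, carried out as in \cite[Lemma 20]{franck}.
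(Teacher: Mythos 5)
Your argument is correct: the pointwise mollification limit, the $n$-uniform growth bound $f_{n}(x,\omega,y,\lambda)\le c_{2}C_{\Phi}(1+\Phi(|\lambda|))$ obtained from $(H_{3})$ and the $\Delta_{2}$-condition, and the consequent $n$-uniform constant in the Lipschitz-type estimate of Proposition \ref{lem23} applied to $f_{n}$ make the smooth-case-plus-density scheme close properly; you correctly identified the $n$-uniformity of the constants as the one point that must be checked. The paper itself offers no proof, deferring entirely to \cite[Lemma 20]{franck}, so a line-by-line comparison is not possible; but it is worth noting that a more direct, quantitative route is available and is the one suggested by the $C'/n$ error term in Proposition \ref{conseq1}. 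Namely, Lemma \ref{lem9} (whose hypotheses follow from $(H_{1})$--$(H_{3})$) gives for $|\eta|\le 1/n\le 1$ the bound $|f(x,\omega,y,\lambda-\eta)-f(x,\omega,y,\lambda)|\le c_{0}'\,\frac{1+\Phi(2(1+|\lambda|+|\lambda-\eta|))}{1+|\lambda|+|\lambda-\eta|}\,|\eta|\le \frac{C}{n}\bigl(1+\Phi(|\lambda|)\bigr)$ after one application of $\Delta_{2}$; averaging against $\theta_{n}$ yields the pointwise estimate $|f_{n}(\cdot,\cdot,\cdot,\mathbf{v})-f(\cdot,\cdot,\cdot,\mathbf{v})|\le \frac{C}{n}\bigl(1+\Phi(|\mathbf{v}|)\bigr)$ for \emph{every} $\mathbf{v}\in[L^{\Phi}(Q\times\Omega\times Y_{per})]^{N}$ at once, and since $\Phi\in\Delta_{2}$ guarantees $\Phi(|\mathbf{v}|)\in L^{1}(Q\times\Omega\times Y)$, the $L^{1}$ convergence follows immediately with the explicit rate $O(1/n)$ and no density step. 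Your approach buys nothing extra here and costs the verification of uniform constants; the pointwise estimate buys, in addition to the lemma, the quantitative rate that Proposition \ref{conseq1} actually needs. Both proofs are valid.
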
 
We are now ready to give one of the most important results of this section.
\begin{proposition}\label{conseq1}
	Let $(\mathbf{v}_{\epsilon})_{\epsilon}$ be a sequence in $ \left[L^{\Phi}(Q\times\Omega)\right]^{N}$ which weakly 2s-converges (in each component) to $\mathbf{v} \in \left[L^{\Phi}(Q\times\Omega\times Y_{per})\right]^{N}$. Then, for any integer $n\geq 1$,  we have that there exists a constant $C'$ such that
	\begin{equation*}
		\iiint_{Q\times\Omega\times Y} f_{n}(\cdot,\cdot, \mathbf{v}) dxd\mu dy  - \dfrac{C'}{n} \leq \lim_{\epsilon \to 0} \iint_{Q\times\Omega}  f\left(x, T\left(\frac{x}{\epsilon}\right)\omega,\frac{x}{\epsilon^{2}}, \mathbf{v}_{\epsilon}(x,\omega)\right)  dxd\mu.
	\end{equation*}
\end{proposition}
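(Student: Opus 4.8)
The plan is to deduce the inequality from two ingredients. First, a quantitative comparison showing that replacing $f$ by its regularization $f_{n}$ in the left-hand side costs at most a term of order $1/n$, \emph{uniformly in} $\epsilon$. Second, the lower semicontinuity, with respect to the weak two-scale convergence, of the convex integral functional $\mathbf{w}\mapsto \iiint_{Q\times\Omega\times Y} f_{n}(x,\omega,y,\mathbf{w})\,dxd\mu dy$ on $\left[L^{\Phi}(Q\times\Omega\times Y_{per})\right]^{N}$. Chaining these two facts gives
\[
\liminf_{\epsilon\to0}\iint_{Q\times\Omega} f\!\left(x,T(\tfrac{x}{\epsilon})\omega,\tfrac{x}{\epsilon^{2}},\mathbf{v}_{\epsilon}\right)dxd\mu \;\ge\; \liminf_{\epsilon\to0}\iint_{Q\times\Omega} f_{n}^{\epsilon}(\cdot,\cdot,\mathbf{v}_{\epsilon})\,dxd\mu-\tfrac{C'}{n} \;\ge\; \iiint_{Q\times\Omega\times Y} f_{n}(\cdot,\cdot,\mathbf{v})\,dxd\mu dy-\tfrac{C'}{n},
\]
which is exactly the claim (the $\lim$ in the statement being read, as usual in this context, as a $\liminf$, or as the limit along a subsequence along which it exists).

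For the first ingredient I would argue as follows. Since $f$ satisfies $(H_{1})$--$(H_{4})$ it fulfils the hypotheses of Lemma \ref{lem9} with $g=f$ and $c_{0}=c_{2}$, so for $|\eta|\le 1/n$ one has $|f(x,\omega,y,\lambda-\eta)-f(x,\omega,y,\lambda)|\le \tfrac{2Nc_{2}}{n}\bigl(1+\Phi(4+4|\lambda|)\bigr)$; invoking $\Phi\in\Delta_{2}$ to absorb $\Phi(4+4t)\le C(1+\Phi(t))$ and integrating against $\theta_{n}$ yields $|f_{n}(x,\omega,y,\lambda)-f(x,\omega,y,\lambda)|\le\tfrac{C}{n}(1+\Phi(|\lambda|))$. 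Taking $\lambda=\mathbf{v}_{\epsilon}(x,\omega)$, integrating over $Q\times\Omega$, and using that $(\mathbf{v}_{\epsilon})$ is bounded in $\left[L^{\Phi}(Q\times\Omega)\right]^{N}$ (a requirement built into the notion of weak two-scale convergence) together with $\Phi\in\Delta_{2}$ --- which promotes the Luxemburg bound to a uniform bound on $\iint_{Q\times\Omega}\Phi(|\mathbf{v}_{\epsilon}|)\,dxd\mu$ --- produces the uniform constant $C'$. For the second ingredient I would fix a smooth test field $\mathbf{w}\in\left[\mathcal{C}^{\infty}_{0}(Q)\otimes\mathcal{C}^{\infty}(\Omega)\otimes\mathcal{C}^{\infty}_{per}(Y)\right]^{N}$, set $\mathbf{w}^{\epsilon}(x,\omega)=\mathbf{w}(x,T(\tfrac{x}{\epsilon})\omega,\tfrac{x}{\epsilon^{2}})$, and exploit the differentiability $(H_{5})_{n}$ and convexity $(H_{2})_{n}$ of $f_{n}$ to write the subgradient inequality $f_{n}^{\epsilon}(\cdot,\cdot,\mathbf{v}_{\epsilon})\ge f_{n}^{\epsilon}(\cdot,\cdot,\mathbf{w}^{\epsilon})+\tfrac{\partial f_{n}}{\partial\lambda}(\cdot,\cdot,\mathbf{w}^{\epsilon})\cdot(\mathbf{v}_{\epsilon}-\mathbf{w}^{\epsilon})$ a.e.\ on $Q\times\Omega$. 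Integrating over $Q\times\Omega$ and letting $\epsilon\to0$: the first right-hand term converges to $\iiint_{Q\times\Omega\times Y} f_{n}(x,\omega,y,\mathbf{w})\,dxd\mu dy$ by the $f_{n}$-analogue of Proposition \ref{lem23}; the two linear terms pass to the limit because $(x,\omega,y)\mapsto \tfrac{\partial f_{n}}{\partial\lambda}(x,\omega,y,\mathbf{w}(x,\omega,y))$ lies in $\mathcal{C}(\overline{Q};L^{\infty}(\Omega;L^{\infty}_{per}(Y)))^{N}$ --- here both the smoothness of $\mathbf{w}$ and the growth bound $(H_{5})_{n}$ are used --- hence is an admissible test function in the sense of Remark \ref{lem16}$(R_{1})$, so that the weak two-scale convergences $\mathbf{v}_{\epsilon}\to\mathbf{v}$ and $\mathbf{w}^{\epsilon}\to\mathbf{w}$ give $\liminf_{\epsilon}\iint f_{n}^{\epsilon}(\cdot,\cdot,\mathbf{v}_{\epsilon})\ge\iiint_{Q\times\Omega\times Y}\bigl[f_{n}(\cdot,\cdot,\mathbf{w})+\tfrac{\partial f_{n}}{\partial\lambda}(\cdot,\cdot,\mathbf{w})\cdot(\mathbf{v}-\mathbf{w})\bigr]dxd\mu dy$. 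Finally, letting $\mathbf{w}\to\mathbf{v}$ in $\left[L^{\Phi}(Q\times\Omega\times Y_{per})\right]^{N}$ (density of the smooth fields), using the continuity estimate of the $f_{n}$-analogue of Proposition \ref{lem23} for the first summand, and $(H_{5})_{n}$ together with $\Phi,\widetilde{\Phi}\in\Delta_{2}$ to control $\tfrac{\partial f_{n}}{\partial\lambda}(\cdot,\cdot,\mathbf{w})$ in $L^{\widetilde{\Phi}}$ for the linear one, collapses the right-hand side to $\iiint_{Q\times\Omega\times Y} f_{n}(\cdot,\cdot,\mathbf{v})\,dxd\mu dy$.

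I expect the main obstacle to be the passage to the limit in the linear term $\iint_{Q\times\Omega}\tfrac{\partial f_{n}}{\partial\lambda}(\cdot,\cdot,\mathbf{w}^{\epsilon})\cdot\mathbf{v}_{\epsilon}\,dxd\mu$ of the second ingredient: it is precisely in order to have $\tfrac{\partial f_{n}}{\partial\lambda}(\cdot,\cdot,\mathbf{w})$ continuous in $x$, essentially bounded in $(\omega,y)$ and periodic in $y$ --- i.e.\ an admissible two-scale test function --- that the regularization yielding $(H_{5})_{n}$ was introduced, and the subsequent optimization over $\mathbf{w}$ rests on the Orlicz duality and continuity facts packaged in Proposition \ref{lem23} and its $f_{n}$-version. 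The comparison step is technically lighter but still hinges on the $\Delta_{2}$-conversions between the Luxemburg norm and the modular, which is where the boundedness of $(\mathbf{v}_{\epsilon})$ in $L^{\Phi}(Q\times\Omega)$ enters crucially.
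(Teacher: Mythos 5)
Your argument is correct and is essentially the paper's own proof: the paper simply defers to \cite[Proposition 21]{franck}, whose argument is exactly the one you spell out --- a uniform $O(1/n)$ comparison between $f$ and $f_{n}$ via the continuity estimate of Lemma \ref{lem9}, followed by the convexity/subgradient inequality for the differentiable regularization $f_{n}$ tested against smooth fields (your $\mathbf{w}$ playing the role of the paper's approximating sequence $\mathbf{v}_{l}$), with the limit passages justified by the $f_{n}$-analogue of Proposition \ref{lem23} and Remark \ref{lem16}($R_{1}$). No substantive gap.
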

\begin{proof}
	Arguing as in the proof of \cite[Proposition 21]{franck} with the sequence \\ $(\mathbf{v}_{l})_{l} \in \left[ \mathcal{C}^{\infty}_{0}(Q)\otimes\mathcal{C}^{\infty}(\Omega)\otimes\mathcal{C}^{\infty}_{per}(Y) \right]^{N}$ such that $\mathbf{v}_{l} \rightarrow \mathbf{v}$ in  $\left[L^{\Phi}(Q\times\Omega\times Y_{per})\right]^{N}$ as $l\to \infty$. One get the result.
\end{proof} 

Letting $n\to\infty$ in Proposition \ref{conseq1}, and replacing $\mathbf{v}_{\epsilon}$ by $Du_{\epsilon}$, with $Du_{\epsilon}$ stochastically weakly two-scale converges (componentwise) to $\mathbb{D}\mathbf{u} = Du_{0} + \overline{D}_{\omega}u_{1} + D_{y}u_{2} $ in $\left[ L^{\Phi}(Q\times\Omega\times Y_{per}) \right]^{N}$, one obtains the following result.
\begin{corollary}\label{lem32}
	Let $(u_{\epsilon})_{\epsilon\in E}$ be a sequence in $W^{1}L^{\Phi}_{D_{x}}(Q; L^{\Phi}(\Omega))$. Assume that $(Du_{\epsilon})_{\epsilon\in E}$ stochastically weakly two-scale converges componentwise to $\mathbb{D}\mathbf{u} = Du_{0} + \overline{D}_{\omega}u_{1} + D_{y}u_{2} \in \left[ L^{\Phi}(Q\times\Omega\times Y_{per}) \right]^{N}$, where $\mathbf{u} = (u_{0}, u_{1}, u_{2}) \in \mathbb{F}^{1}_{0}L^{\Phi}$. Then, 
	\begin{eqnarray*}
		\iiint_{Q\times\Omega\times Y} f(x,\omega,y, \mathbb{D}\mathbf{u}(x,\omega,y)) dxd\mu dy \\
		\leq \lim_{\epsilon \to 0} \iint_{Q\times\Omega}  f\left(x, T\left(\frac{x}{\epsilon}\right)\omega,\frac{x}{\epsilon^{2}}, Du_{\epsilon}(x,\omega)\right)  dxd\mu.
	\end{eqnarray*}
\end{corollary}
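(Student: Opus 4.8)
The plan is to obtain the inequality as a one-step limit passage in Proposition~\ref{conseq1}, using the regularization lemma to replace $f_n$ by $f$. First I would apply Proposition~\ref{conseq1} with the choice $\mathbf{v}_\epsilon = Du_\epsilon$. By hypothesis $(Du_\epsilon)_{\epsilon\in E}$ lies in $[L^\Phi(Q\times\Omega)]^N$ and stochastically weakly two-scale converges componentwise to $\mathbf{v}:=\mathbb{D}\mathbf{u}=Du_0+\overline{D}_\omega u_1+D_y u_2$. Since $\mathbf{u}=(u_0,u_1,u_2)\in\mathbb{F}_0^1L^\Phi$, the terms $Du_0$ and $\overline{D}_\omega u_1$ are independent of $y$ and belong to $L^\Phi(Q\times\Omega)^N\subset L^\Phi(Q\times\Omega\times Y_{per})^N$, while $D_yu_2\in L^\Phi(Q\times\Omega\times Y_{per})^N$ by definition of $L^\Phi_{D_y}(Q\times\Omega;W^1_\# L^\Phi_{per}(Y))$; hence $\mathbf{v}\in[L^\Phi(Q\times\Omega\times Y_{per})]^N$ and Proposition~\ref{conseq1} applies. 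It provides, for every integer $n\geq 1$, a constant $C'$ (independent of $n$) with
\[
\iiint_{Q\times\Omega\times Y} f_n(\cdot,\cdot,\mathbb{D}\mathbf{u})\,dxd\mu dy-\frac{C'}{n}\ \leq\ \lim_{\epsilon\to 0}\iint_{Q\times\Omega} f\!\left(x,T\!\left(\tfrac{x}{\epsilon}\right)\omega,\tfrac{x}{\epsilon^2},Du_\epsilon(x,\omega)\right)dxd\mu .
\]

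Next I would let $n\to\infty$. The right-hand side does not depend on $n$, so it suffices to handle the left-hand side: the regularization lemma gives $f_n(\cdot,\cdot,\mathbb{D}\mathbf{u})\to f(\cdot,\cdot,\mathbb{D}\mathbf{u})$ in $L^1(Q\times\Omega\times Y)$ (again using $\mathbb{D}\mathbf{u}\in[L^\Phi(Q\times\Omega\times Y_{per})]^N$), whence $\iiint_{Q\times\Omega\times Y}f_n(\cdot,\cdot,\mathbb{D}\mathbf{u})\,dxd\mu dy\to\iiint_{Q\times\Omega\times Y}f(\cdot,\cdot,\mathbb{D}\mathbf{u})\,dxd\mu dy$, while $C'/n\to 0$. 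Passing to the limit in the displayed inequality yields exactly the asserted estimate.

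The argument has no genuine analytic obstacle — everything hard is already packaged in Proposition~\ref{conseq1} and the regularization lemma — but two points deserve care. The first is the membership $\mathbb{D}\mathbf{u}\in[L^\Phi(Q\times\Omega\times Y_{per})]^N$, needed to invoke both ingredients; this is precisely why one works in $\mathbb{F}_0^1L^\Phi$ as defined in (\ref{banac1}), together with the embedding $L^\Phi(Q\times\Omega\times Y_{per})^N\subset L^1(Q\times\Omega;L^\Phi_{per}(Y))$. The second is the uniformity of $C'$ with respect to $n$; should one only have $C'=C'(n)$, the conclusion still follows provided $C'(n)/n\to 0$, which one reads off the explicit form of $C'$ in the proof of Proposition~\ref{conseq1} (inherited from \cite[Proposition~21]{franck}). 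Finally, if the true limit on the right-hand side is not known to exist a priori, the same chain of inequalities holds verbatim with $\liminf_{\epsilon\to0}$ in place of $\lim_{\epsilon\to0}$, which is the form actually used afterwards.
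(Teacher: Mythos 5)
Your proposal is correct and is essentially identical to the paper's own argument, which derives the corollary in one line by taking $\mathbf{v}_{\epsilon}=Du_{\epsilon}$ in Proposition \ref{conseq1} and letting $n\to\infty$ with the help of the regularization lemma. Your additional remarks on the membership $\mathbb{D}\mathbf{u}\in\left[L^{\Phi}(Q\times\Omega\times Y_{per})\right]^{N}$ and on the behaviour of $C'$ as $n\to\infty$ are sensible precautions but do not change the route.
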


\subsection{Proof of main homogenization result}

Our objective in this section is to prove the convergence result stated in Theorem \ref{lem37}. For the reader's convenience, we recall it here.

\noindent \textbf{Theorem \ref{lem37}.}(Main homogenization result) \\
\textit{	For each $\epsilon > 0$, let $(u_{\epsilon})_{\epsilon\in E} \in W^{1}_{0}L^{\Phi}_{D_{x}}(Q; L^{\Phi}(\Omega))$ be the unique solution of (\ref{ras2}). Then, as $\epsilon \to 0$, }
\begin{equation}\label{lem30}
	u_{\epsilon} \rightarrow u_{0} \quad stoch. \;\textup{in} \; L^{\Phi}(Q\times\Omega)-weak \,2s,  
\end{equation}
\textit{and }	 
\begin{equation}\label{lem31}
	Du_{\epsilon} \rightarrow Du_{0} + \overline{D}_{\omega}u_{1} + D_{y}u_{2} \quad stoch. \;\textup{in} \; L^{\Phi}(Q\times\Omega)^{N}-weak \,2s,  
\end{equation}
\textit{	where $\mathbf{u} = (u_{0}, u_{1}, u_{2}) \in \mathbb{F}_{0}^{1}L^{\Phi}$ [see (\ref{banac1})] is the unique solution to the minimization problem (\ref{lem28}). }

\begin{proof}
	In view of the growth conditions in $(H_{3})$, the sequence $(u_{\epsilon})_{\epsilon>0}$ is bounded in $W^{1}_{0}L^{\Phi}_{D_{x}}(Q; L^{\Phi}(\Omega))$ and so the sequence $(f^{\epsilon}(\cdot,\cdot, Du_{\epsilon}))_{\epsilon>0}$ is bounded in $L^{1}(Q\times\Omega)$. Thus, given an arbitrary fundamental sequence $E$, we get by Theorem \ref{lem29} the existence of a subsequence $E'$ from $E$ and a triplet $\mathbf{u} = (u_{0}, u_{1}, u_{2}) \in \mathbb{F}_{0}^{1}L^{\Phi}$ such that (\ref{lem30})-(\ref{lem31}) hold when $E' \ni \epsilon \to 0$. The sequence $(F_{\epsilon}(u_{\epsilon}))_{\epsilon>0}$ consisting of real numbers being bounded, since $(u_{\epsilon})_{\epsilon>0}$ is bounded in $W^{1}_{0}L^{\Phi}_{D_{x}}(Q; L^{\Phi}(\Omega))$, there exists a subsequence from $E'$ not relabeled such that $\lim_{E' \ni\epsilon \to 0} F_{\epsilon}(u_{\epsilon})$ exists. 
	We still have to verify that $\mathbf{u} = (u_{0}, u_{1}, u_{2})$ solves (\ref{lem28}). In fact, if $\mathbf{u}$ solves this problem, then thanks to the uniqueness of the solution of (\ref{lem28}), the whole sequence $(u_{\epsilon})_{\epsilon>0}$ will verify (\ref{lem30}) and (\ref{lem31}) when $\epsilon \to 0$. Thus, our only concern here is to check that $\mathbf{u}$ solves problem (\ref{lem28}). To this end, in view of Corollary \ref{lem32}, we have 
	\begin{equation}\label{lem34}
		\begin{array}{l}
			\displaystyle	\iiint_{Q\times\Omega\times Y} f(x,\omega,y, \mathbb{D}\mathbf{u}(x,\omega,y)) dxd\mu dy \\
			\leq \lim_{E'\ni\epsilon \to 0} \displaystyle \iint_{Q\times\Omega}  f\left(x, T\left(\frac{x}{\epsilon}\right)\omega,\frac{x}{\epsilon^{2}}, Du_{\epsilon}(x,\omega)\right)  dxd\mu.
		\end{array}
	\end{equation}
	On the other hand, let us establish an upper bound for 
	\begin{equation*}
		\iint_{Q\times\Omega}  f\left(x, T\left(\frac{x}{\epsilon}\right)\omega,\frac{x}{\epsilon^{2}}, Du_{\epsilon}(x,\omega)\right)  dxd\mu.
	\end{equation*}
	To do that, let $\phi = \left(\psi_{0}, I_{\Phi}(\psi_{1}), \psi_{2}\right) \in F^{\infty}_{0}$ [see (\ref{banac2})] with $\psi_{0} \in \mathcal{C}_{0}^{\infty}(Q)\otimes I_{nv}^{\Phi}(\Omega)$, $\psi_{1} \in \mathcal{C}_{0}^{\infty}(Q)\otimes \mathcal{C}^{\infty}(\Omega)$ and $\psi_{2} \in \mathcal{C}_{0}^{\infty}(Q)\otimes \mathcal{C}^{\infty}(\Omega)\times \mathcal{D}_{\#}(Y)$. 	Define $\phi_{\epsilon}$ as in Corollary \ref{lem24}. Since $u_{\epsilon}$ is the minimizer, one has 
	\begin{eqnarray*}
		\iint_{Q\times\Omega}  f\left(x, T\left(\frac{x}{\epsilon}\right)\omega,\frac{x}{\epsilon^{2}}, Du_{\epsilon}(x,\omega)\right)  dxd\mu  \\
		\leq \iint_{Q\times\Omega}  f\left(x, T\left(\frac{x}{\epsilon}\right)\omega,\frac{x}{\epsilon^{2}}, D\phi_{\epsilon}(x,\omega)\right)  dxd\mu.
	\end{eqnarray*}
	Thus, using Corollary \ref{lem24} we get 
	\begin{equation*}
		\begin{array}{l}
			\displaystyle	\lim_{E'\ni\epsilon \to 0}  \displaystyle \iint_{Q\times\Omega}  f\left(x, T\left(\frac{x}{\epsilon}\right)\omega,\frac{x}{\epsilon^{2}}, Du_{\epsilon}(x,\omega)\right)  dxd\mu \\
			\leq  \displaystyle \iiint_{Q\times\Omega\times Y}  f(\cdot,\cdot, D\psi_{0} + D_{\omega}\psi_{1} + D_{y}\psi_{2})  dxd\mu dy,
		\end{array}
	\end{equation*}
	for any $\phi \in F^{\infty}_{0}$, and by density, for all $\phi \in \mathbb{F}_{0}^{1}L^{\Phi}$. From which we get 
	\begin{eqnarray}\label{lem33}
		\lim_{E'\ni\epsilon \to 0}  \iint_{Q\times\Omega}  f\left(x, T\left(\frac{x}{\epsilon}\right)\omega,\frac{x}{\epsilon^{2}}, Du_{\epsilon}(x,\omega)\right)  dxd\mu  \nonumber \\ \leq  \inf_{\mathbf{v} \in \mathbb{F}_{0}^{1}L^{\Phi}} \iiint_{Q\times\Omega\times Y}  f(\cdot,\cdot, \mathbb{D}\mathbf{v})  dxd\mu dy.
	\end{eqnarray}
	Inequalities (\ref{lem34}) and (\ref{lem33}) yield 
	\begin{equation*}
		\iiint_{Q\times\Omega \times Y} f(x,\omega,y, \mathbb{D}\mathbf{u}) dxd\mu dy = \inf_{\mathbf{v} \in \mathbb{F}_{0}^{1}L^{\Phi}} \iiint_{Q\times\Omega\times Y}  f(\cdot,\cdot, \mathbb{D}\mathbf{v})  dxd\mu dy
	\end{equation*}
	i.e., (\ref{lem28}). The proof is complete.
\end{proof}


\section*{Acknowledgment}
The authors would like to thank the anonymous referee for his/her pertinent remarks, comments and suggestions.

{\small
	
	\bibliographystyle{abbrv}
	\bibliography{stoc_per_biblio1}

\begin{thebibliography}{10}

\bibitem{abda}
Y.~Abddaimi, G.~Michaille, and C.~Licht.
\newblock Stochastic homogenization for an integral functional of a quasiconvex
  function with linear growth.
\newblock {\em Asymptotic Analysis}, \textbf{15}(2):183--202, 1997.
  Zbl:0912.49013, doi:10.3233/ASY-1997-15203.

\bibitem{adams}
R.~A. Adams.
\newblock On the {Orlicz}-{Sobolev} imbedding theorem.
\newblock {\em J. Funct. Anal.}, \textbf{24}:241--257, 1977. Zbl:0344.46077,
  doi:10.1016/0022-1236(77)90055-6.

\bibitem{allair1}
G.~Allaire.
\newblock Homogenization and two-scale convergence.
\newblock {\em SIAM Journal on Mathematical Analysis},
  \textbf{23}(6):1482--1518, 1992. Zbl:0770.35005, doi:10.1137/0523084.

\bibitem{allair3}
G.~Allaire and M.~Briane.
\newblock Multiscale convergence and reiterated homogenisation.
\newblock {\em Proc. R. Soc. Edinb., Sect. A, Math.}, \textbf{126}(2):297--342,
  1996. Zbl:0866.35017, doi:10.1017/S0308210500022757.

\bibitem{andre}
K.~T. Andrews and S.~Wright.
\newblock Stochastic homogenization of elliptic boundary-value problems with
  {$L^{p}$}-data.
\newblock {\em Asymptotic Analysis}, \textbf{17}(3):165--184, 1998.
  Zbl:0954.35051, doi:10.3233/ASY-1998-300.

\bibitem{bia}
M.~Ba{\'{\i}}a and I.~Fonseca.
\newblock The limit behavior of a family of variational multiscale problems.
\newblock {\em Indiana Univ. Math. J.}, \textbf{56}(1):1--50, 2007.
  Zbl:1114.35008, doi:10.1512/iumj.2007.56.2869.

\bibitem{blan}
X.~Blanc, C.~Le~Bris, and P.-L. Lions.
\newblock Stochastic homogenization and random lattices.
\newblock {\em Journal de math{\'e}matiques pures et appliqu{\'e}es},
  \textbf{88}(1):34--63, 2007. Zbl:1129.60055,
  doi:10.1016/j.matpur.2007.04.006.

\bibitem{Bourgeat}
A.~Bourgeat, A.~Mikelic, and S.~Wright.
\newblock Stochastic two-scale convergence in the mean and applications.
\newblock {\em Journal für die reine und angewandte Mathematik},
  \textbf{456}:19--52, 1994. Zbl:0808.60056, doi:10.1515/crll.1994.456.19.

\bibitem{cham}
T.~Champion and L.~De~Pascale.
\newblock Homogenization of {Dirichlet} problems with convex bounded
  constraints on the gradient.
\newblock {\em Z. Anal. Anwend.}, \textbf{22}(3):591--608, 2003.
  Zbl:1071.49011, doi:10.4171/ZAA/1164.

\bibitem{dal}
G.~Dal~Maso and L.~Modica.
\newblock Nonlinear stochastic homogenization.
\newblock {\em Ann. Mat. Pura Appl. (4)}, \textbf{144}:347--389, 1986.
  Zbl:0607.49010, doi:10.1007/BF01760826.

\bibitem{franck}
J.~Dongho, J.~Fotso~Tachago, and F.~Tchinda~Takougoum.
\newblock Stochastic two-scale convergence in the mean in orlicz-sobolev's
  spaces and applications to the homogenization of an integral functional.
\newblock {\em Asymptotic Analysis}, \textbf{142}(1):291--320, 2025.
  doi:10.1177/09217134241309718.

\bibitem{tacha5}
J.~Fotso~Tachago, G.~Gargiulo, H.~Nnang, and E.~Zappale.
\newblock Some convergence results on the periodic unfolding operator in orlicz
  setting.
\newblock In {\em International Conference on Integral Methods in Science and
  Engineering}, pages 361--371. Springer, 2022.

\bibitem{joelf}
J.~Fotso~Tachago and H.~Nnang.
\newblock Stochastic-periodic homogenization of {Maxwell}'s equations with
  linear and periodic conductivity.
\newblock {\em Acta Math. Sin., Engl. Ser.}, \textbf{33}(1):117--152, 2017.
  Zbl:1367.35024, doi:10.1007/s10114-016-5597-x.

\bibitem{ttchin1}
J.~Fotso~Tachago, H.~Nnang, F.~Tchinda, and E.~Zappale.
\newblock (two-scale) {{\({W}^1 {L}^{\Phi}\)}}-gradient {Young} measures and
  homogenization of integral functionals in {Orlicz}-{Sobolev} spaces.
\newblock {\em J. Elliptic Parabol. Equ.}, \textbf{10}(2):1275--1299, 2024.
  Zbl:1552.49011, doi:10.1007/s41808-024-00294-4.

\bibitem{gambi}
B.~Gambin and J.~J. Telega.
\newblock Effective properties of elastic solids with randomly distributed
  microcracks.
\newblock {\em Mech. Res. Commun.}, \textbf{27}(6):697--706, 2000.
  Zbl:1058.74612, doi:10.1016/S0093-6413(00)00143-9.

\bibitem{martin}
M.~Kalousek.
\newblock Homogenization of incompressible generalized {Stokes} flows through a
  porous medium.
\newblock {\em Nonlinear Anal., Theory Methods Appl., Ser. A, Theory Methods},
  \textbf{136}:1--39, 2016. Zbl:1398.35173, doi:10.1016/j.na.2016.01.025.

\bibitem{mignon2}
G.~Mingione and V.~R{\u{a}}dulescu.
\newblock Recent developments in problems with nonstandard growth and
  nonuniform ellipticity.
\newblock {\em J. Math. Anal. Appl.}, 501(\textbf{1}):41, 2021. Zbl:1467.49003,
  doi:10.1016/j.jmaa.2021.125197.
\newblock Id/No 125197.

\bibitem{nguet1}
G.~Nguetseng.
\newblock A general convergence result for a functional related to the theory
  of homogenization.
\newblock {\em SIAM J. Math. Anal.}, \textbf{20}(3):608--623, 1989.
  Zbl:0688.35007, doi:10.1137/0520043.

\bibitem{gabri}
G.~Nguetseng, M.~Sango, and J.~L. Woukeng.
\newblock Reiterated ergodic algebras and applications.
\newblock {\em Commun. Math. Phys.}, \textbf{300}(3):835--876, 2010.
  Zbl:1228.46049, doi:10.1007/s00220-010-1127-3.

\bibitem{sango}
M.~Sango and J.~L. Woukeng.
\newblock Stochastic two-scale convergence of an integral functional.
\newblock {\em Asymptotic Anal.}, \textbf{73}(1-2):97--123, 2011.
  Zbl:1228.60073, doi:10.3233/ASY-2011-1038.

\bibitem{tacha2}
J.~F. Tachago, G.~Gargiulo, H.~Nnang, and E.~Zappale.
\newblock Multiscale homogenization of integral convex functionals in {Orlicz}
  {Sobolev} setting.
\newblock {\em Evol. Equ. Control Theory}, \textbf{10}(2):297--320, 2021.
  Zbl:1479.49024, doi:10.3934/eect.2020067.

\bibitem{tacha1}
J.~F. Tachago and H.~Nnang.
\newblock Two-scale convergence of integral functionals with convex, periodic
  and nonstandard growth integrands.
\newblock {\em Acta Appl. Math.}, \textbf{121}(1):175--196, 2012.
  Zbl:1258.35018, doi:10.1007/s10440-012-9702-6.

\bibitem{tacha3}
J.~F. Tachago, H.~Nnang, and E.~Zappale.
\newblock Reiterated periodic homogenization of integral functionals with
  convex and nonstandard growth integrands.
\newblock {\em Opusc. Math.}, \textbf{41}(1):113--143, 2021. Zbl:1469.35023,
  doi:10.7494/OpMath.2021.41.1.113.

\bibitem{tacha4}
J.~F. Tachago, H.~Nnang, and E.~Zappale.
\newblock Reiterated homogenization of nonlinear degenerate elliptic operators
  with nonstandard growth.
\newblock {\em Differ. Integral Equ.}, \textbf{37}(9-10):717--752, 2024.
  ZbMATH:7893415, doi:10.57262/die037-0910-717.

\bibitem{zand}
V.~Zander.
\newblock Fubini theorems for {Orlicz} spaces of {Lebesgue}-{Bochner}
  measurable functions.
\newblock {\em Proc. Am. Math. Soc.}, \textbf{32}:102--110, 1972.
  Zbl:0256.28006, doi:10.2307/2038315.

\bibitem{jikov}
V.~V. Zhikov, S.~M. Kozlov, and O.~A. Olejnik.
\newblock {\em Homogenization of differential operators and integral
  functionals. {Transl}. from the {Russian} by {G}. {A}. {Yosifian}}.
\newblock Berlin: Springer-Verlag, 1994. Zbl:0838.35001,
  doi:10.1007/978-3-642-84659-5.

\end{thebibliography}
}

\end{document}